\newtheorem{theorem}{Theorem}
\newtheorem{lemma}{Lemma}
\newtheorem{prop}{Proposition}
\newtheorem*{mainthm}{Main Theorem}
\renewcommand{\@seccntformat}[1]{\csname the#1\endcsname.\ }  % add dot visually
\def\numberline#1{\hb@xt@\@tempdima{#1.\hfil}}
\begin{document}

\title[Gauge invariance of Kuperberg invariant]
{On the gauge invariance of the Kuperberg invariant of certain high genus framed 3-manifolds}

\author{Liang Chang}
\address{School of Mathematical Sciences and LPMC\\
	Nankai University \\
	Tianjin, China}
\email{changliang996@nankai.edu.cn}

\author{Yilong Wang}
\address{Beijing Institute of Mathematical Sciences and Applications (BIMSA)\\
	Beijing, China}
\email{wyl@bimsa.cn}

\author{Saifei Zhai}
\address{School of Mathematical Sciences and LPMC\\
	Nankai University \\
	Tianjin, China}
\email{saifeizhai@mail.nankai.edu.cn}

\begin{abstract}
We show that the Kuperberg invariant of the Weeks manifold with any framing is a gauge invariant of finite-dimensional Hopf algebras, which provides the first example of gauge invariants of general finite-dimensional Hopf algebras via hyperbolic 3-manifolds. We also show that the Kuperberg invariant of the 3-torus is gauge invariant, which further supports the idea of systematically producing gauge invariants of Hopf algebras via topological methods proposed in \cite{CNW25}.
\end{abstract}

\maketitle

\section{Introduction}
Gauge invariants of finite-dimensional Hopf algebras are assignments of finite-dimensional Hopf algebras to numbers that are invariant under tensor equivalences of the representation categories of Hopf algebras. Gauge invariants are also called categorical invariants. Many natural invariants of finite-dimensional Hopf algebras, including the dimension, the exponent \cite{EG99}, the quasi-exponent \cite{EG02qexp} (for Hopf algebras over $\mathbb{C}$), and the generalization of Frobenius-Schur indicators studied by Kashina, Montgomery and Ng \cite{KMN12} and Shimizu \cite{Shi15}, are gauge invariants. The importance of these gauge invariants in the study of finite-dimensional Hopf algebras, especially in their classification, naturally demands us to look for new constructions of gauge invariants.

Inspired by the intimate relationship among the Frobenius-Schur indicators of a semisimple Hopf algebra $H$ over $\mathbb{C}$, the Turaev-Viro invariants of lens spaces associated to the category $\operatorname{Rep}(H)$ \cite{TV92} and the (involutory) Kuperberg invariant of lens spaces associated to $H$ \cite{Kup91,BW95}, the authors of \cite{CNW25} were led to the study of the gauge invariance of Kuperberg invariants of framed 3-manifolds associated to finite-dimensional Hopf algebras. It is shown in op.~cit.~that the non-semisimple indicators in \cite{KMN12} can be realized as the Kuperberg invariants of certain framed lens spaces. Moreover, the Kuperberg invariant of any framed lens space is a gauge invariant of finite-dimensional Hopf algebras over arbitrary base fields. This result motivated Question 1.1 in \cite{CNW25}: Are all framed Kuperberg invariants gauge invariants? An affirmative answer to this question would not only imply that we can systematically construct infinite families of gauge invariants of finite-dimensional Hopf algebras, but also indicates a potential categorical construction of such framed 3-manifold invariants, which is desirable in the study of non-semisimple quantum invariants and topological field theories \cite{Lyu95I,CGPT20-kup, DGGPR22, MSWY23}. In addition to the lens spaces, the Kuperberg invariants of a family of framed 3-manifold of genus 2 are also shown to be gauge invariants in \cite{CNW25}. One soon realizes that the 3-manifolds studied in \cite{CNW25} are non-hyperbolic, so it is natural to ask: Is the gauge invariance of the framed Kuperberg invariants related to the geometry of 3-manifolds? Moreover, in view of the fact that most 3-manifolds are hyperbolic \cite{Thur82}, it is hard to disagree that a decisive supporting evidence for an affirmative answer to \cite[Question 1.1]{CNW25} for all 3-manifolds would come from the gauge invariance of the Kuperberg invariant of  framed hyperbolic 3-manifolds.

In this paper, we prove the gauge invariance of the Kuperberg invariant of the Weeks manifold (with framings), which is the unique hyperbolic 3-manifold of smallest volume \cite{GMM09}. More precisely, our main theorem is 

\begin{mainthm}(Theorem \ref{weeksthm})
Let $H$ a finite-dimensional Hopf algebra over an arbitrary base field. For any framing $f$ on the Weeks manifold $W$, the Kuperberg invariant $Z(W, f, H)$ is a gauge invariant of $H$.
\end{mainthm}

This is the first example of gauge invariants of general (not necessarily semisimple) finite-dimensional Hopf algebras via hyperbolic 3-manifolds. To prove the main theorem, we first design a Heegaard diagram of the Weeks manifold with two vector fields and show that it is a framed Heegaard diagram (i.e., it is given by a framing $f_0$ on the Weeks manifold). Then we prove the gauge invariance of the Kuperberg invariant $Z(W, f_0, H)$ of the framed Heegaard diagram by showing that it is invariant under gauge transformations. Finally, we argue that the Kuperberg invariant of all other framings on the Weeks manifold are all different from the gauge invariant $Z(W, f_0, H)$ by a multiple of another gauge invariant, hence complete the proof. As is remarked above, our result in this paper provides a very important evidence for the gauge invariance of framed Kuperberg invariants of all 3-manifolds. Finally, in the last section, we check the gauge invariance of the Kuperberg invariant of the 3-torus with a framing, which is a 3-manifold of genus 3.

The paper is organized as follows. In Sections 2, we review basic concepts related finite-dimensional Hopf algebras. In Section 3, we review the definition of the Kuperberg invariant of framed 3-manifolds. In Section 4, we prove our main theorem on the gauge invariance of the Kuperberg invariants of the Weeks manifold. Finally, in Section 5, we prove the gauge invariance of the Kuperberg invariant of the 3-torus with a framing.

\section*{Acknowledgements}
We thank Siu-Hung Ng for fruitful discussions. Y.~W.~is supported by NSFC Grant 12301045 and 12571041, and Beijing Natural Science Foundation Key Program Grant Z220002.

\section{Hopf algebra and gauge invariant}
In this section, we recall the basis structure of Hopf algebras and their gauge transformations. All algebras in this paper are assumed to be finite-dimensional over its base field. 

Let $H$ be a finite-dimensional Hopf algebra over a field $k$ with multiplication $m: H\otimes H\rightarrow H$, unit $1_H:k\rightarrow H$, comultiplication $\Delta: H\rightarrow H\otimes H$, counit $\varepsilon: k\rightarrow H$ and antipode
$S: H\rightarrow H$. We will use the Sweedler notation $\Delta(x)=\sum_{(x)}x_{(1)}\otimes x_{(2)}$ or 
$\Delta(x)=x_{(1)}\otimes x_{(2)}$ with the summation suppressed. 
Iterated comultiplication is defined inductively by
\[\begin{aligned}
	\Delta^0=\varepsilon\circ 1_H\,,\quad \Delta^1=\mathrm{id}_H\,,\quad \Delta^2=\Delta\,, \quad \Delta^{n+1}=(\mathrm{id}\otimes\Delta^{(n)})\circ \Delta \quad\text{for $n \ge 2$.}
\end{aligned}\]
The suppressed Sweedler notation is $\Delta^n(x)=x_{(1)}\otimes\cdots\otimes x_{(n)}$. An element $x\in H$ is called grouplike if $\Delta(x)=x\otimes x$.

A left (resp.~right) integral in $H$ is an element $\Lambda^L$ (resp.~$\Lambda^R$) in $H$ such that
\[\begin{aligned}
	x\Lambda^L=\varepsilon(x)\Lambda^L \quad (\text{resp.~}\Lambda^Rx=\varepsilon(x)\Lambda^R)\,.
\end{aligned}\] 
It turns out that the space of left and right integrals are both 1-dimensional \cite{Sweedler1969}. The dual space $H^*$ of $H$ is also a Hopf algebra with structure maps dual to those of $H$. A left integral $\lambda^L$ (resp.~right integral $\lambda^R$) in $H^*$ is also called a left cointegral (resp.~right cointegral) of $H$, which satisfies 
\[\begin{aligned}
	x_{(1)}\lambda^L(x_{(2)})=\lambda^L(x)1_H,~~~~~~~~\lambda^R(x_{(1)})x_{(2)}=\lambda^R(x)1_H
\end{aligned}\]
for any $x\in H$. Since the integrals exist up to scalar, there are distinguished grouplike elements $g\in H$ and $\alpha\in H^*$ such that
\[\begin{aligned}
	\Lambda^Lx=\alpha(x)\Lambda^L\,,\quad\text{and}\quad\lambda^R(x_{(1)})x_{(2)}=\lambda^R(x)g
\end{aligned}\]
for all $x \in H$.

It is well-known that $\lambda^R(\Lambda^L)\neq 0$ \cite{Rad90}. In this paper, we fix a choice a pair of normalized integrals, namely, a pair of integrals $\Lambda^L \in H$ and $\lambda^R \in H^*$ such that $\lambda^R(\Lambda^L)=1$, and we write $\Lambda=\Lambda^L$ and $\lambda=\lambda^R$ for short. Starting with the normalized integrals, we will have associated integrals
\[\begin{aligned}
	\Lambda^R:=S(\Lambda),~~~~~~~~\lambda^L:=\lambda\circ S=g\rightharpoonup\lambda
\end{aligned}\]
such that $\Lambda^R$ is a right integral of $H$ and $\lambda^L$ is a left integral of $H^*$, and $\lambda(\Lambda^R)=\lambda^L(\Lambda^R)=1$, $\lambda^L(\Lambda^L)=\alpha(g)$. We will use the notation $g\rightharpoonup\lambda\in H^*$, $g\rightharpoonup\lambda(x)=\lambda(xg)$ for any $x \in H$. The following theorem on the trace function plays an important role in our proof for the gauge invariance of Kuperberg invariants. It proof can be found in \cite{Rad94} and \cite{CNW25}.  

\begin{theorem}\label{integral}
Let $H$ be a finite-dimensional Hopf algebra, $\Lambda$ and $\lambda$ is a pair of normalized integrals such that $\lambda(\Lambda)=1$, $X\in \mathrm{End}(H)$ \\
\noindent$(1)$ $S(a)\Lambda_{(1)}\otimes \Lambda_{(2)}=\Lambda_{(1)}\otimes a\Lambda_{(2)}$.\\
\noindent$(2)$ $\mathrm{Tr}(X)=\lambda(S\circ X(\Lambda_{(2)})\Lambda_{(1)})=\lambda(S(\Lambda_{(2)})X(\Lambda_{(1)})).$\\
\noindent$(3)$ $\lambda S(aX(\Lambda_{(1)})\Lambda_{(2)})=\lambda S(X(\Lambda_{(1)}S(a))\Lambda_{(2)})$.\qed
\end{theorem}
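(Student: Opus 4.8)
These three assertions are classical facts about the integrals of a finite-dimensional Hopf algebra (going back to Radford; see also \cite{CNW25}), and the plan is to prove them as follows.

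\emph{Part (1).} I would deduce this from the bijectivity of the ``Galois-type'' map $\beta\colon H\otimes H\to H\otimes H$, $\beta(x\otimes y)=xy_{(1)}\otimes y_{(2)}$, whose inverse is $\beta^{-1}(x\otimes y)=xS(y_{(1)})\otimes y_{(2)}$ (verified directly from the antipode axioms, no invertibility of $S$ needed). Applying $\beta^{-1}$ to the two sides of (1) and simplifying with the telescoping identity $\Lambda_{(1)}S(\Lambda_{(2)})\otimes\Lambda_{(3)}=1_H\otimes\Lambda$, one finds that the left side becomes $S(a)\otimes\Lambda$ and the right side becomes $S(a_{(1)})\otimes a_{(2)}\Lambda$; but $a_{(2)}\Lambda=\varepsilon(a_{(2)})\Lambda$ since $\Lambda$ is a left integral, so the right side collapses to $S(a)\otimes\Lambda$ as well. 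As $\beta^{-1}$ is injective, this proves (1). (This is really the Fundamental Theorem of Hopf modules in disguise.)

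\emph{Part (2).} Both sides are linear in $X$, so it suffices to treat a rank-one $X$, i.e.\ $X(b)=h\,\phi(b)$, for which $\mathrm{Tr}(X)=\phi(h)$. For the first expression, $\lambda(S(X(\Lambda_{(2)}))\Lambda_{(1)})$ then equals $\phi\big(\lambda(S(h)\Lambda_{(1)})\Lambda_{(2)}\big)$, so the assertion reduces to the reproducing formula $h=\lambda(S(h)\Lambda_{(1)})\Lambda_{(2)}$; and this is immediate, since Part (1) gives $S(h)\Lambda_{(1)}\otimes\Lambda_{(2)}=\Lambda_{(1)}\otimes h\Lambda_{(2)}$, whence $\lambda(S(h)\Lambda_{(1)})\Lambda_{(2)}=h\,\lambda(\Lambda_{(1)})\Lambda_{(2)}=h\,\lambda(\Lambda)1_H=h$ using the cointegral identity and the normalization $\lambda(\Lambda)=1$. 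The second expression is handled similarly, reducing to the companion reproducing formula $h=\lambda(S(\Lambda_{(2)})h)\Lambda_{(1)}$; here I would first use Part (1) to see that $h\mapsto\lambda(S(\Lambda_{(2)})h)\Lambda_{(1)}$ is a left $H$-module endomorphism of ${}_H H$, hence right multiplication by a fixed element, and then identify that element with $1_H$ using the remaining (co)integral properties and the normalization.

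\emph{Part (3).} This follows from Part (2) together with cyclicity of the trace. Writing $R_{S(a)}$ for right multiplication by $S(a)$ on $H$, applying Part (2) to the endomorphism $R_{S(a)}\circ S\circ X$ (which sends $b$ to $S(aX(b))$) identifies the left side of (3) with $\mathrm{Tr}(R_{S(a)}\circ S\circ X)$, while applying it to $S\circ X\circ R_{S(a)}$ (which sends $b$ to $S(X(bS(a)))$) identifies the right side with $\mathrm{Tr}(S\circ X\circ R_{S(a)})$. These two traces coincide because $\mathrm{Tr}(PQ)=\mathrm{Tr}(QP)$ with $P=R_{S(a)}$ and $Q=S\circ X$.

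\emph{Main obstacle.} The argument is conceptually short but notoriously sensitive to conventions: left versus right integrals and cointegrals, the placement of $S$ versus $S^{-1}$, the side on which one multiplies, and the modular elements $g$ and $\alpha$ all have to be tracked exactly, and over an arbitrary base field there is no semisimplicity to lean on. The delicate step is Part (2): pinning down the two reproducing formulas with precisely the right handedness and antipode placements so that both expressions for $\mathrm{Tr}(X)$ emerge with no spurious scalar; once that is secured, Parts (1) and (3) are routine.
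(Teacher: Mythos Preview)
Your argument is correct. Note that the paper does not actually supply a proof of this theorem: it cites \cite{Rad94} and \cite{CNW25} and appends a \qed, so there is no in-paper argument to compare against. Your treatments of (1) via the Galois map and of (3) via trace cyclicity are clean and standard.

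For Part (2) your plan also works, and in fact is simpler than you suggest: applying $\mathrm{id}\otimes S$ to (1) yields $c\,\Lambda_{(1)}\otimes S(\Lambda_{(2)})=\Lambda_{(1)}\otimes S(\Lambda_{(2)})\,c$ for all $c$, so the map $\Phi(h)=\lambda(S(\Lambda_{(2)})h)\Lambda_{(1)}$ equals $h\cdot\bigl(\lambda^L(\Lambda_{(2)})\Lambda_{(1)}\bigr)=\lambda(S(\Lambda))\,h$, a scalar multiple of the identity. The one step you leave implicit is that this scalar $\lambda(S(\Lambda))$ equals $1$. This is \emph{not} an immediate consequence of the normalization $\lambda(\Lambda)=1$ over an arbitrary field: the tempting ``compare $\mathrm{Tr}(\mathrm{id})$ via the two formulas'' argument gives $\dim H=\lambda(S(\Lambda))\dim H$, which is vacuous when $\operatorname{char}k\mid\dim H$. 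Rather, $\lambda(S(\Lambda))=1$ is one of Radford's results (the paper records it just above the theorem as $\lambda(\Lambda^R)=1$). Once you invoke that, your proof is complete; your ``main obstacle'' paragraph correctly identifies exactly where the delicacy lies.
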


The defining feature of Hopf algebras is that their representation categories are finite tensor categories. Two Hopf algebras $H$ and $K$ are said to be gauge equivalent if their representation categories are equivalent as monoidal categories. A quantity $Z(H)$ is called a gauge invariant if $Z(H)=Z(K)$ whenever the Hopf algebra $H$ and $K$ are gauge equivalent. 

Alternatively, one has the following intrinsic characterization of gauge equivalence. An invertible element $F=\sum\limits_{i}f^{[1]}_i\otimes f^{[2]}_i\in H\otimes H$ is called a normalized 2-cocycle if it satisfies
\[
\begin{aligned}
	(F\otimes 1)(\Delta\otimes \mathrm{id})(F)=(1\otimes F)(\mathrm{id}\otimes\Delta)(F)\,,\quad (\varepsilon\otimes\mathrm{id})(F)=(\mathrm{id}\otimes\varepsilon)(F)=1.
\end{aligned}
\]
With a 2-cocycle $F$, we can define a new Hopf algebra structure on $H$ \cite{KMN12}. Namely, $H_F$, called (Drinfeld) twist of $H$, is a Hopf algebra with the same algebra structure as $H$. Its comultiplication and antipode are given as follows:
\[
\begin{aligned}
\Delta_F(x)=F\Delta(x)F^{-1},~~~~~~\varepsilon_F(x)=\varepsilon(x),~~~~~S_F(x)=uS(x)u^{-1}
\end{aligned}
\]
where $u:=\sum\limits_{i}f^{[1]}_iS(f^{[2]}_i)$. Let $F^{-1}=\sum\limits_{j}d^{[1]}_j\otimes d^{[2]}_j$. Then $u^{-1}=\sum\limits_{j}S(d^{[1]}_j)d^{[2]}_j$. In addition, $S_F^2(x)=QS^2(x)Q^{-1}$ where $Q:=uS(u^{-1})$.

The iterated comultiplication $\Delta^n_F$ can be computed in terms of the $n$-fold tensor $F_n$ that is defined recursively: $F_1=1$, $F_2=F$, $F^{-1}_2=F^{-1}$ and
\[\begin{aligned}
F_{n+1}=(1\otimes F_n)(\mathrm{id}\otimes\Delta^n)(F),~~~~F^{-1}_{n+1}=(1\otimes F^{-1}_n)(\mathrm{id}\otimes\Delta^n)(F^{-1}).
\end{aligned}\]
Then $\Delta_F^n(x)=F_n\Delta(x)F^{-1}_n$ \cite{KMN12}.

It is shown in \cite{NS08} (see also \cite{Sch96,EG02tri}) that two Hopf algebras $H$ and $K$ are gauge equivalent if and only if there exists a normalized 2-cocycle $F$ such that $H_F$ is isomorphic to $K$.  In order to find gauge invariant for $H$, we need to construct $Z(H)$ that keeps unchanged under gauge transformation. That is $Z(H)=Z(H_F)$ for any 2-cocycle $F$. 

In the following proposition, we collect some properties of 2-cocycles proven in \cite{CNW25}, which will be used in the rest of the paper (as is noted before, the summation notion will be suppressed for simplicity).

\begin{prop}[{\cite[Lem.\ 5.2]{CNW25}}]\label{cocycle}
Let $F_m=f^{[1]}_i\otimes\cdots\otimes f^{[m]}_i$ and $F^{-1}_n=d^{[1]}_j\otimes\cdots\otimes d^{[n]}_j$.\\
\medskip
\noindent$(1)$ $F_{m+n}=(F_m\otimes F_n)(\Delta^m\otimes\Delta^n)(F)$.\\
\medskip
% \noindent$(2)$ $F^{-1}_{m+n}=(\Delta^m\otimes\Delta^n)(F)(F^{-1}_m\otimes F^{-1}_n)$.\\
\noindent$(2)$ $f^{[1]}_iS(f^{[m]}_i)_{(1)}\otimes\cdots\otimes f^{[m-1]}_iS(f^{[m]}_i)_{(m-1)}=uS(d^{[m-1]}_i)\otimes\cdots\otimes uS(d^{[1]}_i)$.\\
\medskip
\noindent$(3)$ $S(d^{[1]}_j)_{(1)}d^{[2]}_j\otimes\cdots\otimes S(d^{[1]}_j)_{(m-1)}d^{[m]}_j=S(f^{[m-1]}_j)u^{-1}\otimes\cdots\otimes S(f^{[1]}_j)u^{-1}$.\\
\medskip
\noindent$(4)$ $\left(\bigotimes\limits_{s=1}^{m-1}f^{[s]}_i\right)\otimes S(f^{[m]}_i)u^{-1}f^{[m+1]}_i\otimes\left(\bigotimes\limits_{s=m+2}^{n}f^{[s]}_i\right)=\left(\bigotimes\limits_{s=1}^{m-1}f^{[s]}_i\right)\otimes 1\otimes\left(\bigotimes\limits_{s=m}^{n-2}f^{[s]}_i\right)$.\\
\medskip
\noindent$(5)$ $\left(\bigotimes\limits_{s=1}^{m-1}d^{[s]}_j\right)\otimes d^{[m]}_juS(d^{[m+1]}_j)\otimes\left(\bigotimes\limits_{s=m+2}^{n}d^{[s]}_j\right)=\left(\bigotimes\limits_{s=1}^{m-1}d^{[s]}_j\right)\otimes 1\otimes\left(\bigotimes\limits_{s=m}^{n-2}d^{[s]}_j\right)$.\\
\medskip
\noindent$(6)$
$\Delta^n(u)=(d^{[1]}_i\otimes\cdots\otimes d^{[n]}_i)(u\otimes\cdots\otimes u)(S(d^{[n]}_j)\otimes\cdots\otimes S(d^{[1]}_j))$.\\
\medskip
\noindent$(7)$
$\Delta^n(u^{-1})=(S(f^{[n]}_i)\otimes\cdots\otimes S(f^{[1]}_i))(u^{-1}\otimes\cdots\otimes u^{-1})(f^{[1]}_j\otimes\cdots\otimes f^{[n]}_j)$.\\
\medskip
\noindent$(8)$ 
$\Delta^n(Q)=(d^{[1]}_i\otimes\cdots\otimes d^{[n]}_i)(Q\otimes\cdots\otimes Q)(S^2(f^{[n]}_i)\otimes\cdots\otimes S^2(f^{[n]}_i))$.\\
\medskip
\noindent$(9)$ We have
\[\begin{aligned}
&\lambda(S(\Lambda_{(2)})S_F\circ Y\circ\Delta_F^{n-1}(\Lambda_{(1)}))\\
=&\lambda S (d^{[1]}_jY(f^{[1]}_i\Lambda_{(1)}d^{[2]}_j\otimes\cdots\otimes f^{[n-1]}_i\Lambda_{(n-1)}d^{[n]}_j)f^{[n]}_i\Lambda_{(n)}).
\end{aligned}\]
for any linear map $Y:H^{\otimes(n-1)}\rightarrow H$. \qed
\end{prop}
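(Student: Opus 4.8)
The plan is to establish the nine identities in the order in which they are stated, treating $(1)$ as the structural backbone and $(9)$ as the capstone: every intermediate identity is obtained by induction from the normalized $2$-cocycle condition, coassociativity, and the standard Hopf algebra axioms (the antipode being an anti-algebra and anti-coalgebra morphism, and $S$ being bijective since $H$ is finite dimensional), while $(9)$ additionally feeds on Theorem \ref{integral}. Throughout, the recursions defining $F_n$ and $F^{-1}_n$ and the relations $FF^{-1}=F^{-1}F=1$ are the basic moves.

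For $(1)$, I would first prove the generalized cocycle identity $(F_m\otimes 1)(\Delta^m\otimes\mathrm{id})(F)=(1\otimes F_m)(\mathrm{id}\otimes\Delta^m)(F)=F_{m+1}$ by induction on $m$ from the ordinary $2$-cocycle condition, using coassociativity to regroup; this is the case $n=1$. General $(1)$ then follows by a second induction on $n$, plugging the recursion $F_{n+1}=(1\otimes F_n)(\mathrm{id}\otimes\Delta^n)(F)$ into the case just proved and applying coassociativity of $\Delta^m$ and $\Delta^n$. The inverse statement $F^{-1}_{m+n}=(\Delta^m\otimes\Delta^n)(F^{-1})(F^{-1}_m\otimes F^{-1}_n)$ is then free, since $\Delta$ is an algebra map, so $(\Delta^m\otimes\Delta^n)(F)$ is invertible with the expected inverse.

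Next, $(2)$ is proved by induction on $m$: the base case $m=2$ is precisely $u=f^{[1]}_iS(f^{[2]}_i)$ together with $F^{-1}F=1$, and the inductive step splits off one tensor leg via $(1)$, then uses $S(xy)=S(y)S(x)$ and the antipode axiom to trade the extra copy of $F$ for a copy of $F^{-1}$ and an extra $u$; identity $(3)$ follows from $(2)$ by applying $S^{-1}$ in each leg and interchanging $F\leftrightarrow F^{-1}$, $u\leftrightarrow u^{-1}$. Identities $(4)$ and $(5)$ are then short consequences: after expanding the relevant $F_n$ by $(1)$, the factor $S(f^{[m]}_i)u^{-1}f^{[m+1]}_i$ collapses to the counit because $u^{-1}=S(d^{[1]}_j)d^{[2]}_j$ and $F^{-1}F=1$, so the two neighbouring legs telescope to $1$ (and symmetrically with $u$ for $(5)$). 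For the coproduct formulas, I would prove the case $n=2$ of $(6)$, namely $\Delta(u)=(d^{[1]}_i\otimes d^{[2]}_i)(u\otimes u)(S(d^{[2]}_j)\otimes S(d^{[1]}_j))$, directly from $u=f^{[1]}S(f^{[2]})$, the anti-coalgebra property of $S$, and the $2$-cocycle conditions for $F$ and $F^{-1}$, then iterate by coassociativity and induction on $n$; $(7)$ is the mirror statement for $u^{-1}$ (or follows by the $S^{-1}$-duality), and $(8)$ is obtained by combining $(6)$ and $(7)$ through $Q=uS(u^{-1})$ and $\Delta^n(S(u^{-1}))=(S\otimes\cdots\otimes S)\big(\tau_n\Delta^n(u^{-1})\big)$, where $\tau_n$ reverses all tensor factors.

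The main obstacle is $(9)$, which I expect to absorb most of the labour. The strategy is to substitute $\Delta_F^{n-1}(\Lambda_{(1)})=F_{n-1}\,\Delta^{n-1}(\Lambda_{(1)})\,F^{-1}_{n-1}$ and $S_F=u\,S(-)\,u^{-1}$ into the left-hand side, use coassociativity to absorb $\Delta^{n-1}(\Lambda_{(1)})$ and the trailing $\Lambda_{(2)}$ into $\Lambda_{(1)}\otimes\cdots\otimes\Lambda_{(n)}$, and then transport the resulting expression into the claimed right-hand side. This transport is where several ingredients act at once: $(1)$ splits $F_n$ (resp.\ $F^{-1}_n$) into $F_{n-1}$ (resp.\ $F^{-1}_{n-1}$) and one extra copy of $F$ (resp.\ $F^{-1}$); identities $(2)$–$(5)$ move the stray antipodes and absorb the factors $u^{\pm 1}$; and Theorem \ref{integral}(1) and $(3)$ commute the $S$-conjugated terms past the integral $\Lambda$ and encode the cyclicity of the trace functional $\lambda(S(\Lambda_{(2)})\,(-)(\Lambda_{(1)}))$. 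The delicate point is the bookkeeping of Sweedler indices, which I would organize as a single chain of rewritings rather than as an induction on $n$.
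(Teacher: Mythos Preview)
The paper does not actually prove this proposition: it is stated with a terminal \qed and attributed wholesale to \cite[Lem.~5.2]{CNW25}, so there is no argument in the present paper against which to compare your proposal. What you have written is therefore not a reconstruction of the paper's proof but an independent sketch of how one would establish the lemma from scratch.

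That said, your outline is sound. The inductive scheme for $(1)$ from the $2$-cocycle condition and coassociativity is the standard route, and your reductions of $(2)$--$(5)$ to $(1)$ plus the antipode axioms, and of $(6)$--$(8)$ to the $n=2$ case plus iteration, are the expected arguments. For $(9)$, your plan---substitute $\Delta_F^{n-1}=F_{n-1}\Delta^{n-1}(-)F^{-1}_{n-1}$ and $S_F=uS(-)u^{-1}$, reindex via coassociativity so that $\Lambda_{(1)},\dots,\Lambda_{(n)}$ appear, then use $(1)$ to pass between $F_{n-1}$ and $F_n$, and finally invoke Theorem~\ref{integral}(1),(3) to shuttle the boundary factors $d^{[1]}_j$ and $f^{[n]}_i$ into place---is exactly the mechanism one needs; the only real work, as you say, is the Sweedler bookkeeping. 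If you were to write this out in full you would be supplying what the present paper deliberately outsources to the reference.
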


\section{The Kuperberg invariant}

Given a finite-dimensional Hopf algebra $H$, Kuperberg defined an topological invariant for any closed 3-manifold with a framing \cite{Kup96}. When 3-manifold $M$ is equipped with Riemannian metric, a framing $f$ on $M$ consists of three orthonormal tangent vector fields $(b_1, b_2, b_3)$ up to homotopy. A classical result in low dimensional topology is that any orientable closed 3-manifold $M$ is obtained by gluing two handle bodies of the same genus along their boundary surfaces \cite{Rol76}. The gluing is described by two groups of disjoint simple closed curves on the surface, one group of them are referred to as lower curves, and the other are called upper curves. The upper and lower curves are also called Heegaard circles. Such orientable closed surface with two groups of disjoint simple closed curves is called a Heegaard diagram of $M$, which gives a diagrammatic presentation of a 3-manifold. 

In this paper, we use planar presentations for Heegaard diagrams. That is, a genus $k$ orientable closed surface minus one point is identified as a plane with $2k$ holes, which are divided into $k$ pairs so that the holes in each pair are connected by handle above the plane. Then the Heegaard circles are represented by curves on the punctured plane whose end points on the holes are understood as being connected through the handles above the plane. Examples are given in the next two sections for the Weeks manifold and 3-torus, which are Heegaard diagrams of genus two and three respectively.

Based on a Heegaard diagram, Kuperberg introduced a combinatorial way to describe a framing $f=(b_1, b_2, b_3)$. It suffices to depict two vector fields in the framing as the third one is determined by right-hand rule. In our planar presentation for Heegaard diagram, the first vector field $b_1$ has $k$ index $-1$ singularities such that each index $-1$ singularity lies on a Heegaard circle and its outward-pointing vectors are tangent to the circle. The index $-1$ singularity is called the base point of the corresponding Heegaard circle. The other vector field $b_2$ is orthogonal to $b_1$ (not necessarily tangent to the surface) except at the base points. Some arcs, called twist fronts, are used to present $b_2$, along which $b_2$ is normal to the surface. A twist front is decorated by small triangles to indicate the direction that $b_2$ rotates by right-hand rule relative to $b_1$. 

When a Heegaard diagram is equipped with such two vector fields $b_1$ and $b_2$, one can count the rotation of Heegaard circles relative to the vector fields. Orient all the Heegaard circles. For any $p$ on a Heegaard circle $c$, Let $\theta_c(p)$ be the counterclockwise rotation of the tangent of $c$ relative to $b_1$ from the base point to $p$ in units of $1=360^\circ$, and $\theta_c$ be the total rotation when it is back to the base point. Note that $\theta_c$ is always a half integer. Let $\phi_c(p)$ be the right-hanged rotation of $b_2$ around $b_1$ from the base point to $p$, and $\phi_c$ be the total rotation. Then $\phi_c(p)$ is the aggregated number of signed crossings of $c$ with twist fronts. The sign is positive if the orientation of $c$ coincide with the triangle of the twist front. And the base point of $c$ contributes half of the sign to the total rotation $\phi_c$. The vector fields $b_1$ and $b_2$ are said to be admissible if they satisfies the following conditions:
\[
\begin{aligned}
	\theta_\mu=-\phi_\mu~~~~\text{for each upper curve}~~~~ \mu,~~~~ 
	\text{and}~~~~\theta_\eta=\phi_\eta~~~~\text{for each lower curve}~~~~\eta.
\end{aligned}
\]  
A Heegaard diagram with two admissible vector fields is called a framed Heegaard diagram. It is shown in \cite{Kup96} that a framed Heegaard diagram can completely describe a framed 3-manifold.   

Given a framed Heegaard diagram of a framed 3-manifold with lower and upper curves $\{\eta_1, \ldots, \eta_k\}$ and $\{\mu_1, \ldots, \mu_k\}$ respectively, and a finite-dimensional Hopf algebra $H$ with a pair of normalized integrals $\Lambda$ and $\lambda$, the Kuperberg invariant is constructed by combining the geometric information and the Hopf algebra operations. Define $T: H\rightarrow H$ be the linear map $T(x)=g^{-1}S^2(x)g$, where $g\in H$ is the distinguished group-like element.  Let  $p$ be an intersection point between $\eta_i$ and $\mu_j$ , we define $s(p)$ and $t(p)$ by
\[\begin{aligned}
s(p)=2(\theta_{\eta_i}(p)-\theta_{\mu_j}(p))+\frac{1}{2},~~~~t(p)=\phi_{\eta_i}(p)-\phi_{\mu_j}(p)
\end{aligned}\]
Note that $s(p)$ is half integer for all intersection points between lower and upper curves. We introduce twisted integrals: for any integer $m$ 
\[\begin{aligned}
\Lambda_{m-\frac{1}{2}}=\alpha^{-m}\rightharpoonup S(\Lambda),~~~~\lambda_{m-\frac{1}{2}}=g^m\rightharpoonup \lambda.
\end{aligned}\]
where the first action $\rightharpoonup$ is that $\alpha^{-m}\rightharpoonup x=x_{(1)}\alpha^{-m}(x_{(2)})$ for $x\in H$.

Let $I$ be the set of all intersection points between the lower and upper curves, and $n=|I|$, $n_i=|\eta_i\cap I|$.  For each lower curve $\eta_i$, we write $L^i=\Lambda_{\theta_{\eta_i}}$ and $\Delta^{n_i}(L^i)=L^i_{(1)}\otimes \cdots\otimes L^i_{(n_i)}$. Then the Kuperberg invariant can be constructed in the following steps:

\noindent (1) For each lower curve $\eta_i$, assign the coproduct factor $L^i_{(m)}$ of the $L^i$ to the intersection points following the direction of $\eta_i$ starting from its base point. 

\noindent (2) Label the intersection point $p$ by $S^{s(p)}T^{t(p)}(L^i_{(m)})$ when $L^i_{(m)}$ is assigned to $p$.

\noindent (3) For each upper curve $\mu_j$, multiply the labels on $\mu_j$ starting from the base point and following the direction of $\mu_j$. Then we have a product $w_j\in H$ of $S^{s(p)}T^{t(p)}(L^i_{(m)})$'s for the upper curve $\mu_j$.  

\noindent (4) The Kuperberg invariant is given by
\[\begin{aligned}
Z(M, f, H)=\prod\limits_{\mu_j}\langle \lambda_{-\theta(\mu_j)}, w_j\rangle
\end{aligned}\]
Note that since the coproduct of $L^i$ is a sum of simple tensors, $Z(M, f, H)$ is a summation of these product for all summands of $L^i$'s.

\section{The Weeks manifold}

In this section, we consider the Kuperberg invariant of the Weeks manifold, which is the hyperbolic closed 3-manifold with the smallest volume. The Weeks manifold admits a genus two Heegaard diagram as in Figure \ref{weeks} \cite{CK20}. 

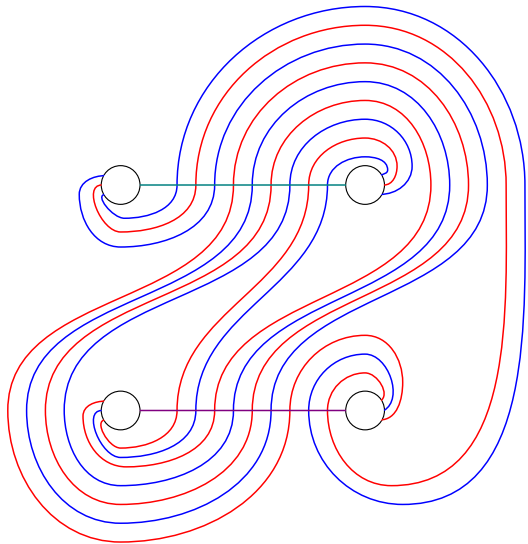
\begin{figure}[htbp]
	\centering
	\begin{tikzpicture}[scale=0.25,decoration={markings,
		mark=between positions 0 and 0.9 step 10mm with {\draw[->] (0.1, 0)--(0.2, 0);}}]
	\draw [line width=0.3mm] (0, 12) circle(1);
	\draw [line width=0.3mm] (13, 12) circle(1);
	\draw [line width=0.3mm] (0, 0) circle(1);
	\draw [line width=0.3mm] (13, 0) circle(1);
	\draw [red, line width=0.2mm] (-1, 12) to [out=180, in=180] (0, 9.5) to [out=0, in=-90] (4, 12) to [out=90, in=180] (13, 20.5) to [out=0, in=90] (20.5, 12) to [out=-90, in=0] (14.4, -4) to [out=180, in=-90] (11,0) to [out=90, in=180] (13, 2) to [out=0, in=30] (13.8, 0.5) ;
	\draw [red, line width=0.2mm] (-0.8, 0.5) to [out=-180, in=90] (-2, -0.3)to [out=-90, in=180] (0.3, -3) to [out=0, in=-90] (5, 0) to [out=90, in=-90] (16.5, 12) to [out=90, in=0] (13, 16.5) to [out=180, in=90] (8,12) to [out=-90, in=90] (-4, 0) to [out=-90, in=180] (0, -5) to [out=0, in=-90] (7,0) to [out=90, in=-90] (18.5, 12) to [out=90, in=0] (13, 18.5) to [out=180, in=90] (6, 12) to [out=-90, in=90] (-6, 0) to [out=-90, in=180] (0,-7) to [out=0, in=-90] (9, 0) to [out=90, in=180] (13, 4) to [out=0, in=90] (15,1.5)  to [out=-90, in=0] (13.8, -0.5) ;
	\draw [red, line width=0.2mm] (-0.8, -0.5) to [out=180, in=180] (0, -2)  to [out=0, in=-90] (3,0) to [out=90, in=-90] (10,12) to [out=90, in=180] (13, 14.5) to [out=0, in=90] (14.7,13)to [out=-90, in=0] (14,12) ;
	\draw [blue, line width=0.2mm] (-1, 0) to [out=180, in=180] (0, -2.5) to [out=0, in=-90] (4, 0) to [out=90, in=-90] (11,12) to [out=90, in=180] (13, 13.5) to [out=0, in=90] (14.2,12.8)to [out=-90, in=30] (13.8,12.5)  ;
	\draw [blue, line width=0.2mm] (-0.8, 11.5) to [out=180, in=180] (0.2, 10.23) to [out=0, in=-90] (3, 12) to [out=90, in=180] (13, 21.5) to [out=0, in=90] (21.5, 12) to [out=-90, in=0] (15, -5) to [out=180, in=-90] (10,0) to [out=90, in=180] (13, 3) to [out=0, in=30] (14, 0) ;
	\draw [blue, line width=0.2mm] (-0.8, 12.5) to [out=-180, in=90] (-2.2, 11.5)to [out=-90, in=180] (0, 8.7) to [out=0, in=-90] (5, 12) to [out=90, in=180] (13, 19.5) to [out=0, in=90] (19.5, 12) to [out=-90, in=90] (8,0) to [out=-90, in=0] (0,-6) to [out=180, in=-90] (-5, 0) to [out=90, in=-90] (7,12) to [out=90, in=180] (13, 17.5) to [out=0, in=90] (17.5, 12) to [out=-90, in=90] (6, 0) to [out=-90, in=0] (0,-4) to [out=180, in=-90] (-3,0) to [out=90, in=-90] (9, 12) to [out=90, in=180] (13, 15.5) to [out=0, in=90] (15.5,13)  to [out=-90, in=0] (13.8, 11.5) ;
	\draw [teal, line width=0.2mm] (1, 12) to  (12, 12);
	\draw [violet, line width=0.2mm] (1, 0) to (12, 0);
	\fill [white] (0, 12) circle(1);
	\fill [white] (13, 12) circle(1);
	\fill [white] (0, 0) circle(1);
	\fill [white] (13, 0) circle(1);
	\end{tikzpicture}
	\caption{Heegaard diagram of the Weeks manifold}
	\label{weeks}
\end{figure}

In this Heegaard diagram, the each pair of horizontal holes are connected by a handle. So the blue arcs are joined through these two handles and are parts of a single simple closed curve $\mu_1$. Similarly, the red arcs also form a simple closed curve $\mu_2$. They are referred to as upper curves. The two horizontal lines also give two simple closed curves $\eta_1$ (yellow), $\eta_2$ (green) and are referred to as lower curves. 

\begin{figure}[htbp]
	\centering
	\scalebox{0.8}{\begin{tikzpicture}[scale=0.55,decoration={markings,
		mark=between positions 0 and 0.9 step 10mm with {\draw[->] (0.1, 0)--(0.2, 0);}}]
	\draw [line width=0.3mm] (0, 12) circle(1);
	\draw [line width=0.3mm] (15, 12) circle(1);
	\draw [line width=0.3mm] (0, 0) circle(1);
	\draw [line width=0.3mm] (15, 0) circle(1);
	\draw [red, line width=0.2mm] (-1, 12) to [out=180, in=90] (-2, 10) to [out=-90, in=180] (0, 8) to [out=0, in=-90] (8, 12) to [out=90, in=180] (15, 21) to [out=0, in=90] (23, 12) to [out=-90, in=0] (15, -4) to [out=180, in=-90] (10,0) to [out=90, in=180] (15, 2) to [out=0, in=30] (15.8, 0.5) ;
	\draw [red, line width=0.2mm] (-0.8, 0.5) to [out=-180, in=90] (-2, -0.3)to [out=-90, in=180] (0.3, -4) to [out=0, in=-90] (4, 0) to [out=90, in=-90] (19, 12) to [out=90, in=0] (15, 17) to [out=180, in=90] (12,12) to [out=-90, in=90] (-4, 0) to [out=-90, in=180] (0, -6) to [out=0, in=-90] (6,0) to [out=90, in=-90] (21, 12) to [out=90, in=0] (15, 19) to [out=180, in=90] (10, 12) to [out=-90, in=90] (-6, 0) to [out=-90, in=180] (0,-8) to [out=0, in=-90] (8, 0) to [out=90, in=180] (15, 4) to [out=0, in=90] (17,2)  to [out=-90, in=0] (15.8, -0.5) ;
	\draw [red, line width=0.2mm] (-0.8, -0.5) to [out=180, in=180] (0, -2)  to [out=0, in=0] (0.8,-0.5) ;
	\draw [red, line width=0.2mm] (14.2, -0.5) to [out=180, in=-90] (11, 0)  to [out=90, in=180] (14.2,0.5) ;
	\draw [red, line width=0.2mm] (0.8, 0.5) to [out=45, in=-160] (7.5, 6)to [out=20, in=-120] (14.2, 11.5);
	\draw [red, line width=0.2mm] (0.8, 11.5) to [out=-40, in=-90] (5, 12)to [out=90, in=40] (0.8, 12.5);
	\draw [red, line width=0.2mm] (14.2, 12.5) to [out=150, in=180] (15, 15)to [out=0, in=90] (17, 13)to [out=-90, in=0] (16, 12);	
	\draw [blue, line width=0.2mm] (-0.8, 11.5) to [out=180, in=180] (0, 9) to [out=0, in=-90] (7, 12) to [out=90, in=180] (15, 22) to [out=0, in=90] (24, 12) to [out=-90, in=0] (15, -5) to [out=180, in=-90] (9,0) to [out=90, in=180] (15, 3) to [out=0, in=30] (16, 0) ;
	\draw [blue, line width=0.2mm] (-1, 0) to [out=180, in=180] (0, -3) to [out=0, in=-90] (3, -0.5)  to [out=90, in=-90] (2.7, -0.2) to [out=90, in=-160] (3, 0) to [out=20, in=-90] (3.3, 0.2) to [out=90, in=-90] (3, 0.5)to [out=90, in=-100] (14.3,11.2);
	\draw [blue, line width=0.2mm] (0.7, 11.2) to [out=-50, in=-90] (6, 12)to [out=90, in=50] (0.65, 12.8);
	\draw [blue, line width=0.2mm] (14.35, 12.8) to [out=150, in=180] (15, 14)to [out=0, in=0] (15.8, 12.5);
	\draw [blue, line width=0.2mm] (-0.8, 12.5) to [out=-180, in=90] (-3, 10)to [out=-90, in=180] (0, 7) to [out=0, in=-90] (9, 12) to [out=90, in=180] (15, 20) to [out=0, in=90] (22, 12) to [out=-90, in=90] (7,0) to [out=-90, in=0] (0,-7) to [out=180, in=-90] (-5, 0) to [out=90, in=-90] (11,12) to [out=90, in=180] (15, 18) to [out=0, in=90] (20, 12) to [out=-90, in=90] (5, 0) to [out=-90, in=0] (0,-5) to [out=180, in=-90] (-3,0) to [out=90, in=-160] (8,6.8) to [out=20, in=-135] (14.2, 11.8) ;
	\draw [blue, line width=0.2mm] (0.8, 11.8) to [out=-30, in=-90] (4, 12)to [out=90, in=30] (0.8, 12.3);
	\draw [blue, line width=0.2mm] (14.2, 12.3) to [out=150, in=180] (15, 16)to [out=0, in=90] (18, 13)to [out=-90, in=0] (15.8, 11.5);	
	\draw [teal, line width=0.2mm] (1, 12) to (11.5, 12) to [out=0, in=180] (12, 11.5)  to [out=0, in=180] (12.5, 12) to (14, 12);
	\draw [violet, line width=0.2mm] (1, 0) to (2.3, 0) to [out=0, in=180] (3, -0.7)  to [out=0, in=180] (3.7, 0) to (11.5,0) to [out=0, in=180] (11.75, 0.3)to [out=0, in=90] (12, 0) to [out=-90, in=180] (12.25, -0.3) to [out=0, in=180] (12.5, 0)to (14, 0);
	\draw [dashed, line width=0.2mm, postaction={decorate}] (3, 0) to (0, 0);
	\draw [dashed, line width=0.2mm, postaction={decorate}] (3, 0) to (12, 0);
	\draw [dashed, line width=0.2mm, postaction={decorate}] (14, 0) to (12, 0);
	\draw [dashed, line width=0.2mm, postaction={decorate}] (3, -9) to  (3, 0);	
	\draw [dashed, line width=0.2mm, postaction={decorate}] (-7, 0) to (0, 0);	
	\draw [dashed, line width=0.2mm, postaction={decorate}] (-7, -7) to (0, 0);	
	\draw [dashed, line width=0.2mm, postaction={decorate}] (12, 0) to  (12,-9);	
	\draw [dashed, line width=0.2mm, postaction={decorate}] (14, 0) to (25, 0);			
	\draw [dashed, line width=0.2mm, postaction={decorate}] (3, 12) to (0, 12);
	\draw [dashed, line width=0.2mm, postaction={decorate}] (3, 12) to (12, 12);
	\draw [dashed, line width=0.2mm, postaction={decorate}] (1, 12) to (12, 12);
	\draw [dashed, line width=0.2mm, postaction={decorate}] (3, 22) to (3, 12);	
	\draw [dashed, line width=0.2mm, postaction={decorate}] (-7, 7.5) to [out=0, in=-90] (3, 12);	
	\draw [dashed, line width=0.2mm, postaction={decorate}] (2.5, 22) to [out=-90, in=60] (0, 12);	
	\draw [dashed, line width=0.2mm, postaction={decorate}] (2.5, -9) to [out=90, in=-60] (0, 0);
	\draw [dashed, line width=0.2mm, postaction={decorate}] (-7, 19) to (0, 12);	
	\draw [dashed, line width=0.2mm, postaction={decorate}] (-7, 12) to (0, 12);	
	\draw [dashed, line width=0.2mm, postaction={decorate}] (-7, 9) to (-2, 9) to [out=0, in=-90] (1, 11) to [out=90, in=-30] (0, 12);	
	\draw [dashed, line width=0.2mm, postaction={decorate}] (12, 12) to (12, 22);	
	\draw [dashed, line width=0.2mm, postaction={decorate}] (12, 12) to [out=-90, in=180] (25, 7.5);	
	\draw [dashed, line width=0.2mm, postaction={decorate}] (15, 12) to [out=150, in=-90] (12.5, 21.5);	
	\draw [dashed, line width=0.2mm, postaction={decorate}] (15, 0) to [out=150, in=-90] (12.5, -8);	
	\draw [dashed, line width=0.2mm, postaction={decorate}] (15, 12) to (25, 22);	
	\draw [dashed, line width=0.2mm, postaction={decorate}] (15, 0) to (25, -9);
	\draw [dashed, line width=0.2mm, postaction={decorate}] (15, 12) to (25, 12);	
	\draw [dashed, line width=0.2mm, postaction={decorate}] (15, 12) to [out=-150, in=90] (13.5, 10) to [out=-90, in=180] (15, 9) to (25, 9);	
	\draw [dashed, line width=0.2mm, postaction={decorate}] (-7, 6) to (25, 6);	
	\draw [dashed, line width=0.2mm, postaction={decorate}] (-7, 4.5) to [out=0, in=90] (3, 0);	
	\draw [dashed, line width=0.2mm, postaction={decorate}] (-7, 0) to (0, 0);	
	\draw [dashed, line width=0.2mm, postaction={decorate}] (-7, 3) to (-2, 3) to [out=0, in=90] (1, 1) to [out=-90, in=30] (0, 0);	
	\draw [dashed, line width=0.2mm, postaction={decorate}] (12, 0) to [out=90, in=180] (25, 4.5);	
	\draw [dashed, line width=0.2mm, postaction={decorate}] (15, 0) to (25, 0);	
	\draw [dashed, line width=0.2mm, postaction={decorate}] (15, 0) to [out=120, in=-90] (13.5, 2) to [out=90, in=180] (15, 3) to (25, 3);	
	\draw [dashed, line width=0.2mm, postaction={decorate}] (3.3, 19) to [out=-90, in=180] (7.5, 14) to [out=0, in=-90] (11.7, 19);
	\draw [dashed, line width=0.2mm, postaction={decorate}] (3.3, -7) to [out=90, in=180] (7.5, -2) to [out=0, in=90] (11.7, -7);
	\draw [dashed, line width=0.2mm, postaction={decorate}] (0.5, 7) to [out=0, in=180] (7.5, 8) to [out=0, in=180] (16.5, 7);
	\draw [dashed, line width=0.2mm, postaction={decorate}] (0.5, 5) to [out=0, in=180] (7.5, 4) to [out=0, in=180] (16.5, 5);
	\draw [decorate, decoration={zigzag, segment length=5pt, amplitude=2pt}] (3, 12.1) to [out=135, in=15] (1, 12.19);%twist front
	\draw (3, 12) to [out=135, in=15] (1, 12.1);
	\draw [decorate, decoration={zigzag, segment length=5pt, amplitude=2pt}] (2.8, 0) to [out=120, in=60] (1, 0.05);%twist front
	\draw (3, 0) to [out=120, in=60] (1, 0.2);
	\draw [decorate, decoration={zigzag, segment length=5pt, amplitude=2pt}] (12, 12.15) to [out=60, in=120] (14, 12.2);%twist front
	\draw (12,12) to [out=60, in=120] (14, 12.1);
	\draw [decorate, decoration={zigzag, segment length=5pt, amplitude=2pt}] (12.15, 0) to [out=60, in=120] (13.95, 0);%twist front
	\draw (12,0) to [out=60, in=120] (14, 0.2);
	\node[font=\tiny] at (4+0.3,12-0.3) {$p_1$};
	\node[font=\tiny] at (5+0.3,12-0.3) {$p_2$};
	\node[font=\tiny] at (6+0.3,12-0.3) {$p_3$};
	\node[font=\tiny] at (7+0.3,12-0.3) {$p_4$};
	\node[font=\tiny] at (8+0.3,12-0.3) {$p_5$};
	\node[font=\tiny] at (9+0.3,12-0.3) {$p_6$};
	\node[font=\tiny] at (10+0.3,12-0.3) {$p_7$};
	\node[font=\tiny] at (11+0.3,12-0.3) {$p_8$};
	\node[font=\tiny] at (12+0.3,12-0.7) {$p_9$};
	\node[font=\tiny] at (3+0.3,0-0.8) {$q_1$};
	\node[font=\tiny] at (4+0.3,0-0.3) {$q_2$};
	\node[font=\tiny] at (5+0.3,0-0.3) {$q_3$};
	\node[font=\tiny] at (6+0.3,0-0.3) {$q_4$};
	\node[font=\tiny] at (7+0.3,0-0.3) {$q_5$};
	\node[font=\tiny] at (8+0.3,0-0.3) {$q_6$};
	\node[font=\tiny] at (9+0.3,0-0.3) {$q_7$};
	\node[font=\tiny] at (10+0.3,0-0.3) {$q_8$};
	\node[font=\tiny] at (11+0.3,0-0.3) {$q_9$};
	\fill [white] (0, 12) circle(1);
	\fill [white] (15, 12) circle(1);
	\fill [white] (0, 0) circle(1);
	\fill [white] (15, 0) circle(1);
	\end{tikzpicture}}
	\caption{Framed Heegaard diagram of the Weeks manifold}
	\label{framed_weeks}
\end{figure}
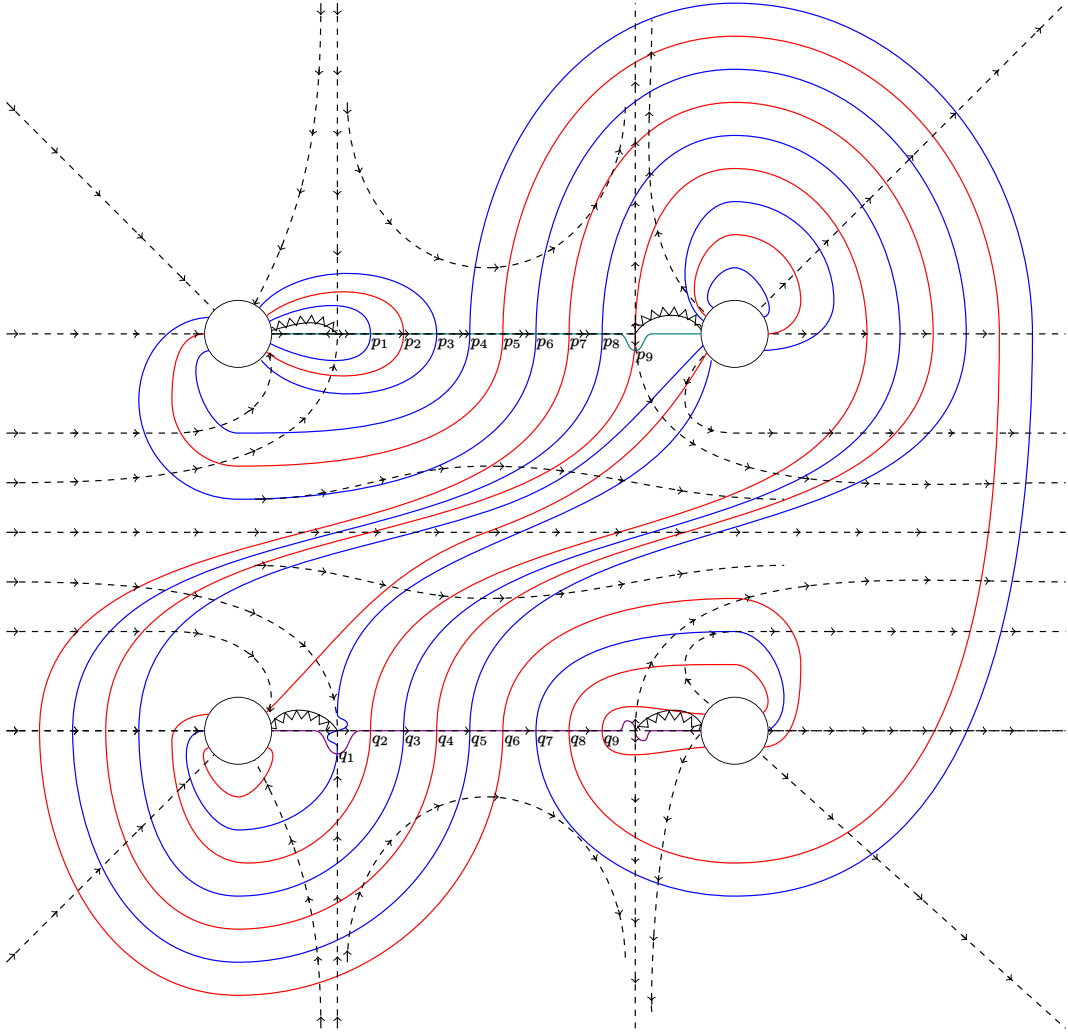

We set up a diagram framing $f_0$ for this Heegaard diagram as shown in Figure \ref{framed_weeks}. On this framed diagram, the upper curves in Figure \ref{weeks} have been deformed by isotopy moves that does not change the homeomorphism type of the 3-manifold. Starting from  their based points, the upper curves run towards to the right and the lower curves go downwards. In the following table we record the angles between the tangent of each circle and the vector fields, and the power of antipode $S$ labeled at each intersection points.  
Since all the Heegaard circles do not intersect with the twist fronts except at the base points, we only have the action of $S$ at each intersection point and the total rotations are: $\theta_{\eta_1}=\phi_{\eta_1}=\frac{1}{2}$, $\theta_{\eta_2}=\phi_{\eta_2}=\frac{1}{2}$, $\theta_{\mu_1}=-\phi_{\mu_1}=-\frac{1}{2}$ and $\theta_{\mu_2}=-\phi_{\mu_2}=\frac{1}{2}$.

\[
\begin{tabular}{|c|c|c|c|c|c|c|c|c|c|c|} 
	\hline 
	 & ~$p_1$~  & ~$p_2$~ & ~$p_3$~ & ~$p_4$~ & ~$p_5$~  & ~$p_6$~ & ~$p_7$~ & ~$p_8$~ & ~$p_9$~ \tabularnewline
	\hline 
	$\theta_\eta$ & ~$0$~  & ~$0$~ & ~$0$~ & ~$0$~ & ~$0$~  & ~$0$~ & ~$0$~ & ~$0$~ & ~$\frac{1}{4}$~   \tabularnewline
	\hline 
	$\theta_\mu$ & ~$\frac{3}{4}$~  & ~$\frac{1}{4}$~ & ~$\frac{3}{4}$~ & ~$\frac{7}{4}$~ & ~$\frac{5}{4}$~  & ~$\frac{7}{4}$~ & ~$\frac{3}{4}$~ & ~$\frac{3}{4}$~ & ~$0$~  \tabularnewline
	\hline 
	& ~$S^{-1}$~  & ~$S^0$~ & ~$S^{-1}$~ & ~$S^{-3}$~ & ~$S^{-2}$~  & ~$S^{-3}$~ & ~$S^{-1}$~ & ~$S^{-1}$~ & ~$S^1$~  \tabularnewline
	\hline
\end{tabular}
\]

\[
\begin{tabular}{|c|c|c|c|c|c|c|c|c|c|c|} 
	\hline 
	& ~$q_1$~  & ~$q_2$~ & ~$q_3$~ & ~$q_4$~ & ~$q_5$~  & ~$q_6$~ & ~$q_7$~ & ~$q_8$~ & ~$q_9$~ \tabularnewline
	\hline 
	$\theta_\eta$ & ~$-\frac{1}{4}$~  & ~$0$~ & ~$0$~ & ~$0$~ & ~$0$~  & ~$0$~ & ~$0$~ & ~$0$~ & ~$0$~ \tabularnewline
	\hline 
	$\theta_\mu$ & ~$\frac{1}{2}$~  & ~$\frac{1}{4}$~ & ~$\frac{1}{4}$~ & ~$\frac{1}{4}$~ & ~$\frac{5}{4}$~  & ~$\frac{5}{4}$~ & ~$\frac{3}{4}$~ & ~$\frac{1}{4}$~ & ~$\frac{1}{4}$~ \tabularnewline
	\hline 
	& ~$S^{-1}$~  & ~$S^0$~ & ~$S^0$~ & ~$S^0$~ & ~$S^{-2}$~  & ~$S^{-2}$~ & ~$S^{-1}$~ & ~$S^0$~ & ~$S^0$~ \tabularnewline
	\hline
\end{tabular}
\]

The order of intersection points on $\mu_1$ is: $q_1$, $q_7$, $p_4$, $p_1$, $q_3$, $p_8$, $q_5$, $p_6$ and $p_3$. And the order on $\mu_2$ is: $p_9$, $q_4$, $p_7$, $q_6$, $q_9$, $p_2$, $p_5$, $q_8$ and $q_2$. By the above data of rotation, the corresponding Kuperberg invariant is written as   
\[
\begin{aligned}
Z(W, f_0, H)=&\lambda S^{-1}(S^{-1}(\Lambda^2_{(1)})S^{-1}(\Lambda^2_{(7)})S^{-3}(\Lambda^1_{(4)})S^{-1}(\Lambda^1_{(1)})\Lambda^2_{(3)}S^{-1}(\Lambda^1_{(8)})S^{-2}(\Lambda^2_{(5)})S^{-3}(\Lambda^1_{(6)})\\
&S^{-1}(\Lambda^1_{(3)}))\lambda(S(\Lambda^1_{(9)})\Lambda^2_{(4)}S^{-1}(\Lambda^1_{(7)})S^{-2}(\Lambda^2_{(6)})\Lambda^2_{(9)}\Lambda^1_{(2)}S^{-2}(\Lambda^1_{(5)})\Lambda^2_{(8)}\Lambda^2_{(2)})\\
=&\lambda S^{-2}(\Lambda^1_{(3)}S^{-2}(\Lambda^1_{(6)})S^{-1}(\Lambda^2_{(5)})\Lambda^1_{(8)}S(\Lambda^2_{(3)})\Lambda^1_{(1)}S^{-2}(\Lambda^1_{(4)})\Lambda^2_{(7)}\Lambda^2_{(1)})\\
&\lambda(S(\Lambda^1_{(9)})\Lambda^2_{(4)}S^{-1}(\Lambda^1_{(7)})S^{-2}(\Lambda^2_{(6)})\Lambda^2_{(9)}\Lambda^1_{(2)}S^{-2}(\Lambda^1_{(5)})\Lambda^2_{(8)}\Lambda^2_{(2)})\\
=&\alpha(g^{-1})\lambda (\Lambda^1_{(3)}S^{-2}(\Lambda^1_{(6)})S^{-1}(\Lambda^2_{(5)})\Lambda^1_{(8)}S(\Lambda^2_{(3)})\Lambda^1_{(1)}S^{-2}(\Lambda^1_{(4)})\Lambda^2_{(7)}\Lambda^2_{(1)})\\
&\lambda(S(\Lambda^1_{(9)})\Lambda^2_{(4)}S^{-1}(\Lambda^1_{(7)})S^{-2}(\Lambda^2_{(6)})\Lambda^2_{(9)}\Lambda^1_{(2)}S^{-2}(\Lambda^1_{(5)})\Lambda^2_{(8)}\Lambda^2_{(2)})
\end{aligned}
\]

Since $\alpha(g)$ is a gauge invariant \cite{Shi15}, we just need to verify the gauge invariance for $Z(H):=\alpha(g)Z(W, f_0, H)$.
\begin{equation}\label{eq:ZH}
\begin{aligned}
	Z(H)=&\lambda (\Lambda^1_{(3)}S^{-2}(\Lambda^1_{(6)})S^{-1}(\Lambda^2_{(5)})\Lambda^1_{(8)}S(\Lambda^2_{(3)})\Lambda^1_{(1)}S^{-2}(\Lambda^1_{(4)})\Lambda^2_{(7)}\Lambda^2_{(1)})\\
	&\lambda(S(\Lambda^1_{(9)})\Lambda^2_{(4)}S^{-1}(\Lambda^1_{(7)})S^{-2}(\Lambda^2_{(6)})\Lambda^2_{(9)}\Lambda^1_{(2)}S^{-2}(\Lambda^1_{(5)})\Lambda^2_{(8)}\Lambda^2_{(2)})
\end{aligned}
\end{equation}
Note that for a linear map $X\in\mathrm{End}(H)$, its trace is computed in terms of integrals by Theorem \ref{integral}(2). 
Then $Z$ can written as a trace of some linear map on $H\otimes H$. 
\[
\begin{aligned}
Z(H)=\mathrm{Tr}((S\otimes S)\circ P_2),
\end{aligned}
\]
where $P_2\in \mathrm{End}(H\otimes H)$ is
\[
\begin{aligned}
	P_2(x\otimes y)=&S^{-2}(x_{(3)})S^{-4}(x_{(6)})S^{-3}(y_{(4)})S^{-2}(x_{(8)})S^{-1}(y_{(2)})S^{-2}(x_{(1)})S^{-4}(x_{(4)})S^{-2}(y_{(6)})\\
	&\otimes S^{-1}(y_{(1)})S^{-1}(x_{(7)})S^{-3}(x_{(5)})S^{-1}(x_{(2)})S^{-1}(y_{(8)})S^{-3}(y_{(5)})S^{-2}(x_{(7)})S^{-1}(y_{(3)}).
\end{aligned}
\]

Since $\Lambda^1\otimes\Lambda^2$ and $\lambda\otimes \lambda$ is a pair of normalized integrals of $H\otimes H$ and its dual, we can rewrite $Z$ in the following form using the trace formula:
\[
\begin{aligned}
	Z(H)=&\lambda S (S^{-1}(\Lambda^2_{(1)})S^{-1}(\Lambda^2_{(7)})S^{-3}(\Lambda^1_{(5)})S^{-1}(\Lambda^1_{(2)})S^{-1}(\Lambda^2_{(8)})S^{-3}(\Lambda^2_{(5)})S^{-2}(\Lambda^1_{(7)})S^{-1}(\Lambda^2_{(3)})\Lambda^1_{(9)})\\
	&\lambda S(S^{-2}(\Lambda^1_{(3)})S^{-4}(\Lambda^1_{(6)})S^{-3}(\Lambda^2_{(4)})S^{-2}(\Lambda^1_{(8)})S^{-1}(\Lambda^2_{(2)})S^{-2}(\Lambda^1_{(1)})S^{-4}(\Lambda^1_{(4)})S^{-2}(\Lambda^2_{(6)})\Lambda^2_{(9)})\,.
\end{aligned}
\]

In the following, we show that $Z(H)$ remains unchanged under gauge transformation. In fact, after gauge transformation, by Theorem \ref{cocycle}(10), we have that

\[
\begin{aligned}
	Z_F:=&Z(H_F)=\mathrm{Tr}((S_F\otimes S_F)\circ P_{2F})=\lambda(S(\Lambda^1_{(2)}\otimes\Lambda^2_{(2)})(S_F\otimes S_F)\circ P_{2F}(\Lambda^1_{(1)}\otimes\Lambda^2_{(1)}))\\
	=&\lambda S (d^{[1]}_iS^{-1}_F(f^{[1]}_p\Lambda^2_{(1)}d^{[2]}_q)S^{-1}_F(f^{[7]}_p\Lambda^2_{(7)}d^{[8]}_q)S^{-3}_F(f^{[5]}_i\Lambda^1_{(5)}d^{[6]}_j)S^{-1}_F(f^{[2]}_i\Lambda^1_{(2)}d^{[3]}_j)S^{-1}_F(f^{[8]}_p\Lambda^2_{(8)}d^{[9]}_q)\\
	&S^{-3}_F(f^{[5]}_p\Lambda^2_{(5)}d^{[6]}_q)S^{-2}_F(f^{[7]}_i\Lambda^1_{(7)}d^{[8]}_j)S^{-1}_F(f^{[3]}_p\Lambda^2_{(3)}d^{[4]}_q)f^{[9]}_i\Lambda^1_{(9)})\\
	&\lambda S(d^{[1]}_qS^{-2}_F(f^{[3]}_i\Lambda^1_{(3)}d^{[4]}_j)S^{-4}_F(f^{[6]}_i\Lambda^1_{(6)}d^{[7]}_j)S^{-3}_F(f^{[4]}_p\Lambda^2_{(4)}d^{[5]}_q)S^{-2}_F(f^{[8]}_i\Lambda^1_{(8)}d^{[9]}_q)S^{-1}_F(f^{[2]}_p\Lambda^2_{(2)}d^{[3]}_q)\\
	&S^{-2}_F(f^{[1]}_i\Lambda^1_{(1)}d^{[2]}_j)S^{-4}_F(f^{[4]}_i\Lambda^1_{(4)}d^{[5]}_j)S^{-2}_F(f^{[2]}_p\Lambda^2_{(6)}d^{[7]}_q)f^{[9]}_p\Lambda^2_{(9)})
\end{aligned}
\]

We apply Theorem \ref{integral}(3) to $\lambda S(d^{[1]}_i\cdots\Lambda^1_{(9)})$ and $S(d^{[1]}_q\cdots\Lambda^2_{(9)})$, then by Proposition \ref{cocycle}(3) and (4) we have the following equality. Here we plug in $S^{-1}_F(x)=S^{-1}(u^{-1}xu)$ as well.

\[
\begin{aligned}
Z_F=&\lambda S (f^{[8]}_pS^{-1}(\Lambda^2_{(1)})d^{[8]}_qf^{[2]}_pS^{-1}(\Lambda^2_{(7)})d^{[2]}_qS^{-2}_F(f^{[4]}_iS^{-1}(\Lambda^1_{(5)})d^{[4]}_j)f^{[7]}_iS^{-1}(\Lambda^1_{(2)})d^{[7]}_jf^{[1]}_pS^{-1}(\Lambda^2_{(8)})d^{[1]}_q\\
&S^{-2}_F(f^{[4]}_pS^{-1}(\Lambda^2_{(5)})d^{[4]}_q)S^{-1}_F(f^{[2]}_iS^{-1}(\Lambda^1_{(7)})d^{[2]}_j)f^{[6]}_pS^{-1}(\Lambda^2_{(3)})d^{[6]}_q\Lambda^1_{(9)})\\
&\lambda S(S^{-1}_F(f^{[6]}_iS^{-1}(\Lambda^1_{(3)})d^{[6]}_j)S^{-3}_F(f^{[3]}_iS^{-1}(\Lambda^{1}_{(6)})d^{[3]}_j)S^{-2}_F(f^{[5]}_pS^{-1}(\Lambda^2_{(4)})d^{[5]}_q)S^{-1}_F(f^{[1]}_iS^{-1}(\Lambda^1_{(8)})d^{[1]}_j)\\
&f^{[7]}_pS^{-1}(\Lambda^2_{(2)})d^{[7]}_qS^{-1}_F(f^{[8]}_iS^{-1}(\Lambda^1_{(1)})d^{[8]}_j)S^{-3}_F(f^{[5]}_iS^{-1}(\Lambda^1_{(4)})d^{[5]}_j)S^{-1}_F(f^{[3]}_pS^{-1}(\Lambda^2_{(6)})d^{[3]}_q)\Lambda^2_{(9)})\\
\end{aligned}
\]

Apply Theorem \ref{integral}(2) to $d^{[6]}_q\Lambda^1_{(9)}$ and by
\[
\begin{aligned}
d^{[1]}_q\otimes\cdots\otimes d^{[5]}_q\otimes d^{[6]}_{q(1)}d^{[1]}_j\otimes\cdots\otimes d^{[6]}_{q(8)}d^{[8]}_j\otimes d^{[7]}_q\otimes\cdots\otimes d^{[8]}_q=d^{[1]}_q \otimes\cdots\otimes d^{[15]}_q,
\end{aligned}
\]
then we get that
\[
\begin{aligned}
	Z_F=&\lambda S (f^{[8]}_pS^{-1}(\Lambda^2_{(1)})d^{[15]}_qf^{[2]}_pS^{-1}(\Lambda^2_{(7)})d^{[2]}_qS^{-2}_F(f^{[4]}_iS^{-1}(\Lambda^1_{(5)})d^{[9]}_q)f^{[7]}_iS^{-1}(\Lambda^1_{(2)})d^{[12]}_qf^{[1]}_pS^{-1}(\Lambda^2_{(8)})d^{[1]}_q\\
	&S^{-2}_F(f^{[4]}_pS^{-1}(\Lambda^2_{(5)})d^{[4]}_q)S^{-1}_F(f^{[2]}_iS^{-1}(\Lambda^1_{(7)})d^{[7]}_q)f^{[6]}_pS^{-1}(\Lambda^2_{(3)})\Lambda^1_{(9)})\\
	&\lambda S(S^{-1}_F(f^{[6]}_iS^{-1}(\Lambda^1_{(3)})d^{[11]}_q)S^{-3}_F(f^{[3]}_iS^{-1}(\Lambda^{1}_{(6)})d^{[8]}_q)S^{-2}_F(f^{[5]}_pS^{-1}(\Lambda^2_{(4)})d^{[5]}_q)S^{-1}_F(f^{[1]}_iS^{-1}(\Lambda^1_{(8)})d^{[6]}_q)\\
	&f^{[7]}_pS^{-1}(\Lambda^2_{(2)})d^{[14]}_qS^{-1}_F(f^{[8]}_iS^{-1}(\Lambda^1_{(1)})d^{[13]}_q)S^{-3}_F(f^{[5]}_iS^{-1}(\Lambda^1_{(4)})d^{[10]}_q)S^{-1}_F(f^{[3]}_pS^{-1}(\Lambda^2_{(6)})d^{[3]}_q)\Lambda^2_{(9)})\\
\end{aligned}
\]
Similarly, by $f^{[1]}_p\otimes\cdots\otimes f^{[7]}_p\otimes f^{[1]}_jf^{[8]}_{p(1)}\otimes\cdots\otimes f^{[8]}_jf^{[8]}_{p(8)}=f^{[1]}_p \otimes\cdots\otimes f^{[15]}_p$,
we get that
\[
\begin{aligned}
	Z_F=&\lambda S (S^{-1}(\Lambda^2_{(1)})d^{[15]}_qf^{[2]}_pS^{-1}(\Lambda^2_{(7)})d^{[2]}_qS^{-2}_F(f^{[11]}_pS^{-1}(\Lambda^1_{(5)})d^{[9]}_q)f^{[14]}_pS^{-1}(\Lambda^1_{(2)})d^{[12]}_qf^{[1]}_pS^{-1}(\Lambda^2_{(8)})d^{[1]}_q\\
	&S^{-2}_F(f^{[4]}_pS^{-1}(\Lambda^2_{(5)})d^{[4]}_q)S^{-1}_F(f^{[9]}_pS^{-1}(\Lambda^1_{(7)})d^{[7]}_q)f^{[6]}_pS^{-1}(\Lambda^2_{(3)})\Lambda^1_{(9)})\\
	&\lambda S(S^{-1}_F(f^{[13]}_pS^{-1}(\Lambda^1_{(3)})d^{[11]}_q)S^{-3}_F(f^{[10]}_pS^{-1}(\Lambda^{1}_{(6)})d^{[8]}_q)S^{-2}_F(f^{[5]}_pS^{-1}(\Lambda^2_{(4)})d^{[5]}_q)S^{-1}_F(f^{[8]}_pS^{-1}(\Lambda^1_{(8)})d^{[6]}_q)\\
	&f^{[7]}_pS^{-1}(\Lambda^2_{(2)})d^{[14]}_qS^{-1}_F(f^{[15]}_pS^{-1}(\Lambda^1_{(1)})d^{[13]}_q)S^{-3}_F(f^{[12]}_pS^{-1}(\Lambda^1_{(4)})d^{[10]}_q)S^{-1}_F(f^{[3]}_pS^{-1}(\Lambda^2_{(6)})d^{[3]}_q)\Lambda^2_{(9)})\\
\end{aligned}
\]
Plug in $S^{-1}_F(x)=S^{-1}(u)S^{-1}(x)S^{-1}(u^{-1})$, $S^{-2}_F(x)=S^{-1}(u)S^{-2}(u^{-1})S^{-2}(x)S^{-2}(u)S^{-1}(u^{-1})$,
\[\begin{aligned}
	S^{-3}_F(x)=S^{-1}(u)S^{-2}(u^{-1})S^{-3}(u)S^{-3}(x)S^{-3}(u^{-1})S^{-2}(u)S^{-1}(u^{-1}).
\end{aligned}\]

\[
\begin{aligned}
Z_F=&\lambda S (S^{-1}(\Lambda^2_{(1)})d^{[15]}_qf^{[2]}_pS^{-1}(\Lambda^2_{(7)})d^{[2]}_qS^{-1}(u)S^{-2}(u^{-1})S^{-2}(f^{[11]}_p)S^{-3}(\Lambda^1_{(5)})S^{-2}(d^{[9]}_q)S^{-1}(u)\\
&S^{-2}(u^{-1})f^{[14]}_pS^{-1}(\Lambda^1_{(2)})d^{[12]}_qf^{[1]}_pS^{-1}(\Lambda^2_{(8)})d^{[1]}_qS^{-1}(u)S^{-2}(u^{-1})S^{-2}(f^{[4]}_p)S^{-3}(\Lambda^2_{(5)})S^{-2}(d^{[4]}_q)\\
&S^{-2}(u)S^{-1}(d^{[7]}_q)S^{-2}(\Lambda^1_{(7)})S^{-1}(f^{[9]}_p)S^{-1}(u^{-1})f^{[6]}_pS^{-1}(\Lambda^2_{(3)})\Lambda^1_{(9)})\\
&\lambda S(S^{-1}(u)S^{-1}(d^{[11]}_q)S^{-2}(\Lambda^1_{(3)})S^{-1}(f^{[13]}_p)S^{-2}(u^{-1})S^{-3}(u)S^{-3}(d^{[8]}_q)S^{-4}(\Lambda^{1}_{(6)})S^{-3}(f^{[10]}_p)\\
&S^{-3}(u^{-1})S^{-2}(f^{[5]}_p)S^{-3}(\Lambda^2_{(4)})S^{-2}(d^{[5]}_q)S^{-2}(u)S^{-1}(d^{[6]}_q)S^{-2}(\Lambda^1_{(8)})S^{-1}(f^{[8]}_p)S^{-1}(u^{-1})f^{[7]}_p\\
&S^{-1}(\Lambda^2_{(2)})d^{[14]}_qS^{-1}(u)S^{-1}(d^{[13]}_q)S^{-2}(\Lambda^1_{(1)})S^{-1}(f^{[15]}_p)S^{-2}(u^{-1})S^{-3}(u)S^{-3}(d^{[10]}_q)S^{-4}(\Lambda^1_{(4)})\\
&S^{-3}(f^{[12]}_p)S^{-3}(u^{-1})S^{-2}(u)S^{-1}(d^{[3]}_q)S^{-2}(\Lambda^2_{(6)})S^{-1}(f^{[3]}_p)S^{-1}(u^{-1})\Lambda^2_{(9)})\,.
\end{aligned}
\]

In order to compare $Z(H)$ with $Z_F$, we need the following lemmas to help us further simplify $Z_F$.

\begin{lemma}\label{lemma1}
For any $x\in H$ and $Y, Z\in \mathrm{End}(H)$, \\
$(1)$ \[\begin{aligned}
&\lambda S (S^{-1}(\Lambda^2_{(1)})Y(\Lambda^2_{(7)},\Lambda^1_{(5)},\Lambda^1_{(2)},\Lambda^2_{(8)},\Lambda^2_{(5)},\Lambda^1_{(7)},\Lambda^2_{(3)})\Lambda^1_{(9)})\\
&\lambda S(xZ(\Lambda^1_{(3)},\Lambda^{1}_{(6)},\Lambda^2_{(4)},\Lambda^1_{(8)},\Lambda^2_{(2)},\Lambda^1_{(1)},\Lambda^1_{(4)},\Lambda^2_{(6)})\Lambda^2_{(9)})\\
=&\lambda S (S^{-1}(\Lambda^2_{(1)})Y(\Lambda^2_{(7)}S(x_{(2)}),\Lambda^1_{(5)}S(x_{(11)}),\Lambda^1_{(2)}S(x_{(14)}),\Lambda^2_{(8)}S(x_{(1)}),\Lambda^2_{(5)}S(x_{(4)}),\Lambda^1_{(7)}S(x_{(9)}),\\
&\Lambda^2_{(3)}S(x_{(6)}))\Lambda^1_{(9)})\\
&\lambda S(Z(\Lambda^1_{(3)}S(x_{(13)}),\Lambda^{1}_{(6)}S(x_{(10)}),\Lambda^2_{(4)}S(x_{(5)}),\Lambda^1_{(8)}S(x_{(8)}),\Lambda^2_{(2)}S(x_{(7)}),\Lambda^1_{(1)}S(x_{(15)}),\Lambda^1_{(4)}\\
&S(x_{(12)}),\Lambda^2_{(6)}S(x_{(3)}))\Lambda^2_{(9)})\\
\end{aligned}\]
$(2)$ \[\begin{aligned}
	&\lambda S (Y(\Lambda^2_{(1)},\Lambda^2_{(7)},\Lambda^1_{(5)},\Lambda^1_{(2)},\Lambda^2_{(8)},\Lambda^2_{(5)},\Lambda^1_{(7)})S^{-1}(\Lambda^2_{(3)})\Lambda^1_{(9)})\\
	&\lambda S(Y(\Lambda^1_{(3)},\Lambda^{1}_{(6)},\Lambda^2_{(4)},\Lambda^1_{(8)},\Lambda^2_{(2)},\Lambda^1_{(1)},\Lambda^1_{(4)},\Lambda^2_{(6)})S^{-1}(x)\Lambda^2_{(9)})\\
	=&\lambda S (Y(x_{(1)}\Lambda^2_{(1)},x_{(14)}\Lambda^2_{(7)},x_{(7)}\Lambda^1_{(5)},x_{(4)}\Lambda^1_{(2)},x_{(15)}\Lambda^2_{(8)},x_{(12)}\Lambda^2_{(5)},x_{(9)}\Lambda^1_{(7)}S^{-1}(\Lambda^2_{(3)})\Lambda^1_{(9)})\\
	&\lambda S(Z(x_{(5)}\Lambda^1_{(3)},x_{(8)}\Lambda^{1}_{(6)},x_{(11)}\Lambda^2_{(4)},x_{(10)}\Lambda^1_{(8)},x_{(2)}\Lambda^2_{(2)},x_{(3)}\Lambda^1_{(1)},x_{(6)}\Lambda^1_{(4)},x_{(13)}\Lambda^2_{(6)})\Lambda^2_{(9)})\\
\end{aligned}\]
\end{lemma}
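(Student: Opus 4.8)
The plan is to prove Lemma~\ref{lemma1} as an instance of a single redistribution principle for the normalized integral: an element occupying one site of a trace-type word assembled from the coproduct components of $\Lambda$ may be dissolved, its own coproduct components --- each carrying an antipode --- being spread over the remaining sites. The engine is Theorem~\ref{integral}(1); applying $\mathrm{id}^{\otimes(j-1)}\otimes\Delta\otimes\mathrm{id}^{\otimes(n-j)}$ to it repeatedly gives the iterated form
\[
S(a)\Lambda_{(1)}\otimes\Lambda_{(2)}\otimes\cdots\otimes\Lambda_{(n)}=\Lambda_{(1)}\otimes a_{(1)}\Lambda_{(2)}\otimes\cdots\otimes a_{(n-1)}\Lambda_{(n)},
\]
which, fed through $\lambda S(-)$ together with Theorem~\ref{integral}(3), upgrades the latter to a ``moving'' identity
\[
\lambda S\big(aX(\Lambda_{(1)},\ldots,\Lambda_{(n-1)})\Lambda_{(n)}\big)=\lambda S\big(X(\Lambda_{(1)}S(a_{(n-1)}),\ldots,\Lambda_{(n-1)}S(a_{(1)}))\Lambda_{(n)}\big)
\]
valid for every linear map $X\colon H^{\otimes(n-1)}\to H$; the mirror statements, for the right integral $\Lambda^{R}=S(\Lambda)$ with sites filled from the left, follow by applying $S$ throughout.

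First I would normalize the left-hand sides with $S\circ S^{-1}=\mathrm{id}$, so that in part~(1) the element $x$ sits at the leading end of its $\lambda S$-factor, while $S^{-1}(\Lambda^2_{(1)})$ is the leading site of the other factor and $\Lambda^1_{(9)}$, $\Lambda^2_{(9)}$ are the two trailing sites; in part~(2), $S^{-1}(x)$ sits at the trailing end instead. Since $Y$ and $Z$ are arbitrary, the asserted equality is equivalent to a tensor identity in $\Delta^{9}(\Lambda^{1})$, $\Delta^{9}(\Lambda^{2})$ and $x_{(1)},\ldots,x_{(15)}$ after applying $\lambda\otimes\lambda$ to the resulting element of $H\otimes H$. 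I would then transport $x$ from its single site by the moving identity above, carried out once along the copy $\Lambda^{2}$ and once along the copy $\Lambda^{1}$ --- these carry independent coproducts, being the two tensor factors of the integral $\Lambda^{1}\otimes\Lambda^{2}$ of $H\otimes H$ --- so that each occupied component $\Lambda^{?}_{(k)}$ acquires a factor $S(x_{(?)})$ on the appropriate side. Within a given $\Delta^{n}$ the index attached at component $k$ follows the pattern ``$k\mapsto$ the $(n-k)$-th available coproduct factor of $x$'', and concatenating the two runs in the order in which the components occur in the displayed words reproduces precisely the assignment recorded in the statement. Part~(2) runs the same way with ``leading'' and ``trailing'', and left and right, interchanged.

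The genuinely delicate point, and the main obstacle, is the bookkeeping. One must keep exact track of the cyclic order of the eighteen integral components together with $x_{(1)},\ldots,x_{(15)}$ in each of the two trace words; check that the powers of $S$ already present at the various sites and the two terminal $S^{-1}$'s interact correctly with each application of the moving identity --- in particular, that after sliding through $\Lambda^{2}$ the leftover part of $x$ is left in the conjugated form needed to then slide through $\Lambda^{1}$; and verify that the permutation of the indices of $\Delta^{15}(x)$ so produced is the one displayed. I would organize this by processing the sites in word order, recording at each step which coproduct factor of $x$ is deposited; once the two orders (along $\Lambda^{1}$ and along $\Lambda^{2}$) are pinned down, the remaining comparison with the statement is a finite, if lengthy, check.
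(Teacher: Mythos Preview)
Your proposal is correct and follows essentially the same route as the paper: for part~(1) apply Theorem~\ref{integral}(3) to $\lambda S(x\cdots\Lambda^2_{(9)})$, which redistributes $x$ over the $\Lambda^2$-components and deposits $x_{(8)}$ at the front of the other factor (via $S^{-1}(\Lambda^2_{(1)}S(x_{(8)}))=x_{(8)}S^{-1}(\Lambda^2_{(1)})$), then apply Theorem~\ref{integral}(3) again to $\lambda S(x_{(8)}\cdots\Lambda^1_{(9)})$. For part~(2) the paper simply invokes Theorem~\ref{integral}(1) twice rather than a right-integral mirror; iterating Theorem~\ref{integral}(1) via $\Delta\otimes\mathrm{id}$ already gives $\Lambda_{(1)}\otimes\cdots\otimes a\Lambda_{(n)}=S(a_{(n-1)})\Lambda_{(1)}\otimes\cdots\otimes S(a_{(1)})\Lambda_{(n-1)}\otimes\Lambda_{(n)}$, which with $a=S^{-1}(x)$ places $x_{(k)}$ on the left of each site directly, so your detour through $\Lambda^R$ is unnecessary though not wrong.
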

\begin{proof}
Apply Theorem \ref{integral}(3) to $\lambda S(x\cdots \Lambda^2_{(9)})$.
 \[\begin{aligned}
	&\lambda S (S^{-1}(\Lambda^2_{(1)})Y(\Lambda^2_{(7)},\Lambda^1_{(5)},\Lambda^1_{(2)},\Lambda^2_{(8)},\Lambda^2_{(5)},\Lambda^1_{(7)})S^{-1}(\Lambda^2_{(3)})\Lambda^1_{(9)})\\
	&\lambda S(xZ(\Lambda^1_{(3)},\Lambda^{1}_{(6)},\Lambda^2_{(4)},\Lambda^1_{(8)},\Lambda^2_{(2)},\Lambda^1_{(1)},\Lambda^1_{(4)},\Lambda^2_{(6)})\Lambda^2_{(9)})\\
	=&\lambda S (x_{(8)}S^{-1}(\Lambda^2_{(1)})Y(\Lambda^2_{(7)}S(x_{(2)}),\Lambda^1_{(5)},\Lambda^1_{(2)},\Lambda^2_{(8)}S(x_{(1)}),\Lambda^2_{(5)}S(x_{(4)}),\Lambda^1_{(7)},\Lambda^2_{(3)}S(x_{(6)}))\Lambda^1_{(9)})\\
	&\lambda S(Z(\Lambda^1_{(3)},\Lambda^{1}_{(6)},\Lambda^2_{(4)}S(x_{(5)}),\Lambda^1_{(8)},\Lambda^2_{(2)}S(x_{(7)}),\Lambda^1_{(1)},\Lambda^1_{(4)},\Lambda^2_{(6)}S(x_{(3)}))\Lambda^2_{(9)})\\
\end{aligned}\]	
Continue to apply Theorem \ref{integral}(3) to $\lambda S(x_{(8)}\cdots \Lambda^1_{(9)})$, then we get $(1)$. Similarly we can obtain $(2)$ by using Theorem \ref{integral}(1) twice.
\end{proof}

Plug $S^{-1}(u)=f^{[2]}_rS^{-1}(f^{[1]}_r)$ into $\lambda S(S^{-1}(u)\cdots\Lambda^2_{(9)})$ then we obtain the following equality by Lemma \ref{lemma1}(1).
\[
\begin{aligned}
Z_F
=&\lambda S (S^{-1}(\Lambda^2_{(1)})d^{[15]}_qf^{[2]}_pf^{[2]}_{r(2)}S^{-1}(\Lambda^2_{(7)})d^{[2]}_qS^{-1}(u)S^{-2}(u^{-1})S^{-2}(f^{[11]}_p)S^{-2}(f^{[2]}_{r(11)})S^{-3}(\Lambda^1_{(5)})\\
&S^{-2}(d^{[9]}_q)S^{-2}(u)S^{-1}(u^{-1})f^{[14]}_pf^{[2]}_{r(14)}S^{-1}(\Lambda^1_{(2)})d^{[12]}_qf^{[1]}_pf^{[2]}_{r(1)}S^{-1}(\Lambda^2_{(8)})d^{[1]}_qS^{-1}(u)S^{-2}(u^{-1})\\
&S^{-2}(f^{[4]}_p)S^{-2}(f^{[2]}_{r(4)})S^{-3}(\Lambda^2_{(5)})S^{-2}(d^{[4]}_q)S^{-2}(u)S^{-1}(d^{[7]}_q)S^{-2}(\Lambda^1_{(7)})S^{-1}(f^{[2]}_{r(9)})S^{-1}(f^{[9]}_p)\\
&S^{-1}(u^{-1})f^{[6]}_pf^{[2]}_{r(6)}S^{-1}(\Lambda^2_{(3)})\Lambda^1_{(9)})\\
&\lambda S(S^{-1}(f^{[1]}_r)S^{-1}(d^{[11]}_q)S^{-2}(\Lambda^1_{(3)})S^{-1}(f^{[2]}_{r(13)})S^{-1}(f^{[13]}_p)S^{-2}(u^{-1})S^{-3}(u)S^{-3}(d^{[8]}_q)S^{-4}(\Lambda^{1}_{(6)})\\
&S^{-3}(f^{[2]}_{r(10)})S^{-3}(f^{[10]}_p)S^{-3}(u^{-1})S^{-2}(f^{[5]}_p)S^{-2}(f^{[2]}_{r(5)})S^{-3}(\Lambda^2_{(4)})S^{-2}(d^{[5]}_q)S^{-2}(u)S^{-1}(d^{[6]}_q)\\
&S^{-2}(\Lambda^1_{(8)})S^{-1}(f^{[2]}_{r(8)})S^{-1}(f^{[8]}_p)S^{-1}(u^{-1})f^{[7]}_pf^{[2]}_{r(7)}S^{-1}(\Lambda^2_{(2)})d^{[14]}_qS^{-1}(u)S^{-1}(d^{[13]}_q)S^{-2}(\Lambda^1_{(1)})\\
&S^{-1}(f^{[2]}_{r(15)})S^{-1}(f^{[15]}_p)S^{-2}(u^{-1})S^{-3}(u)S^{-3}(d^{[10]}_q)S^{-4}(\Lambda^1_{(4)})S^{-3}(f^{[2]}_{r(12)})S^{-3}(f^{[12]}_p)S^{-3}(u^{-1})\\
&S^{-2}(u)S^{-1}(d^{[3]}_q)S^{-2}(\Lambda^2_{(6)})S^{-1}(f^{[2]}_{r(3)})S^{-1}(f^{[3]}_p)S^{-1}(u^{-1})\Lambda^2_{(9)})\,.
\end{aligned}\]
Then we have 
\[\begin{aligned}
Z_F=&\lambda S (S^{-1}(\Lambda^2_{(1)})d^{[15]}_qf^{[3]}_pS^{-1}(\Lambda^2_{(7)})d^{[2]}_qS^{-1}(u)S^{-2}(u^{-1})S^{-2}(f^{[12]}_p)S^{-3}(\Lambda^1_{(5)})S^{-2}(d^{[9]}_q)S^{-2}(u)\\
&S^{-1}(u^{-1})f^{[15]}_pS^{-1}(\Lambda^1_{(2)})d^{[12]}_qf^{[2]}_pS^{-1}(\Lambda^2_{(8)})d^{[1]}_qS^{-1}(u)S^{-2}(u^{-1})S^{-2}(f^{[5]}_p)S^{-3}(\Lambda^2_{(5)})S^{-2}(d^{[4]}_q)\\
&S^{-2}(u)S^{-1}(d^{[7]}_q)S^{-2}(\Lambda^1_{(7)})S^{-1}(f^{[10]}_p)S^{-1}(u^{-1})f^{[7]}_pS^{-1}(\Lambda^2_{(3)})\Lambda^1_{(9)})\\
&\lambda S(S^{-1}(f^{[1]}_p)S^{-1}(d^{[11]}_q)S^{-2}(\Lambda^1_{(3)})S^{-1}(f^{[14]}_p)S^{-2}(u^{-1})S^{-3}(u)S^{-3}(d^{[8]}_q)S^{-4}(\Lambda^{1}_{(6)})S^{-3}(f^{[11]}_p)\\
&S^{-3}(u^{-1})S^{-2}(f^{[6]}_p)S^{-3}(\Lambda^2_{(4)})S^{-2}(d^{[5]}_q)S^{-2}(u)S^{-1}(d^{[6]}_q)S^{-2}(\Lambda^1_{(8)})S^{-1}(f^{[9]}_p)S^{-1}(u^{-1})f^{[8]}_p\\
&S^{-1}(\Lambda^2_{(2)})d^{[14]}_qS^{-1}(u)S^{-1}(d^{[13]}_q)S^{-2}(\Lambda^1_{(1)})S^{-1}(f^{[16]}_p)S^{-2}(u^{-1})S^{-3}(u)S^{-3}(d^{[10]}_q)S^{-4}(\Lambda^1_{(4)})\\
&S^{-3}(f^{[13]}_p)S^{-3}(u^{-1})S^{-2}(u)S^{-1}(d^{[3]}_q)S^{-2}(\Lambda^2_{(6)})S^{-1}(f^{[4]}_p)S^{-1}(u^{-1})\Lambda^2_{(9)})\\
=&\lambda S (S^{-1}(\Lambda^2_{(1)})d^{[9]}_qf^{[3]}_pS^{-1}(\Lambda^2_{(7)})d^{[2]}_qS^{-1}(u)S^{-2}(u^{-1})S^{-2}(f^{[6]}_p)S^{-3}(\Lambda^1_{(5)})S^{-2}(d^{[5]}_q)S^{-2}(u)\\
&S^{-1}(u^{-1})f^{[9]}_pS^{-1}(\Lambda^1_{(2)})d^{[8]}_qf^{[1]}_pS^{-1}(\Lambda^2_{(8)})d^{[1]}_qS^{-1}(u)S^{-2}(u^{-1})S^{-2}(f^{[5]}_p)S^{-3}(\Lambda^2_{(5)})S^{-2}(\Lambda^1_{(7)})\\
&S^{-1}(\Lambda^2_{(3)})\Lambda^1_{(9)})\\
&\lambda S(S^{-1}(f^{[1]}_p)S^{-1}(d^{[7]}_q)S^{-2}(\Lambda^1_{(3)})S^{-1}(f^{[8]}_p)S^{-2}(u^{-1})S^{-3}(u)S^{-3}(d^{[4]}_q)S^{-4}(\Lambda^{1}_{(6)})S^{-3}(\Lambda^2_{(4)})\\
&S^{-2}(\Lambda^1_{(8)})S^{-1}(\Lambda^2_{(2)})S^{-2}(\Lambda^1_{(1)})S^{-1}(f^{[10]}_p)S^{-2}(u^{-1})S^{-3}(u)S^{-3}(d^{[6]}_q)S^{-4}(\Lambda^1_{(4)})S^{-3}(f^{[7]}_p)\\
&S^{-3}(u^{-1})S^{-2}(u)S^{-1}(d^{[3]}_q)S^{-2}(\Lambda^2_{(6)})S^{-1}(f^{[4]}_p)S^{-1}(u^{-1})\Lambda^2_{(9)})\,.
\end{aligned}
\]
In the last equality of the above equations, Proposition \ref{cocycle}(6) is used. Now we apply Theorem \ref{integral}(1) twice to $S^{-1}(f^{[3]}_p)S^{-1}(u^{-1})\Lambda^2_{(9)}$ and obtain
\[
\begin{aligned}
	Z_F=&\lambda S (S^{-1}(\Lambda^2_{(1)})S^{-1}(u^{-1}_{(1)})d^{[9]}_qf^{[3]}_pS^{-1}(\Lambda^2_{(7)})S^{-1}(u^{-1}_{(14)})d^{[2]}_qS^{-1}(u)S^{-2}(u^{-1})S^{-2}(f^{[6]}_p)S^{-3}(\Lambda^1_{(5)})\\
	&S^{-3}(u^{-1}_{(7)})S^{-2}(d^{[5]}_q)S^{-2}(u)S^{-1}(u^{-1})f^{[9]}_pS^{-1}(\Lambda^1_{(2)})S^{-1}(u^{-1}_{(4)})d^{[8]}_qf^{[2]}_pS^{-1}(\Lambda^2_{(8)})S^{-1}(u^{-1}_{(15)})d^{[1]}_q\\
	&S^{-1}(u)S^{-2}(u^{-1})S^{-2}(f^{[5]}_p)S^{-3}(\Lambda^2_{(5)})S^{-3}(u^{-1}_{(12)})S^{-2}(u^{-1}_{(9)})S^{-2}(\Lambda^1_{(7)})S^{-1}(\Lambda^2_{(3)})\Lambda^1_{(9)})\\
	&\lambda S(S^{-1}(f^{[1]}_p)S^{-1}(d^{[7]}_q)S^{-2}(u^{-1}_{(5)})S^{-2}(\Lambda^1_{(3)})S^{-1}(f^{[8]}_p)S^{-2}(u^{-1})S^{-3}(u)S^{-3}(d^{[4]}_q)S^{-4}(u^{-1}_{(8)})\\
	&S^{-4}(\Lambda^{1}_{(6)})S^{-3}(\Lambda^2_{(4)})S^{-3}(u^{-1}_{(11)})S^{-2}(u^{-1}_{(10)})S^{-2}(\Lambda^1_{(8)})S^{-1}(\Lambda^2_{(2)})S^{-1}(u^{-1}_{(2)})S^{-2}(u^{-1}_{(3)})S^{-2}(\Lambda^1_{(1)})\\
	&S^{-1}(f^{[10]}_p)S^{-2}(u^{-1})S^{-3}(u)S^{-3}(d^{[6]}_q)S^{-4}(u^{-1}_{(6)})S^{-4}(\Lambda^1_{(4)})S^{-3}(f^{[7]}_p)S^{-3}(u^{-1})S^{-2}(u)S^{-1}(d^{[3]}_q)\\
	&S^{-2}(u^{-1}_{(13)})S^{-2}(\Lambda^2_{(6)})S^{-1}(f^{[4]}_p)\Lambda^2_{(9)})\\
	=&\lambda S (S^{-1}(\Lambda^2_{(1)})S^{-1}(u^{-1}_{(1)})d^{[9]}_qf^{[3]}_pS^{-1}(\Lambda^2_{(7)})S^{-1}(u^{-1}_{(8)})d^{[2]}_qS^{-1}(u)S^{-2}(u^{-1})S^{-2}(f^{[6]}_p)S^{-3}(\Lambda^1_{(5)})\\
	&S^{-3}(u^{-1}_{(5)})S^{-2}(d^{[5]}_q)S^{-2}(u)S^{-1}(u^{-1})f^{[9]}_pS^{-1}(\Lambda^1_{(2)})S^{-1}(u^{-1}_{(2)})d^{[8]}_qf^{[2]}_pS^{-1}(\Lambda^2_{(8)})S^{-1}(u^{-1}_{(9)})d^{[1]}_q\\
	&S^{-1}(u)S^{-2}(u^{-1})S^{-2}(f^{[5]}_p)S^{-3}(\Lambda^2_{(5)})S^{-2}(\Lambda^1_{(7)})S^{-1}(\Lambda^2_{(3)})\Lambda^1_{(9)})\\
	&\lambda S(S^{-1}(f^{[1]}_p)S^{-1}(d^{[7]}_q)S^{-2}(u^{-1}_{(3)})S^{-2}(\Lambda^1_{(3)})S^{-1}(f^{[8]}_p)S^{-2}(u^{-1})S^{-3}(u)S^{-3}(d^{[4]}_q)S^{-4}(u^{-1}_{(6)})\\
	&S^{-4}(\Lambda^{1}_{(6)})S^{-3}(\Lambda^2_{(4)})S^{-2}(\Lambda^1_{(8)})S^{-1}(\Lambda^2_{(2)})S^{-2}(\Lambda^1_{(1)})S^{-1}(f^{[10]}_p)S^{-2}(u^{-1})S^{-3}(u)S^{-3}(d^{[6]}_q)\\
	&S^{-4}(u^{-1}_{(4)})S^{-4}(\Lambda^1_{(4)})S^{-3}(f^{[7]}_p)S^{-3}(u^{-1})S^{-2}(u)S^{-1}(d^{[3]}_q)S^{-2}(u^{-1}_{(7)})S^{-2}(\Lambda^2_{(6)})S^{-1}(f^{[4]}_p)\Lambda^2_{(9)})
\end{aligned}\]
By Proposition \ref{cocycle}(8), we have that
\[\begin{aligned}
	Z_F=&\lambda S (S^{-1}(\Lambda^2_{(1)})S^{-1}(f^{[1]}_q)S^{-1}(u^{-1})f^{[3]}_pS^{-1}(\Lambda^2_{(7)})S^{-1}(f^{[8]}_q)S^{-2}(u^{-1})S^{-2}(f^{[6]}_p)S^{-3}(\Lambda^1_{(5)})\\
	&S^{-3}(f^{[5]}_q)S^{-3}(u^{-1})S^{-2}(u)S^{-1}(u^{-1})f^{[9]}_pS^{-1}(\Lambda^1_{(2)})S^{-1}(f^{[2]}_q)S^{-1}(u^{-1})f^{[2]}_pS^{-1}(\Lambda^2_{(8)})S^{-1}(f^{[9]}_q)\\
	&S^{-2}(u^{-1})S^{-2}(f^{[5]}_p)S^{-3}(\Lambda^2_{(5)})S^{-2}(\Lambda^1_{(7)})S^{-1}(\Lambda^2_{(3)})\Lambda^1_{(9)})\\
	&\lambda S(S^{-1}(f^{[1]}_p)S^{-2}(u^{-1})S^{-2}(f^{[3]}_q)S^{-2}(\Lambda^1_{(3)})S^{-1}(f^{[8]}_p)S^{-2}(u^{-1})S^{-3}(u)S^{-4}(u^{-1})S^{-4}(f^{[6]}_q)\\
	&S^{-4}(\Lambda^{1}_{(6)})S^{-3}(\Lambda^2_{(4)})S^{-2}(\Lambda^1_{(8)})S^{-1}(\Lambda^2_{(2)})S^{-2}(\Lambda^1_{(1)})S^{-1}(f^{[10]}_p)S^{-2}(u^{-1})S^{-3}(u)S^{-4}(u^{-1})\\
	&S^{-4}(f^{[4]}_q)S^{-4}(\Lambda^1_{(4)})S^{-3}(f^{[7]}_p)S^{-3}(u^{-1})S^{-2}(f^{[7]}_q)S^{-2}(\Lambda^2_{(6)})S^{-1}(f^{[4]}_p)\Lambda^2_{(9)})\\
\end{aligned}
\]
Then a similar calculation can be made for the term $S^{-1}(f^{[4]}_p)\Lambda^2_{(9)}$. 

\[
\begin{aligned}
Z_F=&\lambda S (S^{-1}(\Lambda^2_{(1)})S^{-1}(f^{[4]}_{p(1)})S^{-1}(f^{[1]}_q)S^{-1}(u^{-1})f^{[3]}_pS^{-1}(\Lambda^2_{(7)})S^{-1}(f^{[4]}_{p(8)})S^{-1}(f^{[8]}_q)S^{-2}(u^{-1})S^{-2}(f^{[6]}_p)\\
&S^{-3}(\Lambda^1_{(5)})S^{-3}(f^{[4]}_{p(5)})S^{-3}(f^{[5]}_q)S^{-3}(u^{-1})S^{-2}(u)S^{-1}(u^{-1})f^{[9]}_pS^{-1}(\Lambda^1_{(2)})S^{-1}(f^{[4]}_{p(2)})S^{-1}(f^{[2]}_q)\\
&S^{-1}(u^{-1})f^{[2]}_pS^{-1}(\Lambda^2_{(8)})S^{-1}(f^{[4]}_{p(9)})S^{-1}(f^{[9]}_q)S^{-2}(u^{-1})S^{-2}(f^{[5]}_p)S^{-3}(\Lambda^2_{(5)})S^{-2}(\Lambda^1_{(7)})S^{-1}(\Lambda^2_{(3)})\Lambda^1_{(9)})\\
&\lambda S(S^{-1}(f^{[1]}_p)S^{-2}(u^{-1})S^{-2}(f^{[3]}_q)S^{-2}(f^{[4]}_{p(3)})S^{-2}(\Lambda^1_{(3)})S^{-1}(f^{[8]}_p)S^{-2}(u^{-1})S^{-3}(u)S^{-4}(u^{-1})S^{-4}(f^{[6]}_q)\\
&S^{-4}(f^{[4]}_{p(6)})S^{-4}(\Lambda^{1}_{(6)})S^{-3}(\Lambda^2_{(4)})S^{-2}(\Lambda^1_{(8)})S^{-1}(\Lambda^2_{(2)})S^{-2}(\Lambda^1_{(1)})S^{-1}(f^{[10]}_p)S^{-2}(u^{-1})S^{-3}(u)S^{-4}(u^{-1})\\
&S^{-4}(f^{[4]}_q)S^{-4}(f^{[4]}_{p(4)})S^{-4}(\Lambda^1_{(4)})S^{-3}(f^{[7]}_p)S^{-3}(u^{-1})S^{-2}(f^{[7]}_q)S^{-2}(f^{[4]}_{p(7)})S^{-2}(\Lambda^2_{(6)})\Lambda^2_{(9)})\\
=&\lambda S (S^{-1}(\Lambda^2_{(1)})S^{-1}(f^{[4]}_p)S^{-1}(u^{-1})f^{[3]}_pS^{-1}(\Lambda^2_{(7)})S^{-1}(f^{[11]}_p)S^{-2}(u^{-1})S^{-2}(f^{[14]}_p)S^{-3}(\Lambda^1_{(5)})\\
&S^{-3}(f^{[8]}_p)S^{-3}(u^{-1})S^{-2}(u)S^{-1}(u^{-1})f^{[17]}_pS^{-1}(\Lambda^1_{(2)})S^{-1}(f^{[5]}_p)S^{-1}(u^{-1})f^{[2]}_pS^{-1}(\Lambda^2_{(8)})\\
&S^{-1}(f^{[12]}_p)S^{-2}(u^{-1})S^{-2}(f^{[13]}_p)S^{-3}(\Lambda^2_{(5)})S^{-2}(\Lambda^1_{(7)})S^{-1}(\Lambda^2_{(3)})\Lambda^1_{(9)})\\
&\lambda S(S^{-1}(f^{[1]}_p)S^{-2}(u^{-1})S^{-2}(f^{[6]}_p)S^{-2}(\Lambda^1_{(3)})S^{-1}(f^{[16]}_p)S^{-2}(u^{-1})S^{-3}(u)S^{-4}(u^{-1})S^{-4}(f^{[9]}_p)\\
&S^{-4}(\Lambda^{1}_{(6)})S^{-3}(\Lambda^2_{(4)})S^{-2}(\Lambda^1_{(8)})S^{-1}(\Lambda^2_{(2)})S^{-2}(\Lambda^1_{(1)})S^{-1}(f^{[18]}_p)S^{-2}(u^{-1})S^{-3}(u)S^{-4}(u^{-1})\\
&S^{-4}(f^{[7]}_p)S^{-4}(\Lambda^1_{(4)})S^{-3}(f^{[15]}_p)S^{-3}(u^{-1})S^{-2}(f^{[10]}_p)S^{-2}(\Lambda^2_{(6)})\Lambda^2_{(9)})\\
=&\lambda S (S^{-1}(\Lambda^2_{(1)})S^{-1}(\Lambda^2_{(7)})S^{-3}(\Lambda^1_{(5)})S^{-3}(f^{[2]}_r)S^{-3}(u^{-1})S^{-2}(u)S^{-1}(u^{-1})f^{[5]}_rS^{-1}(\Lambda^1_{(2)})S^{-1}(\Lambda^2_{(8)})\\
&S^{-3}(\Lambda^2_{(5)})S^{-2}(\Lambda^1_{(7)})S^{-1}(\Lambda^2_{(3)})\Lambda^1_{(9)})\\
&\lambda S(S^{-2}(\Lambda^1_{(3)})S^{-1}(f^{[4]}_r)S^{-2}(u^{-1})S^{-3}(u)S^{-4}(u^{-1})S^{-4}(f^{[3]}_r)S^{-4}(\Lambda^{1}_{(6)})S^{-3}(\Lambda^2_{(4)})S^{-2}(\Lambda^1_{(8)})\\
&S^{-1}(\Lambda^2_{(2)})S^{-2}(\Lambda^1_{(1)})S^{-1}(f^{[6]}_r)S^{-2}(u^{-1})S^{-3}(u)S^{-4}(u^{-1})S^{-4}(f^{[1]}_r)S^{-4}(\Lambda^1_{(4)})S^{-2}(\Lambda^2_{(6)})\Lambda^2_{(9)})\,.
\end{aligned}
\]

\begin{lemma}\label{lemma2}
	For any $x\in H$ and $X, Y\in \mathrm{End}(H)$,
\[\begin{aligned}
	\lambda S(X(\Lambda_{(1)}x)Y(\Lambda_{(2)})\Lambda_{(3)})=\lambda S(S^{-1}(x_{(1)})X(\Lambda_{(1)})Y(\Lambda_{(2)}S(x_{(2)}))\Lambda_{(3)}).
\end{aligned}\]
\end{lemma}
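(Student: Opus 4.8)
The plan is to show that the right-hand side reduces to the left-hand side; the only input needed is Theorem \ref{integral}(3) (used in the same iterated manner as in the proof of Lemma \ref{lemma1}) together with the counit and antipode axioms.

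\emph{Step 1.} First I would record the two-argument version of Theorem \ref{integral}(3): applying that result to the linear map $w\mapsto W(w_{(1)},w_{(2)})$ (and using coassociativity to view $\Lambda_{(1)}\otimes\Lambda_{(2)}$ as the coproduct of the first Sweedler factor of $\Lambda$ relative to the last factor $\Lambda_{(3)}$) gives, for every $a\in H$ and $W\in\mathrm{End}(H\otimes H)$,
\[\lambda S\bigl(a\,W(\Lambda_{(1)},\Lambda_{(2)})\,\Lambda_{(3)}\bigr)=\lambda S\bigl(W(\Lambda_{(1)}S(a_{(2)}),\,\Lambda_{(2)}S(a_{(1)}))\,\Lambda_{(3)}\bigr),\]
where $\Delta(S(a))=S(a_{(2)})\otimes S(a_{(1)})$ has been used. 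I would then apply this with $a=S^{-1}(x_{(1)})$ and $W(u,v)=X(u)\,Y(v\,S(x_{(2)}))$. Since $\Delta(S^{-1}(x_{(1)}))=S^{-1}((x_{(1)})_{(2)})\otimes S^{-1}((x_{(1)})_{(1)})$, the prefactor $S^{-1}(x_{(1)})$ is turned into a right multiplication by $(x_{(1)})_{(1)}$ on $\Lambda_{(1)}$ inside $X$ and by $(x_{(1)})_{(2)}$ on $\Lambda_{(2)}$ inside $Y$, so that the right-hand side of the lemma becomes
\[\lambda S\bigl(X(\Lambda_{(1)}(x_{(1)})_{(1)})\,Y(\Lambda_{(2)}(x_{(1)})_{(2)}S(x_{(2)}))\,\Lambda_{(3)}\bigr).\]

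\emph{Step 2.} Now I would collapse this. By coassociativity $(x_{(1)})_{(1)}\otimes(x_{(1)})_{(2)}\otimes x_{(2)}=x_{(1)}\otimes(x_{(2)})_{(1)}\otimes(x_{(2)})_{(2)}$, so the displayed expression equals $\lambda S\bigl(X(\Lambda_{(1)}x_{(1)})\,Y(\Lambda_{(2)}(x_{(2)})_{(1)}S((x_{(2)})_{(2)}))\,\Lambda_{(3)}\bigr)$. The antipode axiom $(x_{(2)})_{(1)}S((x_{(2)})_{(2)})=\varepsilon(x_{(2)})1$ and then the counit axiom $x_{(1)}\varepsilon(x_{(2)})=x$ reduce this to $\lambda S\bigl(X(\Lambda_{(1)}x)\,Y(\Lambda_{(2)})\,\Lambda_{(3)}\bigr)$, which is the left-hand side.

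The only delicate point is the Sweedler-index bookkeeping: writing $\Delta$ of the prefactor $S^{-1}(x_{(1)})$ correctly (this uses bijectivity of $S$), and the coassociativity relabelling in Step 2 that lines up the leg $(x_{(1)})_{(2)}$ with $S(x_{(2)})$ so that the antipode telescopes. I would emphasize that one should start from the right-hand side rather than the left: the factor $S(x_{(2)})$ already present inside $Y$ on the right is precisely what provides the second leg needed for the telescoping, and trying to run the argument forward from the left-hand side stalls because there is no such factor to exploit.
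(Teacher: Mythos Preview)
Your proof is correct, and it takes a genuinely different route from the paper's. The paper argues from the left-hand side: it invokes the identity $\Lambda_{(1)}x\otimes\Lambda_{(2)}=\Lambda_{(1)}\otimes\Lambda_{(2)}S(x\leftharpoonup\alpha)$ involving the distinguished grouplike $\alpha$, then uses the Radford-type relation $\lambda(S^2(b\leftharpoonup\alpha)a)=\lambda(ab)$ to cycle the resulting factor around to the other side of $\lambda$. Your argument avoids the modular element entirely: starting from the right-hand side, you apply only Theorem~\ref{integral}(3) (in its two-slot form, exactly as set up in Lemma~\ref{lemma1}) to absorb the prefactor $S^{-1}(x_{(1)})$, and then the antipode/counit axioms telescope away $x_{(2)}S(x_{(3)})$. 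This is more elementary than the paper's approach and relies on strictly fewer structural facts about $H$; the paper's proof, on the other hand, illustrates how the modular element $\alpha$ mediates between left and right actions on integrals, which is thematically closer to the Kuperberg-invariant formalism used elsewhere in the paper.
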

\begin{proof} By $\Lambda_{(1)}x\otimes \Lambda_{(2)}=\Lambda_{(1)}\otimes \Lambda_{(2)}S(x\leftharpoonup\alpha)=\Lambda_{(1)}\otimes \Lambda_{(2)}\alpha(x_{(1)})S(x_{(2)})$, the left hand side is equal to
\[\begin{aligned}
L=&\lambda (S(\Lambda_{(3)})SY(\Lambda_{(2)})SX(\Lambda_{(1)}x)))\\
=&\lambda (\alpha(x_{(1)})S(\Lambda_{(3)}S(x_{(2)}))SY(\Lambda_{(2)}S(x_{(3)}))SX(\Lambda_{(1)}))\\
=&\lambda (S^2(x_{(1)}\leftharpoonup\alpha)S(\Lambda_{(3)})SY(\Lambda_{(2)}S(x_{(2)}))SX(\Lambda_{(1)}))
\end{aligned}\]
By $\lambda(S^2(b\leftharpoonup\alpha)a)=\lambda(ab)$,
\[\begin{aligned}
	L=&\lambda (S(\Lambda_{(3)})SY(\Lambda_{(2)}S(x_{(2)}))SX(\Lambda_{(1)})x_{(1)})\\
	=&\lambda S(S^{-1}(x_{(1)})X(\Lambda_{(1)})Y(\Lambda_{(2)}S(x_{(2)}))\Lambda_{(3)})\,,
\end{aligned}\]
and this completes the proof.
\end{proof}

We are now ready to state and prove the main technical lemma in the paper.

\begin{lemma}\label{lem:3}
For any $x, w_1, w_2, w_3\in H$,
\[\begin{aligned}
	&\lambda S (S^{-1}(\Lambda^2_{(1)})S^{-1}(\Lambda^2_{(7)})S^{-3}(\Lambda^1_{(5)})w_2S^{-1}(\Lambda^1_{(2)}x_{(2)})S^{-1}(\Lambda^2_{(8)})S^{-3}(\Lambda^2_{(5)})S^{-2}(\Lambda^1_{(7)})S^{-1}(\Lambda^2_{(3)})\Lambda^1_{(9)})\\
	&\lambda S(S^{-2}(\Lambda^1_{(3)}x_{(3)})w_3S^{-4}(\Lambda^1_{(6)})S^{-3}(\Lambda^2_{(4)})S^{-2}(\Lambda^1_{(8)})S^{-1}(\Lambda^2_{(2)})S^{-2}(\Lambda^1_{(1)}x_{(1)})w_1S^{-4}(\Lambda^1_{(4)})S^{-2}(\Lambda^2_{(6)})\Lambda^2_{(9)})\\
	=&\lambda S (S^{-1}(\Lambda^2_{(1)})S^{-1}(\Lambda^2_{(7)})S^{-3}(x_{(2)}\Lambda^1_{(5)})w_2S^{-1}(\Lambda^1_{(2)})S^{-1}(\Lambda^2_{(8)})S^{-3}(\Lambda^2_{(5)})S^{-2}(\Lambda^1_{(7)})S^{-1}(\Lambda^2_{(3)})\Lambda^1_{(9)})\\
	&\lambda S(S^{-2}(\Lambda^1_{(3)})w_3S^{-4}(x_{(3)}\Lambda^1_{(6)})S^{-3}(\Lambda^2_{(4)})S^{-2}(\Lambda^1_{(8)})S^{-1}(\Lambda^2_{(2)})S^{-2}(\Lambda^1_{(1)})w_1S^{-4}(x_{(1)}\Lambda^1_{(4)})S^{-2}(\Lambda^2_{(6)})\Lambda^2_{(9)})\\
\end{aligned}\]
\end{lemma}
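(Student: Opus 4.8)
### Proof Proposal

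The plan is to move the three "extra" factors $x_{(1)}, x_{(2)}, x_{(3)}$ past the iterated coproducts of $\Lambda^1$ and $\Lambda^2$ using the antipode identities in Theorem~\ref{integral} and the integral-juggling Lemmas~\ref{lemma1} and \ref{lemma2}. On the left-hand side, $x_{(2)}$ sits to the \emph{right} of $\Lambda^1_{(2)}$ inside an $S^{-1}$, and $x_{(1)}, x_{(3)}$ sit to the right of $\Lambda^1_{(1)}, \Lambda^1_{(3)}$ inside $S^{-2}$'s; on the right-hand side these have been moved so that $x_{(1)}, x_{(2)}, x_{(3)}$ sit to the \emph{left} of $\Lambda^1_{(4)}, \Lambda^1_{(5)}, \Lambda^1_{(6)}$ respectively, all inside higher powers of $S$. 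The key structural observation is that the subscripts $(1)$ vs. $(4)$, $(2)$ vs. $(5)$, $(3)$ vs. $(6)$ pair up: in the coproduct $\Delta^9(\Lambda^1) = \Lambda^1_{(1)}\otimes\cdots\otimes\Lambda^1_{(9)}$, each of these pairs straddles a "halfway" point, so Theorem~\ref{integral}(1), in its iterated form, lets one trade a factor $a$ multiplied on the left of $\Lambda^1_{(k)}$ for $S(a)$ multiplied (suitably) on another tensor leg, or vice versa.

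First I would treat the factor $x_{(2)}$: it appears as $S^{-1}(\Lambda^1_{(2)}x_{(2)})$ on the left. I would pull $x_{(2)}$ out of the $S^{-1}$, writing $S^{-1}(\Lambda^1_{(2)}x_{(2)}) = S^{-1}(x_{(2)})S^{-1}(\Lambda^1_{(2)})$ (this is just the antialgebra property of $S^{-1}$), introducing a new coproduct factor of $x$ which I then need to reabsorb. I would use Lemma~\ref{lemma2} — which is precisely the statement that a factor $x$ on the right of $\Lambda_{(1)}$ can be traded for $S^{-1}(x_{(1)})$ on the far left and $S(x_{(2)})$ on the right of $\Lambda_{(2)}$, at the cost of splitting $x$ — as the engine for this transfer. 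The point is that after applying Lemma~\ref{lemma2} (or the two-variable form in Lemma~\ref{lemma1}) with appropriate relabeling, the leftover $S$-factors of $x$ can be collected and shown to cancel because $x \mapsto x_{(1)}S(x_{(2)}) = \varepsilon(x)$, or they reassemble into the single factor $x_{(k)}$ appearing on the right-hand side. The same argument, applied with $S^{-2}$ in place of $S^{-1}$ (so Theorem~\ref{integral}(1) gets conjugated by $S$ and one picks up $S^{-1}(x_{(\bullet)})$ instead of $S(x_{(\bullet)})$), handles the factors $x_{(1)}$ and $x_{(3)}$ living inside the $S^{-2}$'s.

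The cleanest route is probably to apply Lemma~\ref{lemma1}(2) directly: that lemma already packages exactly this kind of "multiply a split-up $x$ against all eight remaining $\Lambda$-legs" maneuver, trading a factor $S^{-1}(x)$ on the right of $\Lambda^2_{(9)}$ (or the analogous position) for factors $x_{(j)}$ distributed on the left of the eight other legs. So the strategy is: first rewrite the left-hand side so that the three $x_{(i)}$'s are consolidated into a single $S^{\pm 1}(x)$ or $x$ appearing in a "terminal" position adjacent to $\Lambda^{1}_{(9)}$ or $\Lambda^2_{(9)}$, using Lemma~\ref{lemma2} iteratively; then apply Lemma~\ref{lemma1}(1) or (2) to redistribute that single factor onto the eight remaining legs; and finally check, leg by leg, that the redistribution lands $x_{(2)}$ on the left of $\Lambda^1_{(5)}$, $x_{(1)}$ on the left of $\Lambda^1_{(4)}$, and $x_{(3)}$ on the left of $\Lambda^1_{(6)}$ (inside the correct powers of $S$), with all other legs picking up trivial $\varepsilon$-contributions.

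The main obstacle will be bookkeeping: keeping track of which coproduct index of $x$ attaches to which leg of $\Lambda^1$ or $\Lambda^2$, and making sure the powers of $S$ (which range from $S^{-1}$ up to $S^{-4}$ here) are conjugated consistently when Theorem~\ref{integral}(1) is pushed through an $S^{-k}$. A secondary subtlety is that the statement only asks for the three specified legs to receive $x$-factors while the other five legs are untouched — so after the global redistribution one must verify that the five "spurious" $x$-legs telescope away via counit, which forces a careful choice of exactly how to split $x$ in the iterated application of Lemma~\ref{lemma2}. I expect the proof to be a moderately long but entirely mechanical induction/substitution once the right sequence of lemma applications is pinned down; no genuinely new idea beyond Theorem~\ref{integral} and Lemmas~\ref{lemma1}–\ref{lemma2} should be needed.
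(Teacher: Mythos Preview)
Your proposal is correct and lines up with the paper's approach: the proof is exactly a sequence of applications of Lemma~\ref{lemma2} and Lemma~\ref{lemma1}(2), with intermediate counit cancellations. Concretely, the paper first applies Lemma~\ref{lemma2} treating $\Lambda^1_{(1)}x_{(1)}\otimes\Lambda^1_{(2)}x_{(2)}\otimes\Lambda^1_{(3)}x_{(3)}$ as the first block of a $3{+}5{+}1$ grouping of the nine $\Lambda^1$-legs, then applies Lemma~\ref{lemma2} a second time to $S^{-1}(\Lambda^2_{(1)}x_{(1)})$ on the $\Lambda^2$ side, cancels the resulting adjacent $x_{(k)}S^{-1}(x_{(k+1)})$ pairs, and finally invokes Lemma~\ref{lemma1}(2) to land the surviving $x$-factors on $\Lambda^1_{(4)},\Lambda^1_{(5)},\Lambda^1_{(6)}$. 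Your ``consolidate then redistribute'' phrasing is a slightly different narrative for the same computation---the paper never literally collapses the three $x_{(i)}$'s into a single terminal factor, it distributes more broadly first and then cancels---but the lemmas invoked and the cancellation mechanism are identical.
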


\begin{proof} 
Apply Lemma \ref{lemma2} for $\Lambda^1_{(1)}x_{(1)}\otimes\Lambda^1_{(2)}x_{(2)}\otimes\Lambda^1_{(3)}x_{(3)}$. The left hand side becomes
\[
\begin{aligned}
	L=&\lambda S (S^{-1}(x_{(1)})S^{-1}(\Lambda^2_{(1)})S^{-1}(\Lambda^2_{(7)})S^{-2}(x_{(5)})S^{-3}(\Lambda^1_{(5)})w_2S^{-1}(\Lambda^1_{(2)})S^{-1}(\Lambda^2_{(8)})S^{-3}(\Lambda^2_{(5)})S^{-2}(\Lambda^1_{(7)})\\
	&S^{-1}(x_{(3)})S^{-1}(\Lambda^2_{(3)})\Lambda^1_{(9)})\\
	&\lambda S(S^{-2}(\Lambda^1_{(3)})w_3S^{-4}(\Lambda^1_{(6)})S^{-2}(x_{(4)})S^{-3}(\Lambda^2_{(4)})S^{-2}(\Lambda^1_{(8)})S^{-1}(x_{(2)})S^{-1}(\Lambda^2_{(2)})S^{-2}(\Lambda^1_{(1)})w_1S^{-4}(\Lambda^1_{(4)})\\
	&S^{-3}(x_{(6)})S^{-2}(\Lambda^2_{(6)})\Lambda^2_{(9)})\\
\end{aligned}
\]
Similarly, Lemma \ref{lemma2} is applied to $S^{-1}(x_{(1)})S^{-1}(\Lambda^2_{(1)})=S^{-1}(\Lambda^2_{(1)}x_{(1)})$. Then we get that
\[
\begin{aligned}
L=&\lambda S (S^{-1}(\Lambda^2_{(1)})x_{(3)}S^{-1}(\Lambda^2_{(7)})S^{-2}(x_{(12)})S^{-3}(\Lambda^1_{(5)})w_2S^{-1}(\Lambda^1_{(2)})x_{(2)}S^{-1}(\Lambda^2_{(8)})S^{-2}(x_{(5)})S^{-3}(\Lambda^2_{(5)})\\
&S^{-2}(\Lambda^1_{(7)})S^{-1}(x_{(10)})x_{(7)}S^{-1}(\Lambda^2_{(3)})\Lambda^1_{(9)})\\
&\lambda S(S^{-1}(x_{(1)})S^{-2}(\Lambda^1_{(3)})w_3S^{-4}(\Lambda^1_{(6)})S^{-3}(x_{(11)})S^{-2}(x_{(6)})S^{-3}(\Lambda^2_{(4)})S^{-2}(\Lambda^1_{(8)})S^{-1}(x_{(9)})x_{(8)}\\
&S^{-1}(\Lambda^2_{(2)})S^{-2}(\Lambda^1_{(1)})w_1S^{-4}(\Lambda^1_{(4)})S^{-3}(x_{(13)})S^{-2}(\Lambda^2_{(6)})S^{-1}(x_{(4)})\Lambda^2_{(9)})\\
=&\lambda S (S^{-1}(\Lambda^2_{(1)})x_{(3)}S^{-1}(\Lambda^2_{(7)})S^{-2}(x_{(6)})S^{-3}(\Lambda^1_{(5)})w_2S^{-1}(\Lambda^1_{(2)})x_{(2)}S^{-1}(\Lambda^2_{(8)})S^{-2}(x_{(5)})S^{-3}(\Lambda^2_{(5)})\\
&S^{-2}(\Lambda^1_{(7)})S^{-1}(\Lambda^2_{(3)})\Lambda^1_{(9)})\\
&\lambda S(S^{-1}(x_{(1)})S^{-2}(\Lambda^1_{(3)})w_3S^{-4}(\Lambda^1_{(6)})S^{-3}(\Lambda^2_{(4)})S^{-2}(\Lambda^1_{(8)})S^{-1}(\Lambda^2_{(2)})S^{-2}(\Lambda^1_{(1)})w_1S^{-4}(\Lambda^1_{(4)})S^{-3}(x_{(7)})\\
&S^{-2}(\Lambda^2_{(6)})S^{-1}(x_{(4)})\Lambda^2_{(9)})\\
\end{aligned}
\]
By Lemma \ref{lemma1}(2), we get that
\[
\begin{aligned}
L=&\lambda S (S^{-1}(\Lambda^2_{(1)})S^{-1}(x_{(4)})x_{(3)}S^{-1}(\Lambda^2_{(7)})S^{-1}(x_{(17)})S^{-2}(x_{(20)})S^{-3}(\Lambda^1_{(5)})S^{-3}(x_{(10)})w_2S^{-1}(\Lambda^1_{(2)})\\
&S^{-1}(x_{(7)})x_{(2)}S^{-1}(\Lambda^2_{(8)})S^{-1}(x_{(18)})S^{-2}(x_{(19)})S^{-3}(\Lambda^2_{(5)})S^{-3}(x_{(15)})S^{-2}(x_{(12)})S^{-2}(\Lambda^1_{(7)})S^{-1}(\Lambda^2_{(3)})\Lambda^1_{(9)})\\
&\lambda S(S^{-1}(x_{(1)})S^{-2}(x_{(8)})S^{-2}(\Lambda^1_{(3)})w_3S^{-4}(x_{(11)})S^{-4}(\Lambda^1_{(6)})S^{-3}(\Lambda^2_{(4)})S^{-3}(x_{(14)})S^{-2}(x_{(13)})S^{-2}(\Lambda^1_{(8)})\\
&S^{-1}(\Lambda^2_{(2)})S^{-1}(x_{(5)})S^{-2}(x_{(6)})S^{-2}(\Lambda^1_{(1)})w_1S^{-4}(x_{(9)})S^{-4}(\Lambda^1_{(4)})S^{-3}(x_{(21)})S^{-2}(x_{(16)})S^{-2}(\Lambda^2_{(6)})\Lambda^2_{(9)})\\
=&\lambda S (S^{-1}(\Lambda^2_{(1)})S^{-1}(\Lambda^2_{(7)})S^{-3}(\Lambda^1_{(5)})S^{-3}(x_{(2)})w_2S^{-1}(\Lambda^1_{(2)})S^{-1}(\Lambda^2_{(8)})S^{-3}(\Lambda^2_{(5)})S^{-2}(\Lambda^1_{(7)})S^{-1}(\Lambda^2_{(3)})\Lambda^1_{(9)})\\
&\lambda S(S^{-2}(\Lambda^1_{(3)})w_3S^{-4}(x_{(3)})S^{-4}(\Lambda^1_{(6)})S^{-3}(\Lambda^2_{(4)})S^{-2}(\Lambda^1_{(8)})S^{-1}(\Lambda^2_{(2)})S^{-2}(\Lambda^1_{(1)})w_1S^{-4}(x_{(1)})S^{-4}(\Lambda^1_{(4)})\\
&S^{-2}(\Lambda^2_{(6)})\Lambda^2_{(9)})
\end{aligned}\]
This completes the proof.
\end{proof}

% With the help of Lemma \ref{lem:3}, we can obtain the gauge invariance of $Z(W, f_0, H)$.

\begin{prop}
Let $H$ be a finite-dimensional Hopf algebra. Then for the framing $f_0$ shown in Figure \ref{framed_weeks}, the corresponding Kuperberg invariant $Z(W, f_0, H)$ is a gauge invariant of $H$.
\end{prop}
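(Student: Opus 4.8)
The plan is to show that $Z(H_F)=Z(H)$ for every normalized $2$-cocycle $F$; since $\alpha(g)$ is a gauge invariant by \cite{Shi15} and $Z(W,f_0,H)=\alpha(g)^{-1}Z(H)$, this yields the gauge invariance of $Z(W,f_0,H)$, hence of the Kuperberg invariant for the framing $f_0$. The computational reductions above already carry out most of this: we write $Z(H)=\mathrm{Tr}\big((S\otimes S)\circ P_2\big)$, expand $Z_F:=Z(H_F)=\mathrm{Tr}\big((S_F\otimes S_F)\circ P_{2F}\big)$ via the trace formula of Theorem~\ref{integral}(2) applied to $H\otimes H$ with the normalized integrals $\Lambda^1\otimes\Lambda^2$ and $\lambda\otimes\lambda$ together with Proposition~\ref{cocycle}(9), and then repeatedly apply Theorem~\ref{integral}(1)--(3), Proposition~\ref{cocycle}(1)--(8) and Lemmas~\ref{lemma1}--\ref{lemma2} to cancel the cocycle legs $d_j$ and all but six of the legs $f_i$. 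The outcome is the \emph{final form} of $Z_F$ displayed just above, which agrees with the trace-formula expression for $Z(H)$ except for three ``correction words'' built from $u$, $u^{-1}$ and the six surviving cocycle legs $f^{[1]}_r,\dots,f^{[6]}_r$, inserted next to the factors $\Lambda^1_{(1)},\Lambda^1_{(2)},\Lambda^1_{(3)}$ on one side and, symmetrically, next to $\Lambda^1_{(4)},\Lambda^1_{(5)},\Lambda^1_{(6)}$.

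To conclude, I would invoke the main technical Lemma~\ref{lem:3}. Matching the final form of $Z_F$ against the left-hand side of that lemma, one takes $x$ to be the element whose iterated coproduct $\Delta^3(x)$ supplies the legs $f^{[4]}_r,f^{[5]}_r,f^{[6]}_r$ together with $u^{-1}$ --- this is where Proposition~\ref{cocycle}(7), the formula for $\Delta^n(u^{-1})$ in terms of cocycle legs, enters --- and one takes $w_1,w_2,w_3$ to be the remaining short $u/u^{-1}/f_r$ words. Lemma~\ref{lem:3} then transports the correction words from the ``$\Lambda^1_{(i)}$-side'' to the ``$\Lambda^1_{(i+3)}$-side'' while uniformly raising the relevant powers of $S$ by $2$. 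That shift by $2$ is precisely what aligns the transported factors, together with $w_1,w_2,w_3$, with the right-hand sides of Proposition~\ref{cocycle}(6) and (8) (the formulas for $\Delta^n(u)$ and $\Delta^n(Q)$; the latter matters because $S_F^2$ differs from $S^2$ by conjugation by $Q=uS(u^{-1})$). Applying those identities, and using Theorem~\ref{integral}(1) to slide the leftover legs past $\Lambda^1_{(9)}$ and $\Lambda^2_{(9)}$, all the $u$, $u^{-1}$ and $f_r$ factors telescope to $1$, leaving exactly the trace-formula expression for $Z(H)$.

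The delicate part is the bookkeeping in this last step. The three correction words share the same six cocycle legs, so they cannot be simplified independently; one must keep simultaneous track of which power of $S$ each leg and each factor $u^{\pm1}$ sits under, and of how these interact with the nine-fold coproducts of $\Lambda^1$ and $\Lambda^2$, so that Proposition~\ref{cocycle}(6)--(8) and Theorem~\ref{integral}(1) apply at the correct positions. Getting the $S$-powers to line up, so that the $Q$-conjugation discrepancy between $S_F^2$ and $S^2$ is absorbed, is exactly the point that Lemma~\ref{lem:3}, with its built-in shift of the $S$-exponent by $2$, is designed to handle.
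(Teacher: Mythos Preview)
Your overall strategy coincides with the paper's: reduce to showing $Z_F=Z(H)$, use the trace formula and the preparatory reductions to reach the ``final form'' with six surviving legs $f^{[1]}_r,\dots,f^{[6]}_r$, and then eliminate them via Lemma~\ref{lem:3} together with the $\Delta^n(u)$, $\Delta^n(u^{-1})$, $\Delta^n(Q)$ identities of Proposition~\ref{cocycle}. However, your sketch of the final step is not quite what actually works.

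First, a single application of Lemma~\ref{lem:3} does not suffice. In the final pre-proposition form the legs $f^{[4]}_r,f^{[5]}_r,f^{[6]}_r$ do not sit as a common $\Delta^3(x)$ adjacent to $\Lambda^1_{(1)},\Lambda^1_{(2)},\Lambda^1_{(3)}$ in the positions Lemma~\ref{lem:3} demands (for instance $f^{[5]}_r$ appears as $f^{[5]}_rS^{-1}(\Lambda^1_{(2)})$, not as $S^{-1}(\Lambda^1_{(2)}x_{(2)})$). The paper first uses Proposition~\ref{cocycle}(1) to split $F_6=(F_3\otimes F_3)(\Delta^3\otimes\Delta^3)(F)$, writes the six legs as $f^{[\bullet]}_q f^{[1]}_{i(\bullet)}$ and $f^{[\bullet]}_p f^{[2]}_{i(\bullet)}$, and applies Lemma~\ref{lem:3} with $x=S(f^{[2]}_i)$. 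After simplifying via $f^{[1]}_iS(f^{[2]}_i)=u$ and Proposition~\ref{cocycle}(6), the remaining obstruction is exactly $\Delta^3(Q)$ (Proposition~\ref{cocycle}(8)); one then expands $Q=uS(u^{-1})$ and applies Lemma~\ref{lem:3} a \emph{second} time, with $x$ built from the $d^{[1]}_j$-legs, to bring everything into the form $\Delta^3\big(d^{[1]}_j\,u\,S(d^{[2]}_j)\big)$.

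Second, the final cancellation is not via Theorem~\ref{integral}(1) sliding legs past $\Lambda^1_{(9)},\Lambda^2_{(9)}$, but via the purely algebraic identity $\sum_j d^{[1]}_j\,u\,S(d^{[2]}_j)=1$ (immediate from $F^{-1}F=1$ and the definition of $u$), whose $\Delta^3$ therefore collapses to $1\otimes1\otimes1$. So your roadmap is right in spirit, but the concrete choice of $x$, the need for two passes through Lemma~\ref{lem:3} with the $Q$-step in between, and the mechanism of the last cancellation all differ from your description.
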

\begin{proof}
As is noted above, it suffices to show that $Z(H) = Z_F$, where $Z(H) = \alpha(g)Z(W, f_0, H)$ (see Equation \eqref{eq:ZH}). Since $Q=uS(u^{-1})$, by the computations above and Lemma \ref{lem:3}, we have 
\[
\begin{aligned}
	Z_F
	=&\lambda S (S^{-1}(\Lambda^2_{(1)})S^{-1}(\Lambda^2_{(7)})S^{-3}(\Lambda^1_{(5)})S^{-3}(f^{[2]}_qf^{[1]}_{i(2)})S^{-3}(u^{-1})S^{-2}(Q)f^{[2]}_pS^{-1}(\Lambda^1_{(2)}S(f^{[2]}_i)_{(2)})\\
	&S^{-1}(\Lambda^2_{(8)})S^{-3}(\Lambda^2_{(5)})S^{-2}(\Lambda^1_{(7)})S^{-1}(\Lambda^2_{(3)})\Lambda^1_{(9)})\\
	&\lambda S(S^{-2}(\Lambda^1_{(3)}S(f^{[2]}_i)_{(3)})S^{-1}(f^{[1]}_p))S^{-3}(Q)S^{-4}(u^{-1})S^{-4}(f^{[3]}_qf^{[1]}_i)_{(3)})S^{-4}(\Lambda^{1}_{(6)})S^{-3}(\Lambda^2_{(4)})\\
	&S^{-2}(\Lambda^1_{(8)})S^{-1}(\Lambda^2_{(2)})S^{-2}(\Lambda^1_{(1)}S(f^{[2]}_i)_{(1)})S^{-1}(f^{[3]}_p)S^{-3}(Q)S^{-4}(u^{-1})S^{-4}(f^{[1]}_qf^{[1]}_{i(1)})S^{-4}(\Lambda^1_{(4)})\\
	&S^{-2}(\Lambda^2_{(6)})\Lambda^2_{(9)})\\
	=&\lambda S (S^{-1}(\Lambda^2_{(1)})S^{-1}(\Lambda^2_{(7)})S^{-3}(\Lambda^1_{(5)})S^{-3}(S(f^{[2]}_i)_{(2)})S^{-3}(f^{[2]}_qf^{[1]}_{i(2)})S^{-3}(u^{-1})S^{-2}(Q)f^{[2]}_p\\
	&S^{-1}(\Lambda^1_{(2)})S^{-1}(\Lambda^2_{(8)})S^{-3}(\Lambda^2_{(5)})S^{-2}(\Lambda^1_{(7)})S^{-1}(\Lambda^2_{(3)})\Lambda^1_{(9)})\\
	&\lambda S(S^{-2}(\Lambda^1_{(3)})S^{-1}(f^{[1]}_p)S^{-3}(Q)S^{-4}(u^{-1})S^{-4}(f^{[3]}_qf^{[1]}_i)_{(3)})S^{-4}(S(f^{[2]}_i)_{(3)})S^{-4}(\Lambda^{1}_{(6)})\\
	&S^{-3}(\Lambda^2_{(4)})S^{-2}(\Lambda^1_{(8)})S^{-1}(\Lambda^2_{(2)})S^{-2}(\Lambda^1_{(1)})S^{-1}(f^{[3]}_p)S^{-3}(Q)S^{-4}(u^{-1})S^{-4}(f^{[1]}_qf^{[1]}_{i(1)})\\
	&S^{-4}(S(f^{[2]}_i)_{(1)})S^{-4}(\Lambda^1_{(4)})S^{-2}(\Lambda^2_{(6)})\Lambda^2_{(9)})\,.
\end{aligned}\]
Then we can simplify $Z_F$ as
\[\begin{aligned}
	Z_F=&\lambda S (S^{-1}(\Lambda^2_{(1)})S^{-1}(\Lambda^2_{(7)})S^{-3}(\Lambda^1_{(5)})S^{-3}(f^{[2]}_qu_{(2)})S^{-3}(u^{-1})S^{-2}(Q)f^{[2]}_pS^{-1}(\Lambda^1_{(2)})S^{-1}(\Lambda^2_{(8)})\\
	&S^{-3}(\Lambda^2_{(5)})S^{-2}(\Lambda^1_{(7)})S^{-1}(\Lambda^2_{(3)})\Lambda^1_{(9)})\\
	&\lambda S(S^{-2}(\Lambda^1_{(3)})S^{-1}(f^{[1]}_p)S^{-3}(Q)S^{-4}(u^{-1})S^{-4}(f^{[3]}_qu_{(3)})S^{-4}(\Lambda^{1}_{(6)})S^{-3}(\Lambda^2_{(4)})S^{-2}(\Lambda^1_{(8)})\\
	&S^{-1}(\Lambda^2_{(2)})S^{-2}(\Lambda^1_{(1)})S^{-1}(f^{[3]}_p)S^{-3}(Q)S^{-4}(u^{-1})S^{-4}(f^{[1]}_qu_{(1)})S^{-4}(\Lambda^1_{(4)})S^{-2}(\Lambda^2_{(6)})\Lambda^2_{(9)})\\
	=&\lambda S (S^{-1}(\Lambda^2_{(1)})S^{-1}(\Lambda^2_{(7)})S^{-3}(\Lambda^1_{(5)})S^{-3}(uS(d^{[2]}_q))S^{-3}(u^{-1})S^{-2}(Q)f^{[2]}_pS^{-1}(\Lambda^1_{(2)})S^{-1}(\Lambda^2_{(8)})\\
	&S^{-3}(\Lambda^2_{(5)})S^{-2}(\Lambda^1_{(7)})S^{-1}(\Lambda^2_{(3)})\Lambda^1_{(9)})\\
	&\lambda S(S^{-2}(\Lambda^1_{(3)})S^{-1}(f^{[1]}_p)S^{-3}(Q)S^{-4}(u^{-1})S^{-4}(uS(d^{[1]}))S^{-4}(\Lambda^{1}_{(6)})S^{-3}(\Lambda^2_{(4)})S^{-2}(\Lambda^1_{(8)})\\
	&S^{-1}(\Lambda^2_{(2)})S^{-2}(\Lambda^1_{(1)})S^{-1}(f^{[3]}_p)S^{-3}(Q)S^{-4}(u^{-1})S^{-4}(uS(d^{[3]}))S^{-4}(\Lambda^1_{(4)})S^{-2}(\Lambda^2_{(6)})\Lambda^2_{(9)})\\
	=&\lambda S (S^{-1}(\Lambda^2_{(1)})S^{-1}(\Lambda^2_{(7)})S^{-3}(\Lambda^1_{(5)})S^{-2}(d^{[2]}_q)S^{-2}(Q)f^{[2]}_pS^{-1}(\Lambda^1_{(2)})S^{-1}(\Lambda^2_{(8)})S^{-3}(\Lambda^2_{(5)})\\
	&S^{-2}(\Lambda^1_{(7)})S^{-1}(\Lambda^2_{(3)})\Lambda^1_{(9)})\\
	&\lambda S(S^{-2}(\Lambda^1_{(3)})S^{-1}(f^{[1]}_p)S^{-3}(Q)S^{-3}(d^{[1]}_q)S^{-4}(\Lambda^{1}_{(6)})S^{-3}(\Lambda^2_{(4)})S^{-2}(\Lambda^1_{(8)})S^{-1}(\Lambda^2_{(2)})\\
	&S^{-2}(\Lambda^1_{(1)})S^{-1}(f^{[3]}_p)S^{-3}(Q)S^{-3}(d^{[3]}_q)S^{-4}(\Lambda^1_{(4)})S^{-2}(\Lambda^2_{(6)})\Lambda^2_{(9)})
\end{aligned}
\]

 By Proposition \ref{cocycle}(9), we have that

\[
\begin{aligned}
Z_F=&\lambda S (S^{-1}(\Lambda^2_{(1)})S^{-1}(\Lambda^2_{(7)})S^{-3}(\Lambda^1_{(5)})S^{-2}(Q_{(2)})S^{-1}(\Lambda^1_{(2)})S^{-1}(\Lambda^2_{(8)})S^{-3}(\Lambda^2_{(5)})S^{-2}(\Lambda^1_{(7)})\\
&S^{-1}(\Lambda^2_{(3)})\Lambda^1_{(9)})\\
&\lambda S(S^{-2}(\Lambda^1_{(3)})S^{-3}(Q_{(1)})S^{-4}(\Lambda^{1}_{(6)})S^{-3}(\Lambda^2_{(4)})S^{-2}(\Lambda^1_{(8)})S^{-1}(\Lambda^2_{(2)})S^{-2}(\Lambda^1_{(1)})S^{-3}(Q_{(3)})\\
&S^{-4}(\Lambda^1_{(4)})S^{-2}(\Lambda^2_{(6)})\Lambda^2_{(9)})
\end{aligned}\]
\[\begin{aligned}
=&\lambda S (S^{-1}(\Lambda^2_{(1)})S^{-1}(\Lambda^2_{(7)})S^{-3}(\Lambda^1_{(5)})S^{-2}(u_{(2)})S^{-1}(u^{-1}_{(2)})S^{-1}(\Lambda^1_{(2)})S^{-1}(\Lambda^2_{(8)})S^{-3}(\Lambda^2_{(5)})\\
&S^{-2}(\Lambda^1_{(7)})S^{-1}(\Lambda^2_{(3)})\Lambda^1_{(9)})\\
&\lambda S(S^{-2}(\Lambda^1_{(3)})S^{-2}(u^{-1}_{(3)})S^{-3}(u_{(1)})S^{-4}(\Lambda^{1}_{(6)})S^{-3}(\Lambda^2_{(4)})S^{-2}(\Lambda^1_{(8)})S^{-1}(\Lambda^2_{(2)})S^{-2}(\Lambda^1_{(1)})\\
&S^{-2}(u^{-1}_{(1)})S^{-3}(u_{(3)})S^{-4}(\Lambda^1_{(4)})S^{-2}(\Lambda^2_{(6)})\Lambda^2_{(9)})\\	
=&\lambda S (S^{-1}(\Lambda^2_{(1)})S^{-1}(\Lambda^2_{(7)})S^{-3}(\Lambda^1_{(5)})S^{-2}(u_{(2)})S^{-1}(d^{[2]}_{j(2)})d^{[1]}_{j(2)}S^{-1}(\Lambda^1_{(2)})S^{-1}(\Lambda^2_{(8)})S^{-3}(\Lambda^2_{(5)})\\
&S^{-2}(\Lambda^1_{(7)})S^{-1}(\Lambda^2_{(3)})\Lambda^1_{(9)})\\
&\lambda S(S^{-2}(\Lambda^1_{(3)})S^{-1}(d^{[1]}_{j(1)})S^{-2}(d^{[2]}_{j(3)})S^{-3}(u_{(1)})S^{-4}(\Lambda^{1}_{(6)})S^{-3}(\Lambda^2_{(4)})S^{-2}(\Lambda^1_{(8)})S^{-1}(\Lambda^2_{(2)})\\
&S^{-2}(\Lambda^1_{(1)})S^{-1}(d^{[1]}_{j(3)})S^{-2}(d^{[2]}_{j(1)})S^{-3}(u_{(3)})S^{-4}(\Lambda^1_{(4)})S^{-2}(\Lambda^2_{(6)})\Lambda^2_{(9)})\\
\end{aligned}\]
By Lemma \ref{lem:3}, we have 
\[\begin{aligned}
Z_F=&\lambda S (S^{-1}(\Lambda^2_{(1)})S^{-1}(\Lambda^2_{(7)})S^{-3}(\Lambda^1_{(5)})S^{-2}(d^{[1]}_{j(2)})S^{-2}(u_{(2)})S^{-1}(d^{[2]}_{j(2)})S^{-1}(\Lambda^1_{(2)})S^{-1}(\Lambda^2_{(8)})\\
&S^{-3}(\Lambda^2_{(5)})S^{-2}(\Lambda^1_{(7)})S^{-1}(\Lambda^2_{(3)})\Lambda^1_{(9)})\\
&\lambda S(S^{-2}(\Lambda^1_{(3)})S^{-2}(d^{[2]}_{j(3)})S^{-3}(u_{(1)})S^{-3}(d^{[1]}_{j(1)})S^{-4}(\Lambda^{1}_{(6)})S^{-3}(\Lambda^2_{(4)})S^{-2}(\Lambda^1_{(8)})S^{-1}(\Lambda^2_{(2)})\\
&S^{-2}(\Lambda^1_{(1)})S^{-2}(d^{[2]}_{j(1)})S^{-3}(u_{(3)})S^{-3}(d^{[1]}_{j(3)})S^{-4}(\Lambda^1_{(4)})S^{-2}(\Lambda^2_{(6)})\Lambda^2_{(9)})\\
=&\lambda S (S^{-1}(\Lambda^2_{(1)})S^{-1}(\Lambda^2_{(7)})S^{-3}(\Lambda^1_{(5)})S^{-2}(d^{[1]}_{j(2)}u_{(2)}S(d^{[2]}_j)_{(2)})S^{-1}(\Lambda^1_{(2)})S^{-1}(\Lambda^2_{(8)})S^{-3}(\Lambda^2_{(5)})\\
&S^{-2}(\Lambda^1_{(7)})S^{-1}(\Lambda^2_{(3)})\Lambda^1_{(9)})\\
&\lambda S(S^{-2}(\Lambda^1_{(3)})S^{-3}(d^{[1]}_{j(1)}u_{(1)}S(d^{[2]}_j)_{(1)})S^{-4}(\Lambda^{1}_{(6)})S^{-3}(\Lambda^2_{(4)})S^{-2}(\Lambda^1_{(8)})S^{-1}(\Lambda^2_{(2)})S^{-2}(\Lambda^1_{(1)})\\
&S^{-3}(d^{[1]}_{j(3)}u_{(3)}S(d^{[2]}_j)_{(3)})S^{-4}(\Lambda^1_{(4)})S^{-2}(\Lambda^2_{(6)})\Lambda^2_{(9)})\\
=&\lambda S (S^{-1}(\Lambda^2_{(1)})S^{-1}(\Lambda^2_{(7)})S^{-3}(\Lambda^1_{(5)})S^{-1}(\Lambda^1_{(2)})S^{-1}(\Lambda^2_{(8)})S^{-3}(\Lambda^2_{(5)})S^{-2}(\Lambda^1_{(7)})S^{-1}(\Lambda^2_{(3)})\Lambda^1_{(9)})\\
&\lambda S(S^{-2}(\Lambda^1_{(3)})S^{-4}(\Lambda^{1}_{(6)})S^{-3}(\Lambda^2_{(4)})S^{-2}(\Lambda^1_{(8)})S^{-1}(\Lambda^2_{(2)})S^{-2}(\Lambda^1_{(1)})S^{-4}(\Lambda^1_{(4)})S^{-2}(\Lambda^2_{(6)})\Lambda^2_{(9)})\\
=&Z(H)\,.
\end{aligned}
\]
This completes the proof.
\end{proof}

In general, for any framing $f$ on the Weeks manifold $W$, its Kuperberg invariant $Z(W, f, H)$ is related to $Z(W, f_0, H)$ in the following sense. By Proposition 2.2 in \cite{Kup96}, the framings of the Weeks manifold $W$ are determined by their spin structures in $H^1(W, \mathbb{Z}/2\mathbb{Z})$ and degrees in $H^3(W, \mathbb{Z})=\mathbb{Z}$. The presentation of the fundamental group of $W$,
$\pi_1(W)\cong\langle a, b~|~a^2b^2a^{-1}ba^{-1}b^2 = ab^2a^2b^{-1}ab^{-1}a = 1\rangle$ \cite{CK20}, gives $H_1(W, \mathbb{Z})=0$. By the universal coefficient theorem, we have $H^1(W, \mathbb{Z}/2\mathbb{Z})=0$. This means that the framings of $W$ differ in their degrees. When the degree changes by $+1$, the invariant $Z(W, f, H)$ gains a factor of $\alpha(g)$ \cite{Kup96}. As $\alpha(g)$ is a gauge invariant \cite{Shi15}, we can conclude that for any framing $f$ on the Weeks manifold, the Kuperberg invariant $Z(W, f, H)$ is a gauge invariant. In summary, we have obtained the following main theorem of this paper.

\begin{theorem}\label{weeksthm}
Let $H$ a finite-dimensional Hopf algebra. For any framing f on the Weeks manifold $W$, the Kuperberg invariant $Z(W, f, H)$ is a gauge
invariant of H. \qed
\end{theorem}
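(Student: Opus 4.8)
The plan is to reduce everything to the single framing $f_0$ drawn in Figure \ref{framed_weeks}, for which gauge invariance of $Z(W,f_0,H)$ has just been established, and to show that passing to an arbitrary framing only introduces a correction factor that is itself a gauge invariant. The first ingredient is Kuperberg's classification of framings: by \cite[Prop.\ 2.2]{Kup96} the framings of a closed oriented $3$-manifold $M$ are organized by the spin structure they induce, an element of $H^1(M;\mathbb{Z}/2\mathbb{Z})$, together with a degree in $H^3(M;\mathbb{Z})\cong\mathbb{Z}$. So the first step is to pin down these two invariants for the Weeks manifold $W$. From the presentation $\pi_1(W)\cong\langle a,b \mid a^2b^2a^{-1}ba^{-1}b^2 = ab^2a^2b^{-1}ab^{-1}a = 1\rangle$ of \cite{CK20}, abelianizing the relators shows that $H_1(W;\mathbb{Z})$ carries no $2$-torsion, whence the universal coefficient theorem forces $H^1(W;\mathbb{Z}/2\mathbb{Z})=0$; and $H^3(W;\mathbb{Z})\cong\mathbb{Z}$ since $W$ is closed and oriented. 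Consequently any framing $f$ on $W$ is obtained from $f_0$ purely by changing the degree.

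The second step is to control how $Z(W,-,H)$ depends on the degree. By \cite{Kup96}, increasing the degree by $1$ multiplies the Kuperberg invariant by $\alpha(g)$, where $\alpha\in H^*$ and $g\in H$ are the distinguished grouplike elements. Iterating, for any framing $f$ there is an integer $d$ with $Z(W,f,H)=\alpha(g)^{\,d}\,Z(W,f_0,H)$.

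Finally one concludes: $\alpha(g)$ is a gauge invariant of $H$ by \cite{Shi15}, and $Z(W,f_0,H)$ is a gauge invariant by the Proposition above; since an integer power of a gauge invariant times a gauge invariant is again a gauge invariant, $Z(W,f,H)$ is a gauge invariant for every framing $f$, which is the assertion of the theorem. The real work is entirely contained in the Proposition --- the lengthy cocycle computation showing $Z_F=Z(H)$ --- and the only delicate point in the present argument is the homological input: it is essential that $H_1(W;\mathbb{Z})$ has no $2$-torsion, for otherwise $W$ would admit several spin structures and one would have to compare $Z(W,f_0,H)$ with the invariants of the remaining spin framings, a comparison that does not follow formally from the degree-$1$ case.
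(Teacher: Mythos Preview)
Your proposal is correct and follows essentially the same route as the paper: reduce to the fixed framing $f_0$ via Kuperberg's classification of framings by spin structure and degree, compute $H^1(W;\mathbb{Z}/2\mathbb{Z})=0$ from the presentation of $\pi_1(W)$, and absorb the degree shift into powers of the gauge invariant $\alpha(g)$. Your phrasing that $H_1(W;\mathbb{Z})$ ``carries no $2$-torsion'' is in fact more accurate than the paper's claim that $H_1(W;\mathbb{Z})=0$ (abelianizing the relators gives $\mathbb{Z}/5\oplus\mathbb{Z}/5$), though both yield the needed vanishing of $H^1(W;\mathbb{Z}/2\mathbb{Z})$.
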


\section{3-torus}

The other gauge invariant we will study is the Kuperberg invariant of 3-torus $T^3$. It is the closed 3-manifold obtained by gluing opposite faces of a cube. It means that the Heegaard surface of $T^3$ is a genus three closed surface obtained from a 6-punctured 2-sphere by attaching three handles \cite{GS99}. This is the minimal genus Heegaard splitting of $T^3$.   

\begin{figure}\centering
\begin{tikzpicture}[scale=0.4, decoration={markings,
	mark=between positions 0 and 0.9 step 10mm with {\draw[->] (0.1, 0)--(0.2, 0);}}]
\draw [line width=0.3mm] (0, 12) circle(1);
\draw [line width=0.3mm] (13, 12) circle(1);
\draw [line width=0.3mm] (0, 0) circle(1);
\draw [line width=0.3mm] (13, 0) circle(1);
\draw [line width=0.3mm] (0, -12) circle(1);
\draw [line width=0.3mm] (13, -12) circle(1);
\draw [red, line width=0.2mm] (-1, 12) to [out=180, in=-90] (-2, 14) to [out=90, in=180] (13, 19) to [out=0, in=90] (20, 0) to [out=-90, in=0] (13, -16) to [out=180, in=-90] (8, -12) to [out=90, in=-90] (16, 0) to [out=90, in=0] (13, 3) to [out=180, in=90] (9.5, 0) to [out=-90, in=90] (2.5, -12) to [out=-90, in=0] (0, -15) to [out=180, in=-90] (-4, -12) to [out=90, in=-90] (6.5, 0) to [out=90, in=-45] (14, 12);
\draw [blue, line width=0.2mm] (-1, 0) to [out=150, in=-90] (8, 12) to [out=90, in=180] (12, 16) to [out=0, in=90] (17, 0) to [out=-90, in=90] (10, -12) to [out=-90, in=180] (13, -15) to [out=0, in=-90] (19, 0) to [out=90, in=0] (12.5, 18) to [out=180, in=90] (5, 12) to [out=-90, in=90] (-5, 0) to [out=-90, in=180] (0, -17) to [out=0, in=-90] (4, -12) to [out=90, in=-45] (14, 0);
\draw [black, line width=0.2mm] (-1, -12) to [out=150, in=-90] (8, 0) to [out=90, in=-90] (16, 11.5) to [out=90, in=0] (12, 15) to [out=180, in=90] (9.5, 12) to [out=-90, in=90] (5, 0) to [out=-90, in=0] (0, -3) to [out=180, in=-90] (-4, 0) to [out=90, in=-90] (6.5, 12) to [out=90, in=180] (12.5, 17) to [out=0, in=90] (18, 0) to [out=-90, in=30] (14, -12);
\draw [teal, line width=0.2mm] (1, 12) to (4, 12) to [out=0, in=180] (5, 11.5)  to [out=0, in=180] (6, 12) to (12, 12);
\draw [violet, line width=0.2mm] (1, 0) to (4, 0) to [out=0, in=180] (5, -0.5)  to [out=0, in=180] (6, 0) to (12, 0);
\draw [orange, line width=0.2mm] (1, -12) to (7, -12) to [out=0, in=180] (8, -12.5)  to [out=0, in=180] (9, -12) to (12, -12);
\draw [dashed, line width=0.2mm, postaction={decorate}] (6, -12) to (0, -12);
\draw [dashed, line width=0.2mm, postaction={decorate}] (6, -12) to (8, -12);
\draw [dashed, line width=0.2mm, postaction={decorate}] (13, -12) to (8, -12);
\draw [dashed, line width=0.2mm, postaction={decorate}] (3, 0) to (0, 0);
\draw [dashed, line width=0.2mm, postaction={decorate}] (3, 0) to (5, 0);
\draw [dashed, line width=0.2mm, postaction={decorate}] (13, 0) to (5, 0);
\draw [dashed, line width=0.2mm, postaction={decorate}] (-7, 7.5-12) to [out=0, in=-90] (3, 0);	
\draw [dashed, line width=0.2mm, postaction={decorate}] (-7, 0) to (0, 0);	
\draw [dashed, line width=0.2mm, postaction={decorate}] (-7, -3) to (-2, -3) to [out=0, in=-90] (1, -1) to [out=90, in=-30] (0, 0);	
\draw [dashed, line width=0.2mm, postaction={decorate}] (5, 0) to [out=-90, in=180] (21, 7.5-12);	
\draw [dashed, line width=0.2mm, postaction={decorate}] (13, 0) to (21, 0);	
\draw [dashed, line width=0.2mm, postaction={decorate}] (13, 0) to [out=-150, in=90] (11, -2) to [out=-90, in=180] (13, -3) to (21, -3);	
\draw [dashed, line width=0.2mm, postaction={decorate}] (-7, -6) to (21, -6);	
\draw [dashed, line width=0.2mm, postaction={decorate}] (6, -18) to (6, -12);	
\draw [dashed, line width=0.2mm, postaction={decorate}] (-7, 4.5-12) to [out=0, in=90] (6, -12);	
\draw [dashed, line width=0.2mm, postaction={decorate}] (5, -18) to [out=90, in=-30] (0, -12);	
\draw [dashed, line width=0.2mm, postaction={decorate}] (-7, -18) to (0, -12);	
\draw [dashed, line width=0.2mm, postaction={decorate}] (-7, -12) to (0, -12);	
\draw [dashed, line width=0.2mm, postaction={decorate}] (-7, -9) to (-2, -9) to [out=0, in=90] (1, -11) to [out=-90, in=30] (0, -12);	
\draw [dashed, line width=0.2mm, postaction={decorate}] (8, -12) to (8, -18);	
\draw [dashed, line width=0.2mm, postaction={decorate}] (8, -12) to [out=90, in=180] (21, 4.5-12);	
\draw [dashed, line width=0.2mm, postaction={decorate}] (13, -12) to [out=-150, in=90] (10, -18);	
\draw [dashed, line width=0.2mm, postaction={decorate}]  (13, -12) to (21, -18);	
\draw [dashed, line width=0.2mm, postaction={decorate}] (13, -12) to (21, -12);	
\draw [dashed, line width=0.2mm, postaction={decorate}] (13, -12) to [out=120, in=-90] (11, -10) to [out=90, in=180] (13, -9) to (21, -9);	
\draw [dashed, line width=0.2mm, postaction={decorate}] (0.5, -5) to [out=0, in=180] (4, -4) to [out=0, in=180] (7.5, -5);
\draw [dashed, line width=0.2mm, postaction={decorate}] (3.5, -7) to [out=0, in=180] (7, -8) to [out=0, in=180] (10.5, -7);
\draw [dashed, line width=0.2mm, postaction={decorate}] (6.3, -15) to [out=90, in=180] (7, -14) to [out=0, in=90] (7.7, -15);
\draw [dashed, line width=0.2mm, postaction={decorate}] (3, 12) to (0, 12);
\draw [dashed, line width=0.2mm, postaction={decorate}] (3, 12) to (5, 12);
\draw [dashed, line width=0.2mm, postaction={decorate}] (13, 12) to (5, 12);
\draw [dashed, line width=0.2mm, postaction={decorate}] (3, 19) to (3, 12);	
\draw [dashed, line width=0.2mm, postaction={decorate}] (-7, 7.5) to [out=0, in=-90] (3, 12);	
\draw [dashed, line width=0.2mm, postaction={decorate}] (2, 19) to [out=-90, in=60] (0, 12);	
\draw [dashed, line width=0.2mm, postaction={decorate}] (-7, 19) to (0, 12);	
\draw [dashed, line width=0.2mm, postaction={decorate}] (-7, 12) to (0, 12);	
\draw [dashed, line width=0.2mm, postaction={decorate}] (-7, 9) to (-2, 9) to [out=0, in=-90] (1, 11) to [out=90, in=-30] (0, 12);	
\draw [dashed, line width=0.2mm, postaction={decorate}] (5, 12) to (5, 19);	
\draw [dashed, line width=0.2mm, postaction={decorate}] (5, 12) to [out=-90, in=180] (21, 7.5);	
\draw [dashed, line width=0.2mm, postaction={decorate}] (13, 12) to [out=150, in=-90] (6, 19);	
\draw [dashed, line width=0.2mm, postaction={decorate}] (13, 12) to (21, 19);	
\draw [dashed, line width=0.2mm, postaction={decorate}] (13, 12) to (21, 12);	
\draw [dashed, line width=0.2mm, postaction={decorate}] (13, 12) to [out=-150, in=90] (11, 10) to [out=-90, in=180] (13, 9) to (21, 9);	
\draw [dashed, line width=0.2mm, postaction={decorate}] (-7, 6) to (21, 6);	
\draw [dashed, line width=0.2mm, postaction={decorate}] (-7, 4.5) to [out=0, in=90] (3, 0);	
\draw [dashed, line width=0.2mm, postaction={decorate}] (-7, 0) to (0, 0);	
\draw [dashed, line width=0.2mm, postaction={decorate}] (-7, 3) to (-2, 3) to [out=0, in=90] (1, 1) to [out=-90, in=30] (0, 0);	
\draw [dashed, line width=0.2mm, postaction={decorate}] (5, 0) to [out=90, in=180] (21, 4.5);	
\draw [dashed, line width=0.2mm, postaction={decorate}] (13, 0) to (21, 0);	
\draw [dashed, line width=0.2mm, postaction={decorate}] (13, 0) to [out=120, in=-90] (11, 2) to [out=90, in=180] (13, 3) to (21, 3);	
\draw [dashed, line width=0.2mm, postaction={decorate}] (3.3, 15) to [out=-90, in=180] (4, 14) to [out=0, in=-90] (4.7, 15);
\draw [dashed, line width=0.2mm, postaction={decorate}] (0.5, 7) to [out=0, in=180] (4, 8) to [out=0, in=180] (7.5, 7);
\draw [dashed, line width=0.2mm, postaction={decorate}] (0.5, 5) to [out=0, in=180] (4, 4) to [out=0, in=180] (7.5, 5);
\fill [white] (0, 12) circle(1);
\fill [white] (13, 12) circle(1);
\fill [white] (0, 0) circle(1);
\fill [white] (13, 0) circle(1);
\fill [white] (0, -12) circle(1);
\fill [white] (13, -12) circle(1);
\draw [decorate, decoration=zigzag] (3, 12) to [out=60, in=120] (5, 12);%twist front
\draw (3, 12.2) to [out=60, in=120] (5, 12.2);
\draw [decorate, decoration=zigzag] (3, 0) to [out=60, in=120] (5, 0);%twist front
\draw (3, 0.2) to [out=60, in=120] (5, 0.2);
\draw [decorate, decoration=zigzag] (6, -12) to [out=45, in=135] (8, -12);%twist front
\draw (6,0.2-12) to [out=45, in=135] (8, 0.2-12);
\node at (5.5, 12-0.8) {$p_1$};
\node at (7, 12-0.5) {$p_2$};
\node at (8.5, 12-0.5) {$p_3$};
\node at (10, 12-0.5) {$p_4$};
\node at (5.5, -0.8) {$q_1$};
\node at (7, -0.5) {$q_2$};
\node at (8.5, -0.5) {$q_3$};
\node at (10, -0.5) {$q_4$};
\node at (8.5, -0.8-12) {$r_1$};
\node at (10.5, -0.5-12) {$r_2$};
\node at (3, -0.5-12) {$r_3$};
\node at (4.5, -0.5-12) {$r_4$};
\end{tikzpicture}
\caption{}
\label{T3}
\end{figure}

Figure \ref{T3} is a framed Heegaard diagram for 3-torus $T^3$. The horizontal green, violet and orange lines are lower curve $\eta_1$, $\eta_2$ and $\eta_3$ respectively. The blue, black and red arcs give upper curve $\mu_1$, $\mu_2$ and $\mu_3$ respectively. All the Heegaard circles do not intersect with the twist fronts except at the base points, and the total rotations are: $\theta_{\eta_i}=\phi_{\eta_i}=\frac{1}{2}$ and $\theta_{\mu_j}=-\phi_{\mu_j}=-\frac{1}{2}$ for $i, j=1, 2, 3$. The rotation and the power of antipode $S$ are recorded in the following tables.

\[
\begin{tabular}{|c|c|c|c|c|c|c|c|c|c|c|c|c|} 
\hline 
& ~$p_1$~  & ~$p_2$~ & ~$p_3$~ & ~$p_4$~ & ~$q_1$~  & ~$q_2$~ & ~$q_3$~ & ~$q_4$~ & ~$r_1$~  & ~$r_2$~ & ~$r_3$~ & ~$r_4$~ \tabularnewline
\hline 
$\theta_\eta$ & ~$\frac{1}{4}$~  & ~$\frac{1}{2}$~ & ~$\frac{1}{2}$~ & ~$\frac{1}{2}$~  & ~$\frac{1}{4}$~ & ~$\frac{1}{2}$~ & ~$\frac{1}{2}$~ & ~$\frac{1}{2}$~ & ~$\frac{1}{4}$~  & ~$\frac{1}{2}$~ & ~$\frac{1}{2}$~ & ~$\frac{1}{2}$~  \tabularnewline
\hline 
$\theta_\mu$ & ~$0$~ & ~$-\tfrac{1}{4}$~ & ~$-\tfrac{1}{4}$~ & ~$\tfrac{1}{4}$~ & ~$0$~ & ~$\tfrac{1}{4}$~ & ~$-\tfrac{1}{4}$~ & ~$-\tfrac{1}{4}$~ & ~$0$~  & ~$-\tfrac{3}{4}$~ & ~$-\tfrac{1}{4}$~ & ~$-\tfrac{1}{4}$~ \tabularnewline
\hline
& ~$S$~ & ~$S^2$~ & ~$S^2$~ & ~$S$~ & ~$S$~ & ~$S$~ & ~$S^2$~ & ~$S^2$~ & ~$S$~  & ~$S^3$~ & ~$S^2$~ & ~$S^2$~ \tabularnewline
\hline
\end{tabular}
\]

The order of intersection points on $\mu_1$ is: $p_1$, $r_4$, $p_3$ and $r_2$. The order on $\mu_2$ is: $q_1$, $p_2$, $q_3$ and $p_4$. And the order on $\mu_3$ is: $r_1$, $q_2$, $r_3$ and $q_4$. The Kuperberg invariant of the 3-torus $T^3$ with this framing $f_1$ is 
\[\begin{aligned}
Z(T^3, f_1, H)=& \lambda S^{-1}(S(\Lambda^1_{(1)})S^2(\Lambda^3_{(4)})S^2(\Lambda^1_{(3)})S^3(\Lambda^3_{(2)}))\lambda S^{-1}(S(\Lambda^2_{(1)})S^2(\Lambda^1_{(2)})S^2(\Lambda^2_{(3)})S(\Lambda^1_{(4)}))\\
&\lambda S^{-1}(S(\Lambda^3_{(1)})S(\Lambda^2_{(2)})S^2(\Lambda^3_{(3)})S^2(\Lambda^2_{(4)}))\\
=& \lambda(S^2(\Lambda^3_{(2)})S(\Lambda^1_{(3)})S(\Lambda^3_{(4)})\Lambda^1_{(1)})\lambda(\Lambda^1_{(4)}S(\Lambda^2_{(3)})S(\Lambda^1_{(2)})\Lambda^2_{(1)})\\
&\lambda(S(\Lambda^2_{(4)})S(\Lambda^3_{(3)})\Lambda^2_{(2)}\Lambda^3_{(1)})
\end{aligned}\]
This invariant is the trace of certain linear map on $H^{\otimes3}$
\[\begin{aligned}
Z(T^3, f_1, H)=&\text{Tr}((S\otimes S \otimes S)\circ P_3) \\
=& \lambda^1S(\Lambda^3_{(1)}S^{-1}(\Lambda^1_{(2)})S^{-1}(\Lambda^3_{(3)})\Lambda^1_{(4)})\lambda^2S(S^{-2}(\Lambda^1_{(3)})S^{-1}(\Lambda^2_{(2)})S^{-1}(\Lambda^1_{(1)})\Lambda^2_{(4)})\\
&\lambda^3S(S^{-1}(\Lambda^2_{(3)})S^{-1}(\Lambda^3_{(2)})S^{-2}(\Lambda^2_{(1)})\Lambda^3_{(4)}),
\end{aligned}\]
where $P_3\in \mathrm{H^{\otimes3}}$ is
\[\begin{aligned}
P_3(x\otimes y\otimes z)=&z_{(1)}S^{-1}(x_{(2)})S^{-1}(z_{(3)})\otimes S^{-2}(x_{(3)})S^{-1}(y_{(2)})S^{-1}(x_{(1)})\\
&\otimes S^{-1}(y_{(3)})S^{-1}(z_{(2)})S^{-2}(y_{(1)})
\end{aligned}\]

\begin{theorem}\label{torusthm}
	Let $H$ be a finite-dimensional Hopf algebra. For the framing of 3-torus shown in Figure \ref{T3}, the Kuperberg invariant
	\[\begin{aligned}
		Z(T^3, f_1, H)=& \lambda(S^2(\Lambda^3_{(2)})S(\Lambda^1_{(3)})S(\Lambda^3_{(4)})\Lambda^1_{(1)})\lambda(\Lambda^1_{(4)}S(\Lambda^2_{(3)})S(\Lambda^1_{(2)})\Lambda^2_{(1)})\lambda(S(\Lambda^2_{(4)})S(\Lambda^3_{(3)})\Lambda^2_{(2)}\Lambda^3_{(1)})
	\end{aligned}\]
	is a gauge invariant of $H$.
\end{theorem}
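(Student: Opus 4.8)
The plan is to follow, step for step, the argument that establishes Theorem \ref{weeksthm}. Starting from the ``integral form'' of the invariant already displayed,
\[
Z(T^3,f_1,H)=\mathrm{Tr}\big((S\otimes S\otimes S)\circ P_3\big),
\]
I would first expand this trace on $H^{\otimes 3}$ by Theorem \ref{integral}(2), using that $\Lambda^1\otimes\Lambda^2\otimes\Lambda^3$ and $\lambda\otimes\lambda\otimes\lambda$ are normalized integrals of $H^{\otimes 3}$ and of its dual. This presents $Z(T^3,f_1,H)$ as a product of three factors $\lambda S(\cdots)\,\lambda S(\cdots)\,\lambda S(\cdots)$, one attached to each upper curve $\mu_1,\mu_2,\mu_3$, and the three factors are cyclically symmetric: the factor of $\mu_i$ involves only the coproduct components of $\Lambda^{i-1}$ and $\Lambda^{i}$ (indices mod $3$), with each $\Lambda^i$ contributing two of its four coproduct factors to $\mu_{i-1}$ and the other two to $\mu_i$. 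Only the antipode powers $S^{-1}$ and $S^{-2}$ occur, which is slightly more economical than the genus-two situation.

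Next I would compute $Z_F:=Z(T^3,f_1,H_F)$ for an arbitrary normalized $2$-cocycle $F$. Since $H_F$ has the same underlying algebra as $H$, the element $\Lambda^1\otimes\Lambda^2\otimes\Lambda^3$ is still an integral, and $Z_F=\mathrm{Tr}\big((S_F\otimes S_F\otimes S_F)\circ P_{3F}\big)$, where $P_{3F}$ is $P_3$ with $S$ and $\Delta$ replaced throughout by $S_F$ and $\Delta_F$. Applying Proposition \ref{cocycle}(9) to $H^{\otimes 3}$ equipped with the normalized $2$-cocycle induced by $F$ rewrites each of the three factors of $Z_F$ as a $\lambda S(\cdots)$-expression involving the cocycle tensors $f^{[\bullet]}$ and $d^{[\bullet]}$ — there are three independent families, one for each tensor slot of $H^{\otimes 3}$, which split into $H$-level families when the three outer copies of $\lambda\circ S$ are distributed over the factors. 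One then substitutes the explicit formulas $S^{-1}_F(x)=S^{-1}(u)S^{-1}(x)S^{-1}(u^{-1})$ and $S^{-2}_F(x)=S^{-1}(u)S^{-2}(u^{-1})S^{-2}(x)S^{-2}(u)S^{-1}(u^{-1})$, exactly as in the proof of Theorem \ref{weeksthm}.

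The heart of the proof is to collapse $Z_F$ back to $Z(T^3,f_1,H)$. The toolkit is identical to the one assembled for the Weeks manifold: Theorem \ref{integral}(1) and (3) move grouplike-type factors and cocycle elements between the coproduct components $\Lambda^i_{(k)}$; Proposition \ref{cocycle}(3)--(5) collapse products of consecutive $f$'s or $d$'s to $1$; Proposition \ref{cocycle}(6)--(8) resolve $\Delta^n(u^{\pm1})$ and $\Delta^n(Q)$; and a closing use of $Q=uS(u^{-1})$ together with Proposition \ref{cocycle}(9) annihilates the residual $Q$-terms. Lemma \ref{lemma2} applies verbatim, since it holds for every finite-dimensional Hopf algebra. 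What I still need to prove are the three-factor analogues of Lemma \ref{lemma1} and Lemma \ref{lem:3} adapted to the cyclic labelling pattern of Figure \ref{T3}; these are again obtained by iterating Theorem \ref{integral}(1) and (3), and, because the coproduct degrees and antipode powers are all smaller here, they are somewhat lighter than their genus-two counterparts.

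I expect the main obstacle to be purely combinatorial bookkeeping: one must order the simplification moves so that every cocycle element $f^{[k]},d^{[k]}$ and every $u,u^{-1},Q$ introduced by the passage $H\rightsquigarrow H_F$ is eventually cancelled in pairs, and in particular one must choose carefully on which $\Lambda^i_{(k)}$ to ``park'' each surviving $u^{-1}$ so that the identity of Proposition \ref{cocycle}(9) can be invoked at the very end to kill the leftover $Q$'s. Once the correct sequence of moves is found — and here the cyclic symmetry of the three upper curves is a useful guide — no idea beyond those already used for the Weeks manifold is needed, and one reads off $Z_F=Z(T^3,f_1,H)$, which proves the theorem.
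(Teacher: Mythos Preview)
Your overall strategy---write $Z_F$ via the trace formula of Proposition~\ref{cocycle}(9), substitute the explicit expressions for $S_F^{-1}$, and collapse the cocycle factors using Theorem~\ref{integral}(1),(3) together with Proposition~\ref{cocycle}(3)--(7)---is exactly the paper's framework. Where you diverge is in the anticipated difficulty: the paper's $3$-torus computation is considerably lighter than you expect, and in particular it never needs Lemma~\ref{lemma1}, Lemma~\ref{lemma2}, Lemma~\ref{lem:3}, the element $Q$, or Proposition~\ref{cocycle}(8). No three-factor analogues of Lemma~\ref{lemma1} or Lemma~\ref{lem:3} are required. Instead, the paper first merges the three separate cocycle families (indexed $j,q,t$) into a single $F_n^{-1}$ by repeated use of Theorem~\ref{integral}(1) and Proposition~\ref{cocycle}(1); then cancels adjacent factors of the form $d^{[k]}uS(d^{[k+1]})$ via Proposition~\ref{cocycle}(5); and finally disposes of the two remaining copies of $u$ and $u^{-1}$ by expanding them as $f^{[1]}S(f^{[2]})$ and $S(d^{[1]})d^{[2]}$ and absorbing them with Theorem~\ref{integral}(1),(3). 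The cyclic symmetry you noticed, together with the fact that only $S^{-1}$ and $S^{-2}$ appear, makes the bookkeeping genuinely easier than in the Weeks case---your plan would succeed, but you would be carrying more machinery than the problem demands.
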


\begin{proof}
By Proposition \ref{cocycle}(10), the Kuperberg invariant based on the twist Hopf algebra $H_F$ is given by 
\[\begin{aligned}
Z_F:=&Z(T^3, f_1, H_F)=\text{Tr}((S_F\otimes S_F \otimes S_F)\circ P_{3F}) \\
=& \lambda S(d^{[1]}_jf^{[1]}_r\Lambda^3_{(1)}d^{[2]}_tS_F^{-1}(f^{[2]}_i\Lambda^1_{(2)}d^{[3]}_j)S_F^{-1}(f^{[3]}_r\Lambda^3_{(3)}d^{[4]}_t)f^{[4]}_i\Lambda^1_{(4)})\\
&\lambda S(d^{[1]}_qS_F^{-2}(f^{[3]}_i\Lambda^1_{(3)}d^{[4]}_j)S_F^{-1}(f^{[2]}_p\Lambda^2_{(2)}d^{[3]}_q)S_F^{-1}(f^{[1]}_i\Lambda^1_{(1)}d^{[2]}_j)f^{[2]}_p\Lambda^2_{(4)})\\
&\lambda S(d^{[1]}_tS_F^{-1}(f^{[3]}_p\Lambda^2_{(3)}d^{[4]}_q)S_F^{-1}(f^{[2]}_r\Lambda^3_{(2)}d^{[3]}_t)S_F^{-2}(f^{[1]}_p\Lambda^2_{(1)}d^{[2]}_q)f^{[4]}_r\Lambda^3_{(4)}).
\end{aligned}\]

By Proposition \ref{cocycle}(3) and (4), we have that
\[\begin{aligned}
	Z_F
	=&\lambda S(uS(d^{[3]}_t)\Lambda^3_{(1)}S(f^{[3]}_r)u^{-1}f^{[2]}_iS^{-1}(\Lambda^1_{(2)})d^{[2]}_jf^{[1]}_rS^{-1}(\Lambda^3_{(3)})d^{[1]}_t\Lambda^1_{(4)})\\
	&\lambda S(S_F^{-1}(f^{[1]}_iS^{-1}(\Lambda^1_{(3)})d^{[1]}_j)f^{[2]}_pS^{-1}(\Lambda^2_{(2)})d^{[2]}_qf^{[3]}_iS^{-1}(\Lambda^1_{(1)})d^{[3]}_j\Lambda^2_{(4)})\\
	&\lambda S(f^{[1]}_pS^{-1}(\Lambda^2_{(3)})d^{[1]}_qf^{[2]}_rS^{-1}(\Lambda^3_{(2)})d^{[2]}_tS_F^{-1}(f^{[3]}_pS^{-1}(\Lambda^2_{(1)})d^{[3]}_q)\Lambda^3_{(4)}).
\end{aligned}\]

Apply Theorem \ref{integral}(1) for $d^{[1]}_t\Lambda^1_{(4)}$ and $d^{[3]}_j\Lambda^2_{(4)}$.

\[\begin{aligned}
Z_F
=&\lambda S(uS(d^{[3]}_t)\Lambda^3_{(1)}S(f^{[3]}_r)u^{-1}f^{[2]}_iS^{-1}(\Lambda^1_{(2)})d^{[1]}_{t(2)}d^{[2]}_jf^{[1]}_rS^{-1}(\Lambda^3_{(3)})\Lambda^1_{(4)})\\
&\lambda S(S_F^{-1}(f^{[1]}_iS^{-1}(\Lambda^1_{(3)})d^{[1]}_{t(1)}d^{[1]}_j)f^{[2]}_pS^{-1}(\Lambda^2_{(2)})d^{[2]}_qf^{[3]}_iS^{-1}(\Lambda^1_{(1)})d^{[1]}_{t(3)}d^{[3]}_j\Lambda^2_{(4)})\\
&\lambda S(f^{[1]}_pS^{-1}(\Lambda^2_{(3)})d^{[1]}_qf^{[2]}_rS^{-1}(\Lambda^3_{(2)})d^{[2]}_tS_F^{-1}(f^{[3]}_pS^{-1}(\Lambda^2_{(1)})d^{[3]}_q)\Lambda^3_{(4)})\\
=&\lambda S(uS(d^{[5]}_t)\Lambda^3_{(1)}S(f^{[3]}_r)u^{-1}f^{[2]}_iS^{-1}(\Lambda^1_{(2)})d^{[2]}_tf^{[1]}_rS^{-1}(\Lambda^3_{(3)})\Lambda^1_{(4)})\\
&\lambda S(S_F^{-1}(f^{[1]}_iS^{-1}(\Lambda^1_{(3)})d^{[1]}_t)f^{[2]}_pS^{-1}(\Lambda^2_{(2)})d^{[2]}_qf^{[3]}_iS^{-1}(\Lambda^1_{(1)})d^{[3]}_t\Lambda^2_{(4)})\\
&\lambda S(f^{[1]}_pS^{-1}(\Lambda^2_{(3)})d^{[1]}_qf^{[2]}_rS^{-1}(\Lambda^3_{(2)})d^{[4]}_tS_F^{-1}(f^{[3]}_pS^{-1}(\Lambda^2_{(1)})d^{[3]}_q)\Lambda^3_{(4)})\\
=&\lambda S(uS(d^{[5]}_t)\Lambda^3_{(1)}S(f^{[3]}_r)u^{-1}f^{[2]}_iS^{-1}(\Lambda^1_{(2)})d^{[2]}_tf^{[1]}_rS^{-1}(\Lambda^3_{(3)})\Lambda^1_{(4)})\\
&\lambda S(S_F^{-1}(f^{[1]}_iS^{-1}(\Lambda^1_{(3)})d^{[1]}_t)f^{[2]}_pS^{-1}(\Lambda^2_{(2)})d^{[3]}_{t(2)}d^{[2]}_qf^{[3]}_iS^{-1}(\Lambda^1_{(1)})\Lambda^2_{(4)})\\
&\lambda S(f^{[1]}_pS^{-1}(\Lambda^2_{(3)})d^{[3]}_{t(1)}d^{[1]}_qf^{[2]}_rS^{-1}(\Lambda^3_{(2)})d^{[4]}_tS_F^{-1}(f^{[3]}_pS^{-1}(\Lambda^2_{(1)})d^{[3]}_{t(3)}d^{[3]}_q)\Lambda^3_{(4)})\\
=&\lambda S(uS(d^{[7]}_t)\Lambda^3_{(1)}S(f^{[3]}_r)u^{-1}f^{[2]}_iS^{-1}(\Lambda^1_{(2)})d^{[2]}_tf^{[1]}_rS^{-1}(\Lambda^3_{(3)})\Lambda^1_{(4)})\\
&\lambda S(S_F^{-1}(f^{[1]}_iS^{-1}(\Lambda^1_{(3)})d^{[1]}_t)f^{[2]}_pS^{-1}(\Lambda^2_{(2)})d^{[4]}_tf^{[3]}_iS^{-1}(\Lambda^1_{(1)})\Lambda^2_{(4)})\\
&\lambda S(f^{[1]}_pS^{-1}(\Lambda^2_{(3)})d^{[3]}_tf^{[2]}_rS^{-1}(\Lambda^3_{(2)})d^{[6]}_tS_F^{-1}(f^{[3]}_pS^{-1}(\Lambda^2_{(1)})d^{[5]}_t)\Lambda^3_{(4)})
\end{aligned}\]
Plug in $S_F^{-1}(x)=S^{-1}(u)S^{-1}(x)S^{-1}(u^{-1})$.
\[\begin{aligned}
Z_F=&\lambda S(uS(d^{[7]}_t)\Lambda^3_{(1)}S(f^{[3]}_r)u^{-1}f^{[2]}_iS^{-1}(\Lambda^1_{(2)})d^{[2]}_tf^{[1]}_rS^{-1}(\Lambda^3_{(3)})\Lambda^1_{(4)})\\
&\lambda S(S^{-1}(u)S^{-1}(d^{[1]}_t)S^{-2}(\Lambda^1_{(3)})S^{-1}(f^{[1]}_i)S^{-1}(u^{-1})f^{[2]}_pS^{-1}(\Lambda^2_{(2)})d^{[4]}_tf^{[3]}_iS^{-1}(\Lambda^1_{(1)})\Lambda^2_{(4)})\\
&\lambda S(f^{[1]}_pS^{-1}(\Lambda^2_{(3)})d^{[3]}_tf^{[2]}_rS^{-1}(\Lambda^3_{(2)})d^{[6]}_tS^{-1}(u)S^{-1}(d^{[5]}_t)S^{-2}(\Lambda^2_{(1)})S^{-1}(f^{[3]}_p)S^{-1}(u^{-1})\Lambda^3_{(4)})\\
=&\lambda S(uS(d^{[5]}_t)\Lambda^3_{(1)}S(f^{[3]}_r)u^{-1}f^{[2]}_iS^{-1}(\Lambda^1_{(2)})d^{[2]}_tf^{[1]}_rS^{-1}(\Lambda^3_{(3)})\Lambda^1_{(4)})\\
&\lambda S(S^{-1}(u)S^{-1}(d^{[1]}_t)S^{-2}(\Lambda^1_{(3)})S^{-1}(f^{[1]}_i)S^{-1}(u^{-1})f^{[2]}_pS^{-1}(\Lambda^2_{(2)})d^{[4]}_tf^{[3]}_iS^{-1}(\Lambda^1_{(1)})\Lambda^2_{(4)})\\
&\lambda S(f^{[1]}_pS^{-1}(\Lambda^2_{(3)})d^{[3]}_tf^{[2]}_rS^{-1}(\Lambda^3_{(2)})S^{-2}(\Lambda^2_{(1)})S^{-1}(f^{[3]}_p)S^{-1}(u^{-1})\Lambda^3_{(4)})
\end{aligned}\]

In the last equality, $d^{[5]}_tuS(d^{[6]}_t)$ is compressed by Proposition \ref{cocycle}(6). Then we apply Theorem \ref{integral}(3) for $\lambda S(u\cdots\Lambda^1_{(4)})$ and get the following by Proposition \ref{cocycle}(7).

\[\begin{aligned}
Z_F=&\lambda S(S(d^{[5]}_t)\Lambda^3_{(1)}S(f^{[3]}_r)u^{-1}f^{[2]}_iu_{(2)}S^{-1}(\Lambda^1_{(2)})d^{[2]}_tf^{[1]}_rS^{-1}(\Lambda^3_{(3)})\Lambda^1_{(4)})\\
&\lambda S(S^{-1}(u)S^{-1}(d^{[1]}_t)S^{-2}(\Lambda^1_{(3)})S^{-1}(u_{(1)})S^{-1}(f^{[1]}_i)S^{-1}(u^{-1})f^{[2]}_pS^{-1}(\Lambda^2_{(2)})d^{[4]}_tf^{[3]}_iu_{(3)}\\
&S^{-1}(\Lambda^1_{(1)})\Lambda^2_{(4)})\\
&\lambda S(f^{[1]}_pS^{-1}(\Lambda^2_{(3)})d^{[3]}_tf^{[2]}_rS^{-1}(\Lambda^3_{(2)})S^{-2}(\Lambda^2_{(1)})S^{-1}(f^{[3]}_p)S^{-1}(u^{-1})\Lambda^3_{(4)})\\
=&\lambda S(S(d^{[5]}_t)\Lambda^3_{(1)}S(f^{[3]}_r)S(d^{[2]}_i)S^{-1}(\Lambda^1_{(2)})d^{[2]}_tf^{[1]}_rS^{-1}(\Lambda^3_{(3)})\Lambda^1_{(4)})\\
&\lambda S(S^{-1}(u)S^{-1}(d^{[1]}_t)S^{-2}(\Lambda^1_{(3)})d^{[3]}_if^{[2]}_pS^{-1}(\Lambda^2_{(2)})d^{[4]}_tuS(d^{[1]}_i)S^{-1}(\Lambda^1_{(1)})\Lambda^2_{(4)})\\
&\lambda S(f^{[1]}_pS^{-1}(\Lambda^2_{(3)})d^{[3]}_tf^{[2]}_rS^{-1}(\Lambda^3_{(2)})S^{-2}(\Lambda^2_{(1)})S^{-1}(f^{[3]}_p)S^{-1}(u^{-1})\Lambda^3_{(4)})
\end{aligned}\]

Apply Theorem \ref{integral}(3) for $\lambda S(S(d^{[5]}_t)\cdots\Lambda^1_{(4)})$ and $\lambda S(f^{[1]}_p\cdots\Lambda^3_{(4)})$. 

\[\begin{aligned}
Z_F=&\lambda S(\Lambda^3_{(1)}S(f^{[3]}_r)S(d^{[2]}_i)S(d^{[5]}_{t(2)})S^{-1}(\Lambda^1_{(2)})d^{[2]}_tf^{[1]}_rS^{-1}(\Lambda^3_{(3)})\Lambda^1_{(4)})\\
&\lambda S(S^{-1}(u)S^{-1}(d^{[1]}_t)S^{-2}(\Lambda^1_{(3)})d^{[5]}_{t(3)}d^{[3]}_if^{[2]}_pS^{-1}(\Lambda^2_{(2)})d^{[4]}_tuS(d^{[1]}_i)S(d^{[5]}_{t(1)})S^{-1}(\Lambda^1_{(1)})\Lambda^2_{(4)})\\
&\lambda S(f^{[1]}_pS^{-1}(\Lambda^2_{(3)})d^{[3]}_tf^{[2]}_rS^{-1}(\Lambda^3_{(2)})S^{-2}(\Lambda^2_{(1)})S^{-1}(f^{[3]}_p)S^{-1}(u^{-1})\Lambda^3_{(4)})\\
=&\lambda S(\Lambda^3_{(1)}S(f^{[3]}_r)S(d^{[6]}_t)S^{-1}(\Lambda^1_{(2)})d^{[2]}_tf^{[1]}_rS^{-1}(\Lambda^3_{(3)})\Lambda^1_{(4)})\\
&\lambda S(S^{-1}(u)S^{-1}(d^{[1]}_t)S^{-2}(\Lambda^1_{(3)})d^{[7]}_tf^{[2]}_pS^{-1}(\Lambda^2_{(2)})d^{[4]}_tuS(d^{[5]}_t)S^{-1}(\Lambda^1_{(1)})\Lambda^2_{(4)})\\
&\lambda S(f^{[1]}_pS^{-1}(\Lambda^2_{(3)})d^{[3]}_tf^{[2]}_rS^{-1}(\Lambda^3_{(2)})S^{-2}(\Lambda^2_{(1)})S^{-1}(f^{[3]}_p)S^{-1}(u^{-1})\Lambda^3_{(4)})\\
=&\lambda S(\Lambda^3_{(1)}S(f^{[3]}_r)S(d^{[4]}_t)S^{-1}(\Lambda^1_{(2)})d^{[2]}_tf^{[1]}_rS^{-1}(\Lambda^3_{(3)})\Lambda^1_{(4)})\\
&\lambda S(S^{-1}(u)S^{-1}(d^{[1]}_t)S^{-2}(\Lambda^1_{(3)})d^{[5]}_tf^{[2]}_pS^{-1}(\Lambda^2_{(2)})S^{-1}(\Lambda^1_{(1)})\Lambda^2_{(4)})\\
&\lambda S(f^{[1]}_pS^{-1}(\Lambda^2_{(3)})d^{[3]}_tf^{[2]}_rS^{-1}(\Lambda^3_{(2)})S^{-2}(\Lambda^2_{(1)})S^{-1}(f^{[3]}_p)S^{-1}(u^{-1})\Lambda^3_{(4)})\\
=&\lambda S(\Lambda^3_{(1)}S(f^{[1]}_{p(3)})S(f^{[3]}_r)S(d^{[4]}_t)S^{-1}(\Lambda^1_{(2)})d^{[2]}_tf^{[1]}_rf^{[1]}_{p(1)}S^{-1}(\Lambda^3_{(3)})\Lambda^1_{(4)})\\
&\lambda S(S^{-1}(u)S^{-1}(d^{[1]}_t)S^{-2}(\Lambda^1_{(3)})d^{[5]}_tf^{[2]}_pS^{-1}(\Lambda^2_{(2)})S^{-1}(\Lambda^1_{(1)})\Lambda^2_{(4)})\\
&\lambda S(S^{-1}(\Lambda^2_{(3)})d^{[3]}_tf^{[2]}_rf^{[1]}_{p(2)}S^{-1}(\Lambda^3_{(2)})S^{-2}(\Lambda^2_{(1)})S^{-1}(f^{[3]}_p)S^{-1}(u^{-1})\Lambda^3_{(4)})\\
=&\lambda S(\Lambda^3_{(1)}S(f^{[3]}_p)S(d^{[4]}_t)S^{-1}(\Lambda^1_{(2)})d^{[2]}_tf^{[1]}_pS^{-1}(\Lambda^3_{(3)})\Lambda^1_{(4)})\\
&\lambda S(S^{-1}(u)S^{-1}(d^{[1]}_t)S^{-2}(\Lambda^1_{(3)})d^{[5]}_tf^{[4]}_pS^{-1}(\Lambda^2_{(2)})S^{-1}(\Lambda^1_{(1)})\Lambda^2_{(4)})\\
&\lambda S(S^{-1}(\Lambda^2_{(3)})d^{[3]}_tf^{[2]}_pS^{-1}(\Lambda^3_{(2)})S^{-2}(\Lambda^2_{(1)})S^{-1}(f^{[5]}_p)S^{-1}(u^{-1})\Lambda^3_{(4)})
\end{aligned}\]

Plug $u=f^{[1]}_jS(f^{[2]}_j)$ into $\lambda S(S^{-1}(u)\cdots)$ and apply Theorem \ref{integral}(3) for $\lambda S(f^{[2]}_j\cdots\Lambda^2_{(4)})$.

\[\begin{aligned}
Z_F=&\lambda S(\Lambda^3_{(1)}S(f^{[3]}_p)S(d^{[4]}_t)S^{-1}(\Lambda^1_{(2)})d^{[2]}_tf^{[1]}_pS^{-1}(\Lambda^3_{(3)})\Lambda^1_{(4)})\\
&\lambda S(S^{-1}(f^{[1]}_j)S^{-1}(d^{[1]}_t)S^{-2}(\Lambda^1_{(3)})d^{[5]}_tf^{[4]}_pf^{[2]}_{j(2)}S^{-1}(\Lambda^2_{(2)})S^{-1}(\Lambda^1_{(1)})\Lambda^2_{(4)})\\
&\lambda S(f^{[2]}_{j(1)}S^{-1}(\Lambda^2_{(3)})d^{[3]}_tf^{[2]}_pS^{-1}(\Lambda^3_{(2)})S^{-2}(\Lambda^2_{(1)})S^{-1}(f^{[2]}_{j(3)})S^{-1}(f^{[5]}_p)S^{-1}(u^{-1})\Lambda^3_{(4)})\\
=&\lambda S(\Lambda^3_{(1)}S(f^{[2]}_{j(3)})S(f^{[3]}_p)S(d^{[4]}_t)S^{-1}(\Lambda^1_{(2)})d^{[2]}_tf^{[1]}_pf^{[2]}_{j(1)}S^{-1}(\Lambda^3_{(3)})\Lambda^1_{(4)})\\
&\lambda S(S^{-1}(f^{[1]}_j)S^{-1}(d^{[1]}_t)S^{-2}(\Lambda^1_{(3)})d^{[5]}_tf^{[4]}_pf^{[2]}_{j(4)}S^{-1}(\Lambda^2_{(2)})S^{-1}(\Lambda^1_{(1)})\Lambda^2_{(4)})\\
&\lambda S(S^{-1}(\Lambda^2_{(3)})d^{[3]}_tf^{[2]}_pf^{[2]}_{j(2)}S^{-1}(\Lambda^3_{(2)})S^{-2}(\Lambda^2_{(1)})S^{-1}(f^{[2]}_{j(5)})S^{-1}(f^{[5]}_p)S^{-1}(u^{-1})\Lambda^3_{(4)})\\
=&\lambda S(\Lambda^3_{(1)}S(f^{[4]}_j)S(d^{[4]}_t)S^{-1}(\Lambda^1_{(2)})d^{[2]}_tf^{[2]}_jS^{-1}(\Lambda^3_{(3)})\Lambda^1_{(4)})\\
&\lambda S(S^{-1}(f^{[1]}_j)S^{-1}(d^{[1]}_t)S^{-2}(\Lambda^1_{(3)})d^{[5]}_tf^{[5]}_jS^{-1}(\Lambda^2_{(2)})S^{-1}(\Lambda^1_{(1)})\Lambda^2_{(4)})\\
&\lambda S(S^{-1}(\Lambda^2_{(3)})d^{[3]}_tf^{[3]}_jS^{-1}(\Lambda^3_{(2)})S^{-2}(\Lambda^2_{(1)})S^{-1}(f^{[6]}_j)S^{-1}(u^{-1})\Lambda^3_{(4)})\\
\end{aligned}\]

Similarly, plug $u^{-1}=S(d^{[1]}_q)d^{[2]}_q$ in $\lambda S(\cdots S^{-1}(u)\Lambda^3_{(4)})$ and apply Theorem \ref{integral}(1) to $\lambda d^{[1]}_q\Lambda^3_{(4)}$.

\[\begin{aligned}
Z_F=&\lambda S(S(d^{[1]}_{q(3)})\Lambda^3_{(1)}S(f^{[4]}_j)S(d^{[4]}_t)S^{-1}(\Lambda^1_{(2)})d^{[2]}_tf^{[2]}_jS^{-1}(\Lambda^3_{(3)})d^{[1]}_{q(1)}\Lambda^1_{(4)})\\
&\lambda S(S^{-1}(f^{[1]}_j)S^{-1}(d^{[1]}_t)S^{-2}(\Lambda^1_{(3)})d^{[5]}_tf^{[5]}_jS^{-1}(\Lambda^2_{(2)})S^{-1}(\Lambda^1_{(1)})\Lambda^2_{(4)})\\
&\lambda S(S^{-1}(\Lambda^2_{(3)})d^{[3]}_tf^{[3]}_jS^{-1}(\Lambda^3_{(2)})d^{[1]}_{q(2)}S^{-2}(\Lambda^2_{(1)})S^{-1}(f^{[6]}_j)S^{-1}(d^{[2]}_q)\Lambda^3_{(4)})\\
=&\lambda S(S(d^{[1]}_{q(5)})\Lambda^3_{(1)}S(f^{[4]}_j)S(d^{[4]}_t)S^{-1}(\Lambda^1_{(2)})d^{[1]}_{q(2)}d^{[2]}_tf^{[2]}_jS^{-1}(\Lambda^3_{(3)})\Lambda^1_{(4)})\\
&\lambda S(S^{-1}(f^{[1]}_j)S^{-1}(d^{[1]}_t)S^{-1}(d^{[1]}_{q(1)})S^{-2}(\Lambda^1_{(3)})d^{[5]}_tf^{[5]}_jS^{-1}(\Lambda^2_{(2)})S^{-1}(\Lambda^1_{(1)})d^{[1]}_{q(3)}\Lambda^2_{(4)})\\
&\lambda S(S^{-1}(\Lambda^2_{(3)})d^{[3]}_tf^{[3]}_jS^{-1}(\Lambda^3_{(2)})d^{[1]}_{q(4)}S^{-2}(\Lambda^2_{(1)})S^{-1}(f^{[6]}_j)S^{-1}(d^{[2]}_q)\Lambda^3_{(4)})\\
=&\lambda S(S(d^{[1]}_{q(7)})\Lambda^3_{(1)}S(f^{[4]}_j)S(d^{[4]}_t)S^{-1}(\Lambda^1_{(2)})d^{[1]}_{q(2)}d^{[2]}_tf^{[2]}_jS^{-1}(\Lambda^3_{(3)})\Lambda^1_{(4)})\\
&\lambda S(S^{-1}(f^{[1]}_j)S^{-1}(d^{[1]}_t)S^{-1}(d^{[1]}_{q(1)})S^{-2}(\Lambda^1_{(3)})d^{[5]}_tf^{[5]}_jS^{-1}(\Lambda^2_{(2)})d^{[1]}_{q(4)}S^{-1}(\Lambda^1_{(1)})\Lambda^2_{(4)})\\
&\lambda S(S^{-1}(\Lambda^2_{(3)})d^{[1]}_{q(3)}d^{[3]}_tf^{[3]}_jS^{-1}(\Lambda^3_{(2)})d^{[1]}_{q(6)}S^{-1}(d^{[1]}_{q(5)})S^{-2}(\Lambda^2_{(1)})S^{-1}(f^{[6]}_j)S^{-1}(d^{[2]}_q)\Lambda^3_{(4)})\\
=&\lambda S(S(d^{[1]}_{q(5)})\Lambda^3_{(1)}S(f^{[4]}_j)S(d^{[4]}_t)S^{-1}(\Lambda^1_{(2)})d^{[1]}_{q(2)}d^{[2]}_tf^{[2]}_jS^{-1}(\Lambda^3_{(3)})\Lambda^1_{(4)})\\
&\lambda S(S^{-1}(f^{[1]}_j)S^{-1}(d^{[1]}_t)S^{-1}(d^{[1]}_{q(1)})S^{-2}(\Lambda^1_{(3)})d^{[5]}_tf^{[5]}_jS^{-1}(\Lambda^2_{(2)})d^{[1]}_{q(4)}S^{-1}(\Lambda^1_{(1)})\Lambda^2_{(4)})\\
&\lambda S(S^{-1}(\Lambda^2_{(3)})d^{[1]}_{q(3)}d^{[3]}_tf^{[3]}_jS^{-1}(\Lambda^3_{(2)})S^{-2}(\Lambda^2_{(1)})S^{-1}(f^{[6]}_j)S^{-1}(d^{[2]}_q)\Lambda^3_{(4)})\\
=&\lambda S(\Lambda^3_{(1)}S(f^{[4]}_j)S(d^{[4]}_t)S(d^{[1]}_{q(6)})S^{-1}(\Lambda^1_{(2)})d^{[1]}_{q(2)}d^{[2]}_tf^{[2]}_jS^{-1}(\Lambda^3_{(3)})\Lambda^1_{(4)})\\
&\lambda S(S^{-1}(f^{[1]}_j)S^{-1}(d^{[1]}_t)S^{-1}(d^{[1]}_{q(1)})S^{-2}(\Lambda^1_{(3)})d^{[1]}_{q(7)}d^{[5]}_tf^{[5]}_jS^{-1}(\Lambda^2_{(2)})d^{[1]}_{q(4)}S(d^{[1]}_{q(5)})S^{-1}(\Lambda^1_{(1)})\Lambda^2_{(4)})\\
&\lambda S(S^{-1}(\Lambda^2_{(3)})d^{[1]}_{q(3)}d^{[3]}_tf^{[3]}_jS^{-1}(\Lambda^3_{(2)})S^{-2}(\Lambda^2_{(1)})S^{-1}(f^{[6]}_j)S^{-1}(d^{[2]}_q)\Lambda^3_{(4)})\\
=&\lambda S(\Lambda^3_{(1)}S(f^{[4]}_j)S(d^{[4]}_t)S(d^{[1]}_{q(4)})S^{-1}(\Lambda^1_{(2)})d^{[1]}_{q(2)}d^{[2]}_tf^{[2]}_jS^{-1}(\Lambda^3_{(3)})\Lambda^1_{(4)})\\
&\lambda S(S^{-1}(f^{[1]}_j)S^{-1}(d^{[1]}_t)S^{-1}(d^{[1]}_{q(1)})S^{-2}(\Lambda^1_{(3)})d^{[1]}_{q(5)}d^{[5]}_tf^{[5]}_jS^{-1}(\Lambda^2_{(2)})S^{-1}(\Lambda^1_{(1)})\Lambda^2_{(4)})\\
&\lambda S(S^{-1}(\Lambda^2_{(3)})d^{[1]}_{q(3)}d^{[3]}_tf^{[3]}_jS^{-1}(\Lambda^3_{(2)})S^{-2}(\Lambda^2_{(1)})S^{-1}(f^{[6]}_j)S^{-1}(d^{[2]}_q)\Lambda^3_{(4)})\\
=&\lambda S(\Lambda^3_{(1)}S(f^{[4]}_j)S(d^{[4]}_q)S^{-1}(\Lambda^1_{(2)})d^{[2]}_qf^{[2]}_jS^{-1}(\Lambda^3_{(3)})\Lambda^1_{(4)})\\
&\lambda S(S^{-1}(f^{[1]}_j)S^{-1}(d^{[1]}_q)S^{-2}(\Lambda^1_{(3)})d^{[5]}_qf^{[5]}_jS^{-1}(\Lambda^2_{(2)})S^{-1}(\Lambda^1_{(1)})\Lambda^2_{(4)})\\
&\lambda S(S^{-1}(\Lambda^2_{(3)})d^{[3]}_qf^{[3]}_jS^{-1}(\Lambda^3_{(2)})S^{-2}(\Lambda^2_{(1)})S^{-1}(f^{[6]}_j)S^{-1}(d^{[6]}_q)\Lambda^3_{(4)})\\
=&\lambda S(\Lambda^3_{(1)}S^{-1}(\Lambda^1_{(2)})S^{-1}(\Lambda^3_{(3)})\Lambda^1_{(4)})\lambda S(S^{-2}(\Lambda^1_{(3)})S^{-1}(\Lambda^2_{(2)})S^{-1}(\Lambda^1_{(1)})\Lambda^2_{(4)})\\
&\lambda S(S^{-1}(\Lambda^2_{(3)})S^{-1}(\Lambda^3_{(2)})S^{-2}(\Lambda^2_{(1)})\Lambda^3_{(4)}) = Z(T^3, f_1, H)\,,
\end{aligned}\]
and this completes the proof.
\end{proof}

\bibliographystyle{alpha}
\bibliography{MyRef}

\end{document}